\def\?[#1]{\textbf{[#1]}\marginpar{\Large{\textbf{??}}}}
\newcommand{\RR}{{\mathbb R}}
\renewcommand{\Re}{\mathop{\rm Re}\nolimits}
\renewcommand{\Im}{\mathop{\rm Im}\nolimits}
\DeclareMathOperator{\Op}{Op}
\DeclareMathOperator{\supp}{supp}
\DeclareMathOperator{\Tr}{Tr}
\DeclareMathOperator{\Vol}{Vol}
\newtheorem{thm}{Theorem}
\newtheorem{prop}{Proposition}
\newtheorem{lem}[prop]{Lemma}
\newtheorem{cor}[prop]{Corollary}
\newtheorem{rem}{Remark}
\numberwithin{equation}{section}
\numberwithin{prop}{section}
\begin{document}
\title{Resonance-free Region in scattering by a strictly convex obstacle}
\author{Long Jin}
\email{jinlong@math.berkeley.edu}
\address{Department of Mathematics, Evans Hall, University of California,
Berkeley, CA 94720, USA}
\date{}
\maketitle

\begin{abstract}
We prove the existence of a resonance free region in scattering by a strictly convex obstacle $\mathcal{O}$ with the Robin boundary condition $\partial_\nu u+\gamma u|_{\partial\mathcal{O}}=0$. More precisely, we show that the scattering resonances lie below a cubic curve $\Im\zeta=-S|\zeta|^{\frac{1}{3}}+C$. The constant $S$ is the same as in the case of the Neumann boundary condition $\gamma=0$. This generalizes earlier results on cubic poles free regions \cite{BLR}, \cite{HL} ,\cite{SZ5} obtained for the Dirichlet boundary condition.
\end{abstract}

\section{Introduction and Statement of Results}
\label{int}

Let $\mathcal{O}$ be a bounded open set in $\RR^n$ with a smooth boundary. For $\gamma$, a real-valued smooth function on $\partial\mathcal{O}$,
we define  $P^{(\gamma)} $
to be the Laplacian on the exterior domain $-\Delta_{\mathbb{R}^n\setminus\mathcal{O}}$ realized with the Robin boundary condition $\partial_\nu u+\gamma u|_{\partial\mathcal{O}}=0$.
The operator $P^{(\gamma)}$ is a self-adjoint operator with continuous spectrum $[0,\infty)$. The resolvent
and $  R^{(\gamma)}(\zeta) :=(P^{(\gamma)}-\zeta^2)^{-1} \; : \; L^2 ( \RR^n \setminus \mathcal{O} )\to L^2 ( \RR^n \setminus \mathcal{O} )$
is holomorphic in $\Im\zeta>0$. The Green function, $ G^{(\gamma)} ( \zeta , x, y ) $ is defined as the
kernel of this resolvent:
\begin{gather*}
u ( x ) = \int_{ \RR^n \setminus \mathcal{O} } G^{(\gamma)} ( \zeta, x , y ) f ( y ) dy , \ \ f \in C^\infty_{\rm{c}} ( \RR^n
\setminus \mathcal{O} ),
\\
( - \Delta - \zeta^2 ) u ( x ) = f ( x ) , \ \ x \in  \RR^n \setminus \bar {\mathcal{O} } , \ \
\partial_\nu u ( x ) +\gamma ( x ) u ( x ) = 0 , \ \ x \in \partial \mathcal O.
\end{gather*}
The Green function has a meromorphic extension in $ \zeta $ to the whole complex plane if $n$ is odd, and the logarithmic covering space of $\mathbb{C}\setminus\{0\}$ if $n$ is even.

The scattering poles or resonances of $P^{(\gamma)}$ are defined as the poles of this meromorphic continuation and they enjoy interesting interpretations and properties. The real part of a resonance corresponds to a frequency of a resonant wave,
and the imaginary part, to the decay rate of the wave. Consequently understanding of the separation of resonances
from the real axis is related to the decay properties and long time behavior of waves.

\begin{figure}
\label{Fig:pfr}
\centering
\includegraphics[scale=0.7]{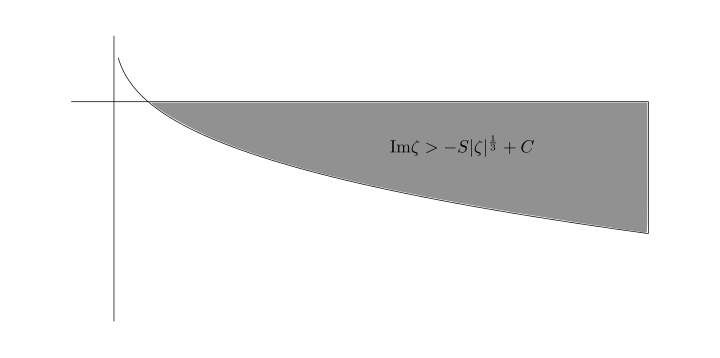}
\caption{The Resonance-Free Region}
\end{figure}

Resonance free regions near the real axis have been extensively studied since the work
of Lax-Phillips \cite{LP} and Vainberg \cite{V} where the presence of such regions
was linked to propagation of singularities for the wave equation, and hence to the
geometry of the obstacle $ {\mathcal O}$.  Thanks to the work of Melrose, Ivrii, Sj\"ostrand and Taylor
on propagation of singularities for boundary value problems, if $ {\mathcal O} $ is {\em nontrapping},
that is, all reflecting rays escape to infinity,  there are no resonances in the region
$ \Im \zeta > - M \log | \zeta | + C_M $ for any $ M $ -- see \cite[Chapter 24]{Ho},\cite{Me},\cite{TZ} and references given there.
When the boundary $ \partial \mathcal O $ is {\em real analytic}, and the obstacle is {\em nontrapping},
the work of Lebeau \cite{Le} on propagation of Gevrey 3 singularities implies that the resonance free
region is cubic, that is, there are no resonances in $ \Im \zeta > - C_0 | \zeta|^{1/3}  + C_1 $ for
some constants $ C_0,C_1 $ -- see Popov \cite{Po} and Bardos-Lebeau-Rauch \cite{BLR}.
 The above results have been typically stated in the case of the Dirichlet
boundary condition, as they depend only on propagation of singularities, they hold for $ P^{(\gamma)}$,
with $ \gamma $ analytic, for the cubic poles free region.

The case of stictly convex obstacles has been studied, first when $ {\mathcal O} $ is the sphere, since
the work of Watson \cite{W} on electromagnetic scattering by the earth almost a hundred years ago.
In that case, for the Dirichlet or Neumann problems scattering poles are given by zeros of Hankel
function or their derivatives, respectively -- see Stefanov \cite{St} for a modern account and references.
Since a convex obstacle is nontrapping the existence of a logarithmic resonance free region
follows from the general results \cite{Me}. It was first established, in the star shaped case, by Morawetz-Ralston-Strauss \cite{MRS}.

A remarkable discovery was made by Harg\'e-Lebeau \cite{HL} who showed that for a smooth
boundary and the Dirichlet boundary condition,
the resonance free region is cubic, that is, the same result is valid as for analytic
non-trapping obstacles. This result is sharp as was shown already in \cite{BLR} where the analysis
of Gevrey-3 singularities of the wave trace gave a string of of resonances near a cubic curve
-- see also \cite{SjC} and \cite[Example 4]{SZ3}. The argument of Harg\'e-Lebeau was based on the
complex scaling method of Aguilar-Combes \cite{AC} and Balslev-Combes \cite{BC} developed for boundary value problems by
Sj\"ostrand-Zworski \cite{SZ1},\cite{SZ4}.  In \cite{SZ5}
Sj\"ostrand-Zworski gave a more direct proof of the cubic resonance free region for smooth strictly  convex obstacles
and established polynomial bounds on the number of resonances in neighborhood of the real axis. In \cite{SZ6}
they proved an asymptotic law for the number of resonances in the cubic strips for obstacles whose boundary
satisfies a pinched curvature assumption.

In this paper, we prove that the results of  \cite{HL} and \cite{SZ5} on the cubic resonance free region
are valid for an arbitrary Robin boundary condition. The main result
which follows from Theorem \ref{lb:sc} below is
\begin{thm}
\label{polefree}
Suppose that $\mathcal{O}$ is strictly convex with a smooth boundary. Then there are no resonances of $P^{(\gamma)}$ in the cubic region
\begin{equation}
\Im\zeta>-S|\zeta|^{\frac{1}{3}}+C,
\end{equation}
for some $C>0$, depending on $\gamma$ and $\mathcal{O}$. The constant $S$ is given by
\begin{equation}
S=2^{-\frac{1}{3}}\cos\left(\frac{\pi}{6}\right)\zeta_1'\left(\min_{y\in\mathcal{O},i=1,\ldots,n-1}K_i(y)\right)^{\frac{2}{3}}
\end{equation}
with $K_i(y)$ the principal curvatures of $\partial\mathcal{O}$ at $y$, $-\zeta_1'$ the first zero of the derivative of the Airy function.
\end{thm}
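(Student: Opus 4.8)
The plan is to follow the direct $C^\infty$ approach of Sj\"ostrand--Zworski \cite{SZ5} rather than the analytic complex-scaling method of \cite{HL}, and to show that the Robin term enters the glancing analysis only as a relatively $O(h^{2/3})$ perturbation of the Neumann boundary condition, so that it cannot affect the leading constant $S$. The first step is to reduce Theorem~\ref{polefree} to the semiclassical a priori estimate of Theorem~\ref{lb:sc}: complex scaling outside a large ball (as developed for boundary value problems in \cite{SZ1},\cite{SZ4},\cite{SZ5}) converts a resonance in the cubic region into an eigenvalue of a non-self-adjoint operator $P^{(\gamma)}_\theta$ with a small fixed scaling angle $\theta>0$; writing $h=(\Re\zeta)^{-1}$, $\zeta=z/h$ with $z$ in a fixed complex neighbourhood of $1$ and $\Im z>-Sh^{2/3}+Ch$, it then suffices to prove
\begin{equation*}
\|u\|_{L^2}\le Ch^{-N}\Bigl(\bigl\|(h^2P^{(\gamma)}_\theta-z^2)u\bigr\|_{L^2}+h^{1/2}\bigl\|(h\partial_\nu+h\gamma)u\bigr\|_{H^{s}(\partial\mathcal O)}\Bigr)
\end{equation*}
uniformly for such $z$. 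Away from a fixed collar of $\partial\mathcal O$ this follows from the nontrapping resolvent bound \cite{Me} -- a strictly convex obstacle is nontrapping -- so the analysis is localized near $\partial\mathcal O$.

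In boundary normal coordinates $(y,x_n)$, $x_n\ge 0$ the distance to $\partial\mathcal O$, the scaled operator has the form $h^2D_{x_n}^2+Q(x_n,y;hD_y)-z^2$ with $Q(0,y;\eta)=|\eta|^2_{g(y)}$ and $\partial_{x_n}Q(0,y;\eta)$ proportional to the second fundamental form $\mathrm{II}_y(\eta^\sharp,\eta^\sharp)$, nonvanishing on the characteristic set by strict convexity. I split the boundary data microlocally into the elliptic region $|\eta|^2_g>\Re(z^2)+\delta$, the hyperbolic region $|\eta|^2_g<\Re(z^2)-\delta$, and the glancing region $\bigl||\eta|^2_g-\Re(z^2)\bigr|<2\delta$ for $\delta$ fixed small. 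In the elliptic region the admissible interior solution decays exponentially in $x_n/h$, the exterior Dirichlet-to-Neumann operator is semiclassically elliptic of $\Re$-positive type, and $h\partial_\nu+h\gamma$ is invertible there with an $O(1)$ bound, the $h\gamma$ term being lower order. In the hyperbolic region one uses the reflection law and propagation of singularities for the boundary problem; since the transversally reflected rays of a convex body escape to infinity after a single reflection, this reduces to a standard positive-commutator estimate (as in \cite{Me},\cite{SZ5}) with at most a polynomial loss in $h$, and again the Robin term perturbs the reflected-wave construction negligibly.

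The crux -- and the main obstacle -- is the glancing region, where, following \cite{SZ5}, I would set up a Grushin problem whose effective Hamiltonian is the boundary operator restricted to this zone. After the parabolic rescaling $x_n=(h^2/c(y,\eta))^{1/3}s$, with $c(y,\eta)\sim 2\,|\mathrm{II}_y(\eta^\sharp,\eta^\sharp)|$ ranging over $[2\min_{y,i}K_i(y),2\max_{y,i}K_i(y)]$ on the glancing set, and after conjugation by a tangential Fourier integral operator flattening the glancing bicharacteristics, the normal problem becomes to leading order the shifted Airy equation $(D_s^2-s)v=s_0v$ on $s\ge 0$, with $s_0=(z^2-|\eta|^2)\,c^{-2/3}h^{-2/3}$; strict convexity orients the model correctly, making $s\to+\infty$ the exterior, and the admissible outgoing solution is the rotation $v(s)=\mathrm{Ai}\bigl(e^{2\pi i/3}(-s-s_0)\bigr)$ of the Airy function selected by the complex scaling. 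The boundary condition $h\partial_\nu u+h\gamma u=0$ at $s=0$ becomes $h^{1/3}c^{1/3}\bigl(v'(0)+O(h^{2/3})\,\gamma\,v(0)\bigr)=0$, so the effective Hamiltonian is the Neumann model $v'(0)$ -- a nonzero multiple of $\mathrm{Ai}'\!\bigl(-e^{2\pi i/3}s_0\bigr)$ -- perturbed by an $O(h^{2/3})$ term that carries all of the $\gamma$-dependence; this is exactly why $S$ coincides with the Neumann constant and is independent of $\gamma$. The Neumann model degenerates precisely when $s_0$ lies in $\{e^{-2\pi i/3}\zeta_k':k\ge 1\}$, the rotated zeros of $\mathrm{Ai}'$; since $\Im(z^2)=2\Re z\,\Im z<0$ makes $s_0$ sweep a nearly horizontal segment in the lower half-plane as $|\eta|^2$ and $y$ vary, the obstruction closest to the real axis occurs when $s_0$ first meets $e^{-2\pi i/3}\zeta_1'$ in the flattest normal direction, and unwinding the $(h^2/c)^{1/3}$ rescaling, the phase of the rotated zero (which contributes the factor $\cos(\pi/6)=\sin(\pi/3)$), and the value $c=2\min_{y,i}K_i(y)$ produces precisely $S=2^{-1/3}\cos(\pi/6)\,\zeta_1'\bigl(\min_{y,i}K_i(y)\bigr)^{2/3}$.

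Staying a distance $\varepsilon h^{2/3}$ above this curve keeps $\mathrm{Ai}'\!\bigl(-e^{2\pi i/3}s_0\bigr)$ bounded away from zero, so the Neumann effective Hamiltonian is invertible with bound $O(1/\varepsilon)$ and the $O(h^{2/3})$ Robin perturbation is absorbed; patching the three microlocal regimes with a partition of unity on the boundary phase space -- the cutoff and conjugation errors either gaining positive powers of $h^{1/3}$ or being $O(h^\infty)$ -- yields the a priori estimate above, and hence Theorem~\ref{lb:sc} and Theorem~\ref{polefree}. The principal difficulty throughout is the uniform glancing analysis: one must verify, uniformly in $y\in\partial\mathcal O$, in the direction of $\eta$, and as $z$ approaches the cubic curve, both that the $O(h^{2/3})$ bound on the Robin contribution to the effective Hamiltonian is genuine and that it cannot shift the leading obstruction, while carrying along the full $(n-1)$-dimensional second fundamental form rather than a single transverse mode. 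As in \cite{SZ5}, this last point is handled by bounding the relevant transverse quadratic form below by $\min_{y,i}K_i(y)$, which is enough for the resonance-free region, whereas the sharper counting asymptotics of \cite{SZ6} require in addition a pinched-curvature hypothesis.
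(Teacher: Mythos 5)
Your proposal captures the central insight -- that after the parabolic rescaling $x_n \sim h^{2/3}$, the Robin term is a relatively $O(h^{2/3})$ perturbation of the Neumann boundary condition and therefore cannot shift the leading constant $S$ -- and your unwinding of the rescaling, the $\cos(\pi/6)$ phase of the rotated Airy zero, and the minimum of the transverse second fundamental form does reproduce the correct $S$. You arrive there, however, by a genuinely different technical route than the paper, and one of your framing choices is inaccurate in a way worth flagging.

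On the reduction step: the paper does \emph{not} scale only ``outside a large ball.'' Following \cite{SZ4},\cite{SZ5} and the appendix (Lemma \ref{detob}), it deforms the contour all the way to $\partial\mathcal O$ along the normal directions, so that near $t=0$ the scaled operator is literally $e^{-2\pi i/3}((hD_t)^2+2tQ)+R(y,\eta)+\dots$ with the Robin condition rotated to $e^{-\pi i/3}\partial_t u+\gamma u|_{t=0}=0$. The boundary condition is then built into the domain $D(\Gamma)$, and the main estimate (Theorem \ref{lb:sc}) is an unconditional lower bound $\|(h^2P-\omega_0)u\|\ge (r_0+2S(\Re\omega_0)^{2/3}h^{2/3}-O(h))\|u\|$ on such $u$, with a carefully chosen $\omega_0$ whose imaginary part $r_0$ is $O(1)$. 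There is no separate boundary term and no $h^{-N}$ loss. Your formulation, with an a priori estimate carrying a boundary trace term and a polynomial loss, is consistent with scaling only away from the boundary and selecting the outgoing branch through the model; it can still yield a resonance-free region (an eigenfunction annihilates both terms on the right), but it gives up the sharp quantitative form the paper obtains.

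On the glancing analysis: where you propose a Grushin problem with an effective boundary Hamiltonian after conjugation by a tangential FIO flattening the glancing set -- a parametrix-style argument closer in spirit to \cite{HL} or to the counting analysis of \cite{SZ6} -- the paper instead applies the Wunsch--Zworski FBI transform \cite{WZ} on $\partial\mathcal O$ to reduce $h^2P-\omega_0$ to a \emph{family} of ordinary differential operators in $t$ with $(y,\eta)\in T^*\partial\mathcal O$ as parameters, and then proves a direct quadratic-form lower bound for the Airy-type model with a general condition at $t=0$ (Section \ref{airy}). In that framework the Robin term shows up as the boundary terms $O(h^2)|D_tu(0)|^2+O(h^2)|u(0)|^2$ in \eqref{lowairy}, which are then absorbed using the semiclassical trace estimate (Proposition \ref{trace}); no effective Hamiltonian, no near-glancing FIO conjugation, and no invertibility of a boundary pseudodifferential operator need be established. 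This is what makes the paper's route ``direct'': it bypasses the parametrix construction entirely. Your approach would work, but the parametrix/Grushin step you defer to (``the principal difficulty is the uniform glancing analysis'') is precisely the step the paper avoids, and is substantially heavier to carry out uniformly in $(y,\eta)$ and in the spectral parameter than the FBI-plus-Airy-quadratic-form estimate.

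Two smaller points. First, your attribution of the Grushin set-up to \cite{SZ5} is off -- the effective-Hamiltonian/Grushin machinery belongs to \cite{SZ6}, whereas \cite{SZ5} and this paper use the direct quadratic-form route. Second, the paper chooses $\omega_0=\Re(h^2\zeta^2)+ir_0$ with a fixed $r_0>0$ rather than $z^2$ itself; this is what makes the final deduction in Section \ref{pf} a clean triangle-inequality argument rather than a delicate invertibility statement near the cubic curve, and it is one reason the argument closes without polynomial losses. Your sketch is correct in outline and would establish the theorem, but it is not the paper's proof, and it trades the paper's relatively soft quadratic-form estimate for a harder uniform parametrix construction in the glancing region.
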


\begin{rem}
The constant $S$ here is optimal if the obstacle $\mathcal{O}$ is a ball and $\gamma$ is a constant, since we can get explicit expressions for the resonances by Hankel functions -- see \cite{St}. When $ \gamma $ is not a constant function Theorem \ref{polefree} is new even in the case of the sphere.

In the case of Dirichlet or Neumann boundary condition, the optimal value for the constant $S$ is given in \cite{BLR} and \cite{SjC} when $\partial\mathcal{O}$ is analytic and its geodesics satisfy certain conditions. The optimal constant $S$ in other cases is still unknown.
\end{rem}

The basic strategy is similar to that in \cite{SZ5} but with some challenges provided by the more
general boundary condition. First, we write the operator in normal geodesic coordinates with respect to the boundary and apply the method of complex scaling all the way to the boundary. For this part, we shall follow the approach in \cite{SZ1} and \cite{SZ4} and reduce the problem of resonances free region to the problem of localizing the spectrum of a non-selfadjoint operator.
Then we shall use the global FBI transform on the boundary \cite{WZ} to change the complex-scaled Laplace operator to an Airy-type ordinary differential operator, at least near the boundary. In Section \ref{airy}, we study the model Airy-type ordinary differential operator with general boundary condition. In Section \ref{lb}, we will go back to the scaled operator and prove a certain lower bound
for it -- see Theorem \ref{lb:sc}. Finally, in Section \ref{pf}, we use that lower bound to show the existence of a resonance free region stated in the main theorem.
In the Appendix we review the complex scaling up the boundary \cite{SZ4} as additional care is needed when dealing with general boundary conditions.

\subsection*{\normalsize Acknowledgement}
I would like to thank Maciej Zworski for encouragement and  advice during the preparation of this paper.
Partial support by the National Science Foundation grant DMS-1201417 is also gratefully acknowledged.

\section{Preliminaries}
\label{pr}
\subsection{The complex scaling method}
\label{cs}
In this section, we reduce the problem of resonances to the spectrum of a non-selfadjoint operator by the complex scaling method. This method was first introduced by Aguilar-Combes \cite{AC} and Balslev-Combes \cite{BC} in studying the continuous spectrum of Schr\"{o}dinger operators and later proved to be a strong tool in the study of resonances. We shall follow the methods of Sj\"{o}strand and Zworski and apply it to the case of Robin boundary condition. For more details, see \cite{SZ1}, \cite{SZ4} and \cite{SZ5}.

Let $\mathcal{O}$ be a strictly convex, bounded open set in $\RR^n$ with smooth boundary, then $d(x)=\mbox{dist}(x,\mathcal{O})$ is a smooth convex function in $\RR^n\setminus\mathcal{O}$. Moreover, $d''(x)\geqslant0$ and $\dim\ker d''(x)=1$, generated by $x-y(x)$ where $y(x)$ is the closest point to $x$ on $\partial\mathcal{O}$, so that $d(x)=|x-y(x)|$. At $y(x)$, the exterior unit normal vector of $\partial\mathcal{O}$ is $\nu(y(x))=\nabla d(y(x))=\frac{x-y(x)}{|x-y(x)|}$.

Near every $x_0\in\partial\mathcal{O}$, we shall choose the normal geodesic coordinates $(x',x_n),x'\in U\subseteq\RR^{n-1},x_n\geqslant0$ for $\RR^n\setminus\mathcal{O}$ where $(U,x'=(x_1,\ldots,x_{n-1}))$ is a local coordinates for $\partial\mathcal{O}$ centered at $x_0$, $x_n=d(x)$. This normal geodesic coordinates is given by $x=s(x')+x_n\nu(s(x'))$ where $s:U\to\partial\mathcal{O}$ is the coordinate map for $\partial\mathcal{O}$. In the global version, the normal geodesic coordinates identify $\RR^n\setminus\mathcal{O}$ with $\partial\mathcal{O}\times[0,\infty)$ by
\begin{equation*}
\RR^n\setminus\mathcal{O}\ni x\leftrightarrow(y,x_n)\in\partial\mathcal{O}\times[0,\infty)
\end{equation*}
where $y=s(x')\in\partial\mathcal{O}$.

Under this coordinates, for $x_n$ small, we have the following expression for $-h^2\Delta$:
\begin{equation*}
-h^2\Delta=(hD_{x_n})^2+R(x',hD_{x'})-2x_nQ(x',hD_{x'})+O(x_n^2(hD_{x'})^2)+O(h)hD_x+O(h^2)
\end{equation*}
where we introduce the semiclassical parameter $h>0$ which we will later let tend to zero, $R$ and $Q$ are two quadratic forms which are dual to the first and second fundamental form, respectively. Thus the principal curvatures of $\partial\mathcal{O}$ at $y=s(x')$ are the eigenvalues of $Q$ with respect to $R$.

Now following \cite{SZ4} and \cite{SZ5} we introduce the family of complex scaling contours $(0<\theta<\theta_0)$
\begin{equation*}
\Gamma_\theta=\{z=x+i\theta f'(x):x\in\RR^n\setminus\mathcal{O}\}\subset\RR^n\setminus\mathcal{O}+i\RR^n
\end{equation*}
where $f:\RR^n\setminus\mathcal{O}\to\RR$ is a smooth function such that for $x$ near $\mathcal{O}$, $f(x)=\frac{1}{2}d(x)^2$, so $f'(x)=d(x)d'(x)$.

If we parametrize $\Gamma_\theta$ by $x\in\RR^n\setminus\mathcal{O}$, then we can compute the principal symbol of $-\Delta|_{\Gamma_\theta}$:
\begin{equation*}
p_\theta(x,\xi)=\langle(1+i\theta f''(x))^{-1})\xi,(1+i\theta f''(x))^{-1}\xi\rangle=a_\theta-ib_\theta
\end{equation*}
where
\begin{equation*}
a_\theta=\langle(1-(\theta f''(x))^2\tilde{\xi},\tilde{\xi}\rangle, \ \ b_\theta=2\theta\langle f''(x)\tilde{\xi},\tilde{\xi}\rangle. \ \
\tilde{\xi}=(1+(\theta f''(x))^2)^{-1}\xi .
\end{equation*}
In normal geodesic coordinates, we can write the contours $\Gamma_\theta$ as the image of
\begin{equation*}
U\times[0,\infty)\ni(x',x_n)\mapsto s(x')+(1+i\theta)x_n\nu(s(x'))\in\mathbb{C}^n ,
\end{equation*}
locally for $x_n$ small. In the global version, by rescaling $t=|(1+i\theta)|x_n$, the contours $\Gamma_\theta$ are the image of
\begin{equation*}
\partial\mathcal{O}\times[0,\infty)\ni(y,t)\mapsto y+g_\theta(t)\nu(y)\in\mathbb{C}^n ,
\end{equation*}
where $g_\theta:[0,\infty)\to\mathbb{C}$ is a smooth injective map such that $|g_\theta'|=1,g(0)=0,g(t)=t\frac{1+i\theta}{|1+i\theta|}$ for $t$ near 0; $g(t)=t\frac{1+i\varphi(\theta)}{|1+i\varphi(\theta)|}$ outside a small neighborhood of 0 and
\begin{equation*}
\arg(1+i\varphi(\theta))\leqslant\arg g(t)\leqslant\arg(1+i\theta), \ \ \ \frac{1}{2}\arg(1+i\varphi(\theta))\leqslant\arg g'(t)\leqslant(1+i\theta) . \end{equation*}
We shall choose $\varphi(\theta)$ small enough such that $\Gamma_\theta$ satisfy the conditions given in \cite{SZ4}. In particular, we shall work on the contour $\Gamma=\Gamma_\theta$ for $\theta=\theta_1$ such that $g=g_{\theta_1}(t)$ equals to $te^{\frac{\pi i}{3}}$ for $t$ near 0 (later on we shall use $t\in[0,L^{-1}]$ for $L$ large enough). On this contour,
\begin{equation*}
\begin{split}
-h^2\Delta|_\Gamma & =\frac{1}{g'(t)^2}(hD_t)^2+R(y,hD_y)-2g(t)Q(y,hD_y)
\\ & \ \ \ \ \ \ \ +O(t^2(hD_y)^2)+O(h)hD_{y,t}+O(h^2) ,
\end{split}
\end{equation*}
which is elliptic in both semiclassical sense and the usual sense.

For $t$ small such that $g(t)=te^{\frac{\pi i}{3}}$,
\begin{equation*}
\begin{split}
-h^2\Delta|_\Gamma& =e^{-\frac{2\pi i}{3}}((hD_t)^2+2tQ(y,hD_y))+R(y,hD_y)\\
& \ \ \ \ \ \ \ \ +O(t^2(hD_y)^2)+O(h)hD_{y,t}+O(h^2) .
\end{split}
\end{equation*}

Let $p$ be the principal symbol of $-h^2\Delta|_\Gamma$. We notice that from \cite{SZ4}, if $g=g_{\theta_1},\varphi=\varphi(\theta_1)$ are chosen suitably, then $p$ lies in the lower half plane and for every $\delta>0$, there exists $\epsilon>0$ such that
\begin{equation*}
t\geqslant\delta\Rightarrow \epsilon\leqslant-\arg p\leqslant\pi-\epsilon.
\end{equation*}

Now we consider the boundary condition. In the normal geodesic coordinate $(y,t)\in\partial\mathcal{O}\times[0,\infty)$, the Robin boundary condition becomes
\begin{equation*}
\partial_tu+\gamma u|_{t=0}=0
\end{equation*}
Therefore in the scaled operator, we shall choose the boundary condition
\begin{equation*}
e^{-\frac{\pi i}{3}}\partial_tu+\gamma u|_{t=0}=0
\end{equation*}
or
\begin{equation*}
\partial_\nu u+ku|_{\partial\Gamma}=0
\end{equation*}
where $k:\partial\Gamma\to\mathbb{C}$ is a smooth function.

We shall define the scaled operator $P=P^{(\gamma)}_\Gamma=-\Delta|_\Gamma:D(\Gamma)\to L^2(\Gamma)$ where $D(\Gamma)=\{u\in H^2(\Gamma)|\partial_\nu u+ku|_{\partial\Gamma}=0\}$ and regard $P$ as an operator on $\RR^n\setminus\mathcal{O}$ by the parametrization of $\Gamma$ given above. According to \cite{SZ1}, $P$ has discrete spectrum in the sector $e^{-i(0,2\varphi)}(0,+\infty)$. Moreover, we have

\begin{prop}
\label{re-sp}
The resonances of $P^{(\gamma)}$ in $-\varphi<\arg\zeta<0$ is the same as the square root of the eigenvalues of $P$ in $-2\varphi<\arg\zeta<0$.
\end{prop}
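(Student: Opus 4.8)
The plan is to follow the standard complex-scaling dictionary between resonances and eigenvalues of the scaled operator, adapted to the Robin boundary condition, and I expect the only real work to be bookkeeping the boundary term through the deformation argument. First I would fix $\zeta$ with $-\varphi<\arg\zeta<0$ and, writing $\lambda=\zeta^2$ so that $-2\varphi<\arg\lambda<0$, compare $R^{(\gamma)}(\zeta)=(P^{(\gamma)}-\lambda)^{-1}$ with the resolvent $(P-\lambda)^{-1}$ of the scaled operator $P=P^{(\gamma)}_\Gamma$. The key analytic input, from \cite{SZ1} and the review in the Appendix, is that near the real axis the contour $\Gamma_\theta$ agrees with $\RR^n\setminus\mathcal O$ on a large compact set, so that $\chi R^{(\gamma)}(\zeta)\chi$ (for $\chi\in C_c^\infty$ supported where $\Gamma=\RR^n\setminus\mathcal O$) equals $\chi(P-\lambda)^{-1}\chi$ as meromorphic families, provided both sides are defined; this identity is what transports poles from one side to the other.

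The mechanism behind that identity is an analytic Fredholm / deformation argument: one shows that for $\Im\zeta>0$ large the scaled resolvent $(P-\lambda)^{-1}$ exists and coincides with $R^{(\gamma)}(\zeta)$ after cutting off, because in that regime the scaling contour can be rotated back to $\RR^n\setminus\mathcal O$ without crossing spectrum. Then one continues in $\zeta$: the left side $R^{(\gamma)}(\zeta)$ extends meromorphically through the region $-\varphi<\arg\zeta<0$ by definition of resonances, while the right side extends meromorphically because $P$ is an operator with discrete spectrum in the sector $e^{-i(0,2\varphi)}(0,\infty)$ (again \cite{SZ1}), its resolvent being meromorphic off that spectrum with poles exactly at $\lambda\in\operatorname{Spec}(P)$. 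Uniqueness of meromorphic continuation forces the cut-off resolvents to agree throughout, and hence the poles of $\zeta\mapsto R^{(\gamma)}(\zeta)$ in $-\varphi<\arg\zeta<0$ sit exactly at the points $\zeta$ with $\zeta^2\in\operatorname{Spec}(P)\cap\{-2\varphi<\arg\lambda<0\}$, i.e.\ at the square roots of those eigenvalues. One also needs the converse inclusion — that every such eigenvalue of $P$ produces a genuine pole of the Green function rather than a spurious one — which follows by constructing, from a corresponding eigenfunction of $P$, a resonant state for $P^{(\gamma)}$: restrict the scaled eigenfunction to the part of $\Gamma$ lying in $\RR^n$ and continue it outward, using ellipticity of $-\Delta|_\Gamma$ and the outgoing condition encoded by the contour's asymptotic angle $\varphi(\theta_1)$.

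The one genuinely new point compared with the Dirichlet case is that the boundary condition is not invariant under the scaling: the condition $\partial_\nu u+\gamma u=0$ on $\partial\mathcal O$ becomes $e^{-\pi i/3}\partial_t u+\gamma u|_{t=0}=0$, equivalently $\partial_\nu u+ku|_{\partial\Gamma}=0$ with the complex-valued coefficient $k=e^{\pi i/3}\gamma$ (in the rescaled normal variable; a harmless smooth factor appears from $|g_\theta'|$). So I would check that the deformation argument goes through with this $\zeta$-independent complex Robin coefficient: the key facts needed are that $D(\Gamma)=\{u\in H^2(\Gamma):\partial_\nu u+ku|_{\partial\Gamma}=0\}$ is a closed subspace, that $P:D(\Gamma)\to L^2(\Gamma)$ is Fredholm of index zero away from its spectrum, and that the boundary condition is stable under the homotopy of contours $\Gamma_\theta$ — all of which are local computations near $\partial\mathcal O$, carried out in the Appendix. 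The main obstacle, then, is not the equivalence itself but verifying that the complex scaling machinery of \cite{SZ4} — in particular the mapping properties and the absence of spectrum for large $\Im\zeta$ — survives the replacement of the real Dirichlet/Neumann condition by the complex Robin condition; once that is in place, Proposition \ref{re-sp} is immediate from uniqueness of analytic continuation and the identification $\lambda=\zeta^2$.
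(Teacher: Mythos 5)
Your proposal is correct and takes essentially the same approach as the paper. The paper reduces the proposition to the two deformation lemmas (Lemma \ref{nonchd} on non-characteristic deformations from \cite{SZ1}, and Lemma \ref{detob} on deformation up to the boundary from \cite{SZ4}, the latter re-proved in the Appendix precisely to handle the scaled Robin condition) combined with ``the method in \cite{SZ1}''; your sketch unfolds that method --- cut-off resolvent identity for large $\Im\zeta$, meromorphic continuation, and the converse that eigenfunctions of $P$ yield genuine resonant states --- while correctly singling out the deformation to the boundary with the complex coefficient $k$ as the one new technical point.
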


The proof of the theorem is based on the following deformation results. First, we have the lemma about non-characteristic deformations which is proved in \cite{SZ1}.

\begin{lem}
\label{nonchd}
Let $\omega\subset\RR^n$ be an open set, $h:[0,1]\times\omega\ni(t,y)\mapsto h(t,y)\in\mathbb{C}^n$ be a smooth proper map such that\\
(1) $\det(\partial_yh(t,y))\neq0$ for all $(t,y)$;\\
(2) $h(t,\cdot)$ is injective;\\
(3) $h(t,y)=h(0,y)$ for $y\in\omega\setminus K$ where $K$ is a compact subset of $\omega$.\\
We write $\Gamma_t=h(\{t\}\times\omega)$. Let $P(x,D_x)$ be a partial differential operator with holomorphic coefficients defined in a neighborhood of $h([0,1]\times\omega)$ such that $P|_{\Gamma_t}$ is elliptic for $0\leqslant t\leqslant1$. If $u_0\in\mathscr{D}'(\Gamma_0)$ and $P_{\Gamma_0}u_0$ extends to a holomorphic function in a neighborhood of $h([0,1]\times\omega)$, then $u_0$ extends to a holomorphic function in a neighborhood of $h([0,1]\times\omega)$.
\end{lem}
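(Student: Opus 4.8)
The plan is to prove the lemma by a continuity argument in the deformation parameter $t$, the real content being a uniform local holomorphic continuation statement at the critical value of $t$. First I would fix a holomorphic function $V$ on a neighborhood $\Omega$ of $h([0,1]\times\omega)$ with $V|_{\Gamma_0}=P_{\Gamma_0}u_0$, and observe that as soon as $u_0$ is known to extend to a holomorphic $\tilde u$ on a connected open $W\subset\Omega$ meeting $\Gamma_0$, one automatically has $P\tilde u=V$ on $W$: indeed $P\tilde u-V$ is holomorphic and vanishes on the totally real maximal submanifold $\Gamma_0\cap W$, hence vanishes identically. So throughout one works with the holomorphic equation $P\tilde u=V$.

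For the base case---existence of a holomorphic extension of $u_0$ near $\Gamma_0$---I would invoke analytic elliptic regularity. Since $\Gamma_0$ is totally real of maximal dimension, $P_{\Gamma_0}$ is elliptic with real-analytic coefficients (in the applications $\Gamma_0=\RR^n\setminus\mathcal{O}$ is even a piece of $\RR^n$), and $P_{\Gamma_0}u_0=V|_{\Gamma_0}$ is real-analytic; analytic hypoellipticity of elliptic operators with analytic coefficients (Morrey--Nirenberg) then shows $u_0$ is real-analytic on $\Gamma_0$, hence extends to a holomorphic function on a complex neighborhood of $\Gamma_0$. Because condition~(3) makes the deformation stationary outside a compact set $K$, this already produces a holomorphic extension of $u_0$ to a neighborhood of $h([0,\epsilon_0]\times\omega)$ for some $\epsilon_0>0$.

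Next I would let $A$ be the set of $\tau\in[0,1]$ for which $u_0$ extends to a holomorphic function on a connected neighborhood of $h([0,\tau]\times\omega)$ agreeing with $u_0$ on $\Gamma_0$, and prove $A=[0,1]$ by connectedness. By the base case $[0,\epsilon_0)\subset A$, so $A\neq\emptyset$, and $A$ is automatically a down-set. Openness of $A$ is elementary: if $\tau\in A$ with extension on $W\supset h([0,\tau]\times\omega)$, then continuity of $h$, compactness of $[0,\tau]\times K$, and the fact that $h(t,y)=h(0,y)\in\Gamma_0\subset W$ for $y\notin K$ give $h([0,\tau+\delta]\times\omega)\subset W$ for small $\delta>0$, so the same $\tilde u$ shows $\tau+\delta\in A$. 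The real issue is closedness. With $\tau=\sup A$, for each $\tau'<\tau$ take the extension $\tilde u_{\tau'}$ on a connected $W_{\tau'}\supset h([0,\tau']\times\omega)$; by uniqueness of analytic continuation these agree on the components of overlaps meeting $\Gamma_0$ (injectivity~(2) is used so that the $\Gamma_{\tau'}$ are embedded), so they glue to one holomorphic solution $\tilde u$ of $P\tilde u=V$ on a connected neighborhood $W$ of $h([0,\tau)\times\omega)$ with $\Gamma_\tau\subset\overline W$. Now the non-characteristic hypothesis enters: because $P|_{\Gamma_t}$ is elliptic for every $t$ and ellipticity is an open condition holding uniformly on the compact set $h(\{\tau\}\times K)$ and on the nearby $\Gamma_{\tau'}$, a local holomorphic continuation lemma of Cauchy--Kovalevskaya / Zerner type provides a \emph{fixed} complex neighborhood $N$ of $\Gamma_\tau$ to which every holomorphic solution of $P(\cdot)=V$ defined near $\Gamma_{\tau'}$ (for $\tau'$ close to $\tau$) extends; the point that makes $N$ uniform is that for a linear equation the radius of the Cauchy--Kovalevskaya solution depends only on bounds for the coefficients of $P$ near $K$, not on the solution itself, so it does not shrink as $\tau'\uparrow\tau$. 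Applying this to $\tilde u$ near some $\Gamma_{\tau'}$ with $\tau'\in A$ close enough to $\tau$, and matching the two holomorphic functions on their overlap, extends $\tilde u$ to $W\cup N$, a neighborhood of $h([0,\tau]\times\omega)$; hence $\tau\in A$. Thus $A$ is nonempty, open and closed in $[0,1]$, so $A=[0,1]$, which is the lemma.

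The main obstacle is precisely the uniform local continuation step in the closedness argument: one must rule out that the complex neighborhood to which the solution continues pinches off along the family $\{\Gamma_t\}$, and it is here that the hypothesis that the deformation stays non-characteristic---that $P|_{\Gamma_t}$ stays elliptic---is used, together with a quantitative linear Cauchy--Kovalevskaya estimate. A secondary technical nuisance is that $h$ is only smooth, so the intermediate $\Gamma_t$ need not be real-analytic submanifolds; I would handle this by always phrasing the continuation through the holomorphic function $\tilde u$ and the holomorphic equation $P\tilde u=V$ on open subsets of $\mathbb{C}^n$, rather than by re-deriving analyticity intrinsically on each $\Gamma_t$, and by using condition~(3) together with the properness of $h$ to reduce every uniformity statement to the compact set $K$.
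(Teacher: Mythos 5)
The paper does not actually prove this lemma; it states it as known and cites \cite{SZ1} (Sj\"ostrand--Zworski, 1991), so there is no in-text proof to compare against. Your proposal reconstructs the standard continuity-method proof, and the skeleton is right: define $A=\{\tau: u_0 \text{ extends holomorphically past } h([0,\tau]\times\omega)\}$, get $0\in A$ from analytic elliptic regularity, observe that upward-openness is essentially topological given hypotheses (2)--(3), and put all the analytic content into closedness via a uniform-radius extension estimate whose solution-independence comes from linearity. You also correctly flag the two genuine subtleties --- that the uniformity of the extension radius along the family is the whole point, and that the intermediate contours $\Gamma_t$ are only smooth, so one must phrase everything via the ambient holomorphic function $\tilde u$ rather than intrinsically.

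The one place where I would push back is your choice of tool for the closedness step. You invoke ``Cauchy--Kovalevskaya / Zerner,'' but both of those theorems are about propagating holomorphy across a real \emph{hypersurface} in $\mathbb{C}^n$ that is non-characteristic for $P$. Here the relevant geometry is different: each $\Gamma_t$ is a totally real submanifold of maximal dimension $n$, i.e.\ real codimension $n$, and the union $\bigcup_{t<\tau}\Gamma_t$ is at best $(n+1)$-dimensional, so its boundary near $\Gamma_\tau$ is not a hypersurface to which Zerner applies, and ``$P|_{\Gamma_t}$ elliptic'' is a condition on real cotangent directions to $\Gamma_t$, not a non-characteristic condition in a single transversal direction. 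What is actually needed is a quantitative version of the analytic hypoellipticity you already used for the base case: if $\tilde u$ is holomorphic in some (possibly very thin) neighborhood of a piece of $\Gamma_{\tau'}$, $P\tilde u = V$ there with $V$ holomorphic on a fixed larger set, and $P|_{\Gamma_{\tau'}}$ elliptic with uniformly controlled ellipticity constant and coefficient bounds, then $\tilde u$ extends holomorphically to a tube around $\Gamma_{\tau'}$ of a \emph{fixed} radius. This is proved by locally approximating the smooth $\Gamma_{\tau'}$ by its tangent totally real affine subspaces, applying Morrey--Nirenberg with an effective radius of convergence there (this is where linearity gives solution-independence), and gluing by uniqueness of analytic continuation in the ambient $\mathbb{C}^n$; equivalently one can track exponential decay of an FBI transform adapted to $\Gamma_{\tau'}$, which is closer to what \cite{SZ1} and Sj\"ostrand's Ast\'erisque do. So the architecture of your argument is the correct one and the key idea (linearity $\Rightarrow$ uniform radius) is identified, but the cited black box is the wrong one for this geometry; filling in the right quantitative elliptic estimate on totally real maximal submanifolds is where the remaining technical work lies.
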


Next we need to the following lemma providing that we can apply the deformation all the way to the boundary and it will satisfy the desired boundary properties.

\begin{lem}
\label{detob}
Let $u\in C^\infty(\RR^n\setminus\mathcal{O})$ satisfy $(-\Delta-\lambda^2)^{k_0}u=0,\partial^\alpha u|_{\partial\Omega}=\bar{u}_\alpha\in C^\infty(\partial\Omega)$ in a neighborhood of $x_0$. Then there exists a complex neighborhood $W$ of $x_0$ such that
(1) $u$ extends holomorphically to a function $U$ in a complex open neighborhood of $W\cap\bigcup_{|\theta|\leqslant\theta_0}\Gamma_\theta^0$;\\
(2) $u_\theta=U|_{\Gamma_\theta}$ is smooth up to $\partial\Gamma_\theta=\partial\mathcal{O}$;\\
(3) $(-\Delta|_{\Gamma_\theta}-\lambda^2)^{k_0}u_\theta=0,\partial^\alpha u_\theta|_{\partial\Gamma_\theta}=\bar{u}_\alpha$ in $\Gamma_\theta\cap W$.\\
Moreover, we may replace $\RR^n\setminus\mathcal{O}$ by any fixed $\Gamma_\eta$ with $|\eta|<\theta_0$.
\end{lem}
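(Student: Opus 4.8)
The plan is to carry out the complex-scaling-up-to-the-boundary argument of \cite{SZ4} while keeping careful track of the prescribed boundary jets $\bar u_\alpha$. Everything is local near $x_0$, so I would work in the normal geodesic coordinates $(y,t)\in\partial\mathcal O\times[0,\infty)$ of Section \ref{cs}. The structural fact that makes the scaling legitimate all the way to the boundary is that, because the normal exponential map $x=s(y)+t\nu(s(y))$ is real-analytic in $t$ uniformly on a fixed disc, the coefficients of $-\Delta$, hence of $(-\Delta-\lambda^2)^{k_0}$, are holomorphic in the complexified normal variable near $t=0$, with merely smooth dependence on $y$. Since $(-\Delta-\lambda^2)^{k_0}$ is elliptic with real-analytic coefficients, analytic hypoellipticity makes $u$ real-analytic in $\{t>0\}$ near $x_0$, so $u$ extends to a function $U(y,z)$, holomorphic in the complexified normal variable $z$ and smooth in $y$ near $x_0$, on a complex neighborhood of $\{0<t<\delta\}$; restricting $U$ to the images of the maps $(y,t)\mapsto s(y)+g_\theta(t)\nu(s(y))$ gives part (1) away from $\partial\mathcal O$. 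The same interior statement could be had from the non-characteristic deformation Lemma \ref{nonchd} applied to the family $\Gamma_{s\theta}$, $s\in[0,1]$, each elliptic for $-\Delta$ by Section \ref{cs}; what that lemma cannot do is reach $\partial\mathcal O$, because there the deforming map is no longer proper --- which is precisely why a separate argument is needed.

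The real content is (2) and (3). First, since $u$ is smooth up to $\{t=0\}$ and solves $(-\Delta-\lambda^2)^{k_0}u=0$, the equation can be solved for $\partial_t^{2k_0}u$ in terms of $\partial_t^ju$ with $j<2k_0$ and tangential derivatives; iterating, all the normal jets $\partial_t^ju|_{t=0}$ --- which are among the given data $\bar u_\alpha$ --- are recovered from the $2k_0$ lowest ones, and are automatically compatible with the equation since $u$ is a genuine solution. I would then show that $U$ stays holomorphic on a neighborhood of the open wedge swept out by the rays $z=g_\theta(t)$, $0<t<\delta$, $|\theta|\le\theta_0$, and that along each such ray $U$ is smooth up to $t=0$, with jets at $t=0$ equal to those of $u$ transported by the linear rotation $t\mapsto g_\theta(t)$. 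This is where one uses that only the normal variable has been complexified, that the coefficients are holomorphic in it, and that the scaled operator is elliptic --- near $t=0$ it has the Airy-type form $e^{-2\pi i/3}\big(D_t^2+2tQ(y,D_y)\big)+R(y,D_y)+\cdots$ --- so that, according to the factorization of this elliptic operator into incoming and outgoing parts, $U$ splits into a piece holomorphic across $\{t=0\}$ and a piece holomorphic only in the open half-plane $\Re z>0$ but smooth up to $z=0$ along the wedge, from which the jets can be read off. Granting this, $u_\theta:=U|_{\Gamma_\theta}$ is smooth up to $\partial\Gamma_\theta=\partial\mathcal O$, satisfies $(-\Delta|_{\Gamma_\theta}-\lambda^2)^{k_0}u_\theta=0$ by analytic continuation from $\{t>0\}$, and has the asserted boundary jets by construction; the final clause follows by running the same argument with a fixed $\Gamma_\eta$ in place of the real contour.

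The main obstacle is exactly this passage to $\partial\mathcal O$. In the interior the deformation is automatic, but at the smooth, non-analytic boundary $u$ need not be real-analytic up to $\partial\mathcal O$: the holomorphic neighborhood furnished by hypoellipticity pinches as $t\to0^+$, and $U$ is in general not holomorphic across $\{t=0\}$, only along the wedge. What rescues the argument --- and what has to be watched with care for a general Robin condition --- is the interplay of complexifying only the normal variable, of the holomorphic $t$-dependence of the coefficients, and of the ellipticity of the scaled operator: this is what forces the given solution $u$ to extend holomorphically throughout the wedge and smoothly up to its vertex with identifiable jets, and the identification of those jets, together with the incoming/outgoing split, must be made consistent with $\partial_\nu u+\gamma u|_{\partial\mathcal O}=0$ and its scaled form $e^{-\pi i/3}\partial_t u+\gamma u|_{t=0}=0$ --- which is precisely the point spelled out in the Appendix.
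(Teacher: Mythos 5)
Your interior part is essentially fine (analyticity of $u$ in $\{t>0\}$ and the non-characteristic deformation through the family of contours, with the correct observation that properness fails at $\partial\mathcal O$), and you correctly identify the crux: $u$ is not real-analytic up to the smooth boundary, so the holomorphic neighborhood collapses as $t\to0^+$ and $U$ cannot simply be read across $\{t=0\}$.

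But the step you offer to bridge this gap is not an argument, and it is not what the paper does. You invoke a ``factorization of this elliptic operator into incoming and outgoing parts'' to assert that $U$ ``splits into a piece holomorphic across $\{t=0\}$ and a piece holomorphic only in the open half-plane $\Re z>0$ but smooth up to $z=0$ along the wedge, from which the jets can be read off.'' Nothing in Section \ref{cs} or \ref{airy} supplies such a factorization for the scaled operator, and even for the model Airy-type operator $e^{-2\pi i/3}\bigl((hD_t)^2+2tQ\bigr)+R$ a microlocal incoming/outgoing split near $t=0$ is precisely what is delicate (this is the glancing region); asserting the split is tantamount to asserting the conclusion. In particular nothing in your sketch produces a \emph{quantitative} control of $U|_{\Gamma_\theta}$ near $\partial\mathcal O$, and without such control you cannot prove (2), nor that $u_\theta$ has distributional boundary values, nor that those values equal the $\bar u_\alpha$.

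The paper's actual mechanism is different and is worth internalizing. It is a blow-up plus quantitative strong unique continuation: one establishes, for the complex-scaled elliptic operator, an estimate of the form
$\|v\|_{H^m(\Omega_2)}\le C\bigl(\|Pv\|_{H^0(\Omega_3)}+\|v\|_{H^0(\Omega_3\setminus\Omega_1)}\bigr)$
uniformly in the relevant parameters, then introduces the rescaling $x=y+\epsilon\tilde x$ ($y\in\partial\mathcal O$ near $x_0$) and designs intermediate contours $\tilde\Gamma_{\theta,y,\epsilon}$, supported by a cut-off, so that in the blown-up picture the annulus $\Omega_3\setminus\Omega_1$ lies in $\RR^n$ (where $U=u$) while the inner ball $\Omega_0$ lies entirely on the scaled contour $\Gamma_{\theta,y,\epsilon}$. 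The unique-continuation estimate then bounds $\|\tilde U\|_{L^2(\Omega_1)}$ by $\|\tilde u\|_{L^2(\Omega_3\setminus\Omega_1)}$, uniformly in $y$ and $\epsilon$; this both makes $u_\theta=U|_{\Gamma_\theta}$ an $L^2$ function near $\partial\mathcal O$ and, upon subtracting the constant $\tilde u(0)=\bar u(y)$ and letting $\epsilon\to0$, shows that the distributional trace of $u_\theta$ agrees with $\bar u$. Higher jets are handled by repeating the argument for $\partial^\alpha u$. None of this appears in your proposal; the ``incoming/outgoing'' split, which carries the whole weight of parts (2)--(3) in your outline, is the missing idea, and as stated it would need to be replaced by (or developed into) something with the quantitative force of the blow-up estimate.

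Two smaller points. First, saying ``analytic hypoellipticity makes $u$ real-analytic in $\{t>0\}$'' conflates two things: analyticity in the interior holds because the flat Laplacian has analytic coefficients (in Cartesian coordinates), while in the $(y,t)$ chart the coefficients are analytic only in $t$; either way this gives nothing at $t=0$, as you acknowledge. Second, the compatibility with the Robin condition and its scaled form is not an extra constraint to be imposed on the extension; once (2) and (3) are proved with $\partial^\alpha u_\theta|_{\partial\Gamma_\theta}=\bar u_\alpha$, the scaled boundary condition is automatic. Flagging it as a separate danger, as your last sentence does, suggests a misplaced worry; the real danger is the one your factorization step leaves unaddressed.
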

This lemma was proved in \cite{SZ4}. We recall the proof with more details in the appendix. Combining these deformation results with the method in \cite{SZ1}, we have the proposition.

\subsection{Semiclassical Sobolev spaces}
\label{sob}
In this section, we review some basic facts about the semiclassical Sobolev spaces, especially the estimate of the trace operator which we shall use later.

Let $H_h^s(\RR^n)\subset\mathcal{S}'(\RR^n)$ be the semiclassical Sobolev space of order $s$ with the norm
\begin{equation*}
\|u\|_{H_h^s(\RR^n)}=\|\langle hD\rangle^s u\|_{L^2(\RR^n)}, \langle hD\rangle=(1+(hD)^2)^{\frac{1}{2}}.
\end{equation*}

Let $X$ be a compact smooth manifold, we can choose a finite cover $X_1,\ldots,X_p$ of $X$ where $X_1,\ldots, X_p$ are coordinate charts with local coordinates $x_1,\ldots,x_n$. Then there exists a partition of unity $\chi_j\in C_0^\infty(X_j)$, $\sum_{j=1}^p\chi_j=1$. We define the semiclassical Sobolev space $H_h^s(X)$ to be the space of all $u\in\mathcal{D}'(X)$ such that
\begin{equation*}
\|u\|_{H_h^s(X)}^2=\sum_{j=1}^p\|\chi_j\langle hD\rangle^s\chi_ju\|_{L^2(X_j)}^2<\infty.
\end{equation*}

For different choice of the coordinate charts and partition of unity, the norms are equivalent uniformly for $h>0$. Also, another equivalent norm can be given by $\|u\|_{H_h^s(X)}=\|(I-h^2\Delta)^{\frac{s}{2}}u\|_{L^2(X)}$, where $\Delta$ is the Laplacian operator with respect to some Riemannian metric. From this norm, we see that $H_h^s(X)$ is a Hilbert space with inner product $\langle u,v\rangle_{H_h^s(X)}=\langle(I-h^2\Delta)^{\frac{s}{2}}u,(I-h^2\Delta)^{\frac{s}{2}}v\rangle_{L^2(X)}$.

Now consider the trace operator $\Tr:C^\infty(\RR^n\setminus\mathcal{O})\to C^\infty(\partial\mathcal{O}),u\mapsto u|_{\partial\mathcal{O}}$. In the normal geodesic coordinates given in the previous section, it is equivalent to the operator $\Tr:C^\infty(X\times[0,\infty))\to C^\infty(X),\Tr u(y)=u(0,y)$.

\begin{prop}
\label{trace}
For $u\in C^\infty_0(X\times[0,\infty))$, we have $\|\Tr u\|_{H_h^1(X)}^2\leqslant Ch^{-1}\|u\|_{H_h^2(X\times[0,\infty))}^2$.
\end{prop}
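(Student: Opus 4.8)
\textit{Proof plan.} The strategy is to split tangential from normal directions and move all tangential smoothing through the trace. Since $\Tr$ only restricts to $\{x_n=0\}$, it commutes with any operator acting in the $y$-variables alone; in particular with $\Lambda := (I-h^2\Delta_X)^{1/2}$, the tangential elliptic operator used in Section~\ref{sob} to define an equivalent $H^s_h(X)$-norm, extended to act on $X\times[0,\infty)$ in the $y$-variables only. Hence, uniformly for $h$ small,
\begin{equation*}
\|\Tr u\|_{H^1_h(X)}^2 \;\asymp\; \|\Lambda\,\Tr u\|_{L^2(X)}^2 \;=\; \|\Tr(\Lambda u)\|_{L^2(X)}^2 ,
\end{equation*}
so it suffices to establish the $L^2$-trace bound $\|\Tr w\|_{L^2(X)}^2 \leqslant h^{-1}\bigl(\|w\|_{L^2(X\times[0,\infty))}^2 + \|h\partial_{x_n}w\|_{L^2(X\times[0,\infty))}^2\bigr)$ and then apply it with $w=\Lambda u$.

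For that $L^2$-bound one argues fibrewise in the normal variable. Fix $y\in X$; since $u$, hence $w$, has compact support in $x_n$, the fundamental theorem of calculus gives
\begin{equation*}
|w(y,0)|^2 = -2\Re\int_0^\infty \overline{w(y,x_n)}\,\partial_{x_n}w(y,x_n)\,dx_n \;\leqslant\; 2\int_0^\infty |w(y,x_n)|\,|\partial_{x_n}w(y,x_n)|\,dx_n .
\end{equation*}
Writing $|w|\,|\partial_{x_n}w| = h^{-1}|w|\,|h\partial_{x_n}w| \leqslant \tfrac12 h^{-1}\bigl(|w|^2+|h\partial_{x_n}w|^2\bigr)$ and integrating first in $x_n$ and then in $y$ over $X$ yields exactly the claimed inequality.

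Combining the two steps, and using that $\Lambda$ is a tangential operator of order one while $h\partial_{x_n}$ is a normal operator of order one, so that $\Lambda u$ and $\Lambda(h\partial_{x_n}u)$ involve only derivatives of total order $\leqslant 2$, we get
\begin{equation*}
\|\Tr u\|_{H^1_h(X)}^2 \;\lesssim\; \|\Tr(\Lambda u)\|_{L^2(X)}^2 \;\lesssim\; h^{-1}\bigl(\|\Lambda u\|_{L^2(X\times[0,\infty))}^2 + \|\Lambda(h\partial_{x_n}u)\|_{L^2(X\times[0,\infty))}^2\bigr) \;\lesssim\; h^{-1}\|u\|_{H^2_h(X\times[0,\infty))}^2 .
\end{equation*}

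The only point needing genuine care is the uniformity in $h$ of the norm equivalences used in the first and last displays, i.e. that on the manifold $X$ the partition-of-unity norm and the $(I-h^2\Delta_X)^{s/2}$-norm agree up to $h$-independent constants, and that $\Lambda$ is bounded $H^{s}_h\to H^{s-1}_h$ on the product $X\times[0,\infty)$ uniformly. This reduces, via the coordinate charts $X_j\times[0,\infty)$ and a partition of unity $\{\chi_j\}$ depending only on $y$ (and extended trivially in $x_n$), to the half-space $\RR^{n-1}\times[0,\infty)$, where a partial Fourier transform in $y$ makes the weighted estimate above completely explicit symbol-by-symbol in $\xi'$; the commutators $[\langle hD_y\rangle^s,\chi_j]$ are of lower order with an extra factor of $h$, which is exactly why the various definitions of $H^s_h$ coincide up to $O(h)$, hence up to constants for $h$ small. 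I would record these as standing conventions from Section~\ref{sob} rather than belabour them in the proof.
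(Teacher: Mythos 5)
Your proof is correct and follows essentially the same route as the paper's: both rest on the fundamental theorem of calculus in the normal variable plus an $h$-weighted Cauchy--Schwarz/AM--GM step producing the $h^{-1}$, with the tangential $H^1_h(X)$-regularity absorbed into $H^2_h$. The only cosmetic difference is that you first commute $\Lambda=(I-h^2\Delta_X)^{1/2}$ past $\Tr$ and run FTC at the scalar $L^2$ level, whereas the paper applies FTC directly to the $H^1_h(X)$-valued function $t\mapsto u(\cdot,t)$, which avoids introducing $\Lambda$ explicitly.
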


\begin{proof}
Since $u\in C^\infty_0(X\times[0,\infty))$, we know there exists $L>0$ such that $u$ is supported in $X\times[0,L]$. Therefore
\begin{equation*}
\begin{split}
\|\Tr u\|_{H_h^1(X)}^2&=-h^{-1}\int_0^\infty hD_t\|u(\cdot,t)\|_{H_h^1(X)}^2dt\leqslant 2h^{-1}\int_0^\infty|\langle hD_tu(\cdot,t),u(\cdot,t)\rangle_{H_h^1(X)}|dt\\
&\leqslant h^{-1}\int_0^\infty[\|hD_tu(\cdot,t)\|_{H_h^1(X)}^2+\|u(\cdot,t)\|_{H_h^1(X)}^2]dt\leqslant Ch^{-1}\|u\|_{H_h^2(X\times[0,\infty))}^2.
\end{split}
\end{equation*}
\end{proof}

\begin{rem}
In fact, we can even prove that $\Tr=O_s(h^{-\frac{1}{2}}):H_h^s(\Omega)\to H_h^{s-\frac{1}{2}}(\partial\Omega)$ when $s>\frac{1}{2}$, $\Omega\subset\subset\RR^n$ an open set with smooth boundary.
\end{rem}

\subsection{The FBI transform}
\label{fbi}
This section is devoted to a brief introduction of Fourier-Bros-Iagolnitzer (FBI) transform on $\RR^n$ or a smooth compact manifold. The FBI transform was first introduced by Bros and Iagolnitzer to study the analytic singularity of a distribution and later on became an extremely useful tool in microlocal analysis. We refer to the books by Delort \cite{De}, Folland \cite{Fo} and Sj\"{o}strand \cite{Sj} for more details. An account of the semiclassical theory needed here can be found in
Martinez \cite{Ma} and Zworski \cite[Chapter 13]{EZ}. In our argument an important part is
played by the proof of the sharp G{\aa}rding inequality given by Cordoba and Fefferman \cite{CF}.
Wunsch and Zworski \cite{WZ} (see also \cite{SjC} for the analytic case) adapted the FBI transform to compact
 Riemannian manifolds.

We shall first review the basic facts about FBI transform on $\RR^n$, then follow \cite{WZ}
to describe the FBI transform on a compact manifold which for us will eventually be $\partial \mathcal O$.

The FBI transform of a function $u\in\mathscr{S}'(\RR^n)$ is given by
\begin{equation*}
T_hu(x,\xi)=2^{-\frac{n}{2}}(\pi h)^{-\frac{3n}{4}}\int_{\RR^n}e^{-\frac{1}{2h}(x-y)^2+\frac{i}{h}(x-y)\cdot\xi}u(y)dy.
\end{equation*}
This is a special case of the wave packet transform
\begin{equation*}
T_hu(x,\xi)=\langle u,\varphi_{(x,\xi,h)}\rangle_{\mathscr{S}',\mathscr{S}},
\end{equation*}
where $\varphi_{(x,\xi,h)}$ is a ``wave packet'' concentrated at $(x,\xi)\in T^\ast\RR^n$. If we take $\varphi_{(x,\xi,h)}$ to be the coherent state, then we get the FBI transform. The FBI transform is also closely related to the Bargmann transform in several complex variables:
\begin{equation*}
\tilde{T}_hu(z)=\int_{\RR^n}e^{-\frac{1}{2h}(z-y)^2}u(y)dy, z\in\mathbb{C}^n.
\end{equation*}
If we identify $z=x-i\xi\in\mathbb{C}^n$ with $(x,\xi)\in T^\ast\RR^n$, then
\begin{equation*}
T_hu(x,\xi)=2^{-\frac{n}{2}}(\pi h)^{-\frac{3n}{4}}e^{-\frac{1}{2h}\xi^2}\tilde{T}_hu(z).
\end{equation*}
Let us define $L^2_\Phi(\mathbb{C}^n)$ to be the $L^2$-space with the weight $e^{-\frac{1}{h}(\Im z)^2}m(dz)$ where $m(dz)$ is the Lebesgue measure on $\mathbb{C}^n$, then $\tilde{T}_h:L^2(\RR^n)\to L^2_\Phi(\mathbb{C}^n)$ is an isometry with image $H_\Phi(\mathbb{C}^n)=\{f\in L^2_\Phi(\mathbb{C}^n): f \mbox{ is holomorphic in } \mathbb{C}^n\}$.

Back to the FBI transform $T_h:L^2(\RR^n)\to L^2(\RR^{2n})$, the adjoint $T^\ast_h:L^2(\RR^{2n})\to L^2(\RR^n)$ is given by
\begin{equation*}
T_h^\ast v(y)=2^{-\frac{n}{2}}(\pi h)^{-\frac{3n}{4}}\int_{\RR^{2n}}e^{-\frac{1}{2h}(x-y)^2-\frac{i}{h}(x-y)\cdot\xi}v(x,\xi)dxd\xi
\end{equation*}
From the properties of the Bargmann transform, we know that $T_h$ is an isometry with image $L^2(\RR^{2n})\cap e^{-\frac{\xi^2}{2h}}A(\mathbb{C}^n_{x-i\xi})$. Thus $T_h^\ast T_hu=u$ for every $u\in L^2(\RR^n)$ and $T_hT_h^\ast$ is the orthogonal projection in $L^2(\RR^{2n})$ onto the image $L^2(\RR^{2n})\cap e^{-\frac{\xi^2}{2h}}A(\mathbb{C}^n_{x-i\xi})$.\\

Now let $(X,g)$ be a compact Riemannian manifold. We write $y$ to be a point on $X$, $dy$ to be the volume form on $X$; $(x,\xi)$ a point on $T^\ast X$ (where $x\in X,\xi\in T_x^\ast X$) and $dxd\xi$ to be the canonical volume form on $T^\ast X$.

An admissible phase function $\varphi(x,\xi,y)$ is a smooth function on $T^\ast X\times X$ satisfying the following conditions:
\begin{equation*}
\begin{split}
& \text{(1) $\varphi$ is an elliptic polyhomogeneous symbol of order one in $\xi$;}\\
& \text{(2) $\Im\varphi\geqslant0$;}\\
& \text{(3) $d_y\varphi|_\Delta=-\xi dy$;}\\
& \text{(4) $d_y^2\Im\varphi|_\Delta\sim\langle\xi\rangle$;}\\
& \text{(5) $\varphi|_\Delta=0$.}
\end{split}
\end{equation*}

Therefore near the diagonal $\Delta$, $\varphi=\xi\cdot(x-y)+\langle Q(x,\xi,y)(x-y),(x-y)\rangle$ where $Q$ is a symmetric matrix-valued symbol of degree 1 in $\xi$ with $\Im Q|_{\Delta}\sim\langle\xi\rangle I$.

The symbol class $h^mS^k_{phg}(T^\ast X\times X)$ is defined to be the collection of all smooth functions
\begin{equation*}
a=a(x,\xi,y;h)\sim h^m(a_k(x,\xi,y)+ha_{k-1}(x,\xi,y)+\cdots)
\end{equation*}
where $a_j(x,\xi,y)$ are polyhomogeneous symbols of degree $j$ in $\xi$ and the asymptotic expansion means that
\begin{equation*}
|a-h^m(a_k+\cdots+h^ja_{k-j})|\leqslant C_jh^{m+j+1}|\xi|^{k-j-1}, |\xi|>1.
\end{equation*}
The symbol class $h^mS^k_{phg}(T^\ast X)$ is defined similarly, without the $y$-components.

A symbol $a\in h^mS^k_{phg}$ is called elliptic if the principal part $|a_k|\sim\langle\xi\rangle$ uniformly with respect to other variables. The quantization of $a$ is defined as the operator
\begin{equation*}
\Op(a)u(y)=\frac{1}{(2\pi h)^n}\int e^{i\xi\exp_x^{-1}(y)/h}a(x,\xi,y;h)u(x)\chi(y,x)dxd\xi,
\end{equation*}
where $\exp$ is the exponential map with respect to $g$ on $X$. We shall write $h^m\Psi^k(X)$ to be the algebra of all operators $\Op(a)+R,a\in h^mS^k,R=O(h^\infty):C^{-\infty}(X)\to C^\infty(X)$.

Then following \cite{WZ}, an FBI transform on $X$ is an operator $T_h:C^\infty(X)\to C^\infty(T^\ast X)$ given by
\begin{equation}
\label{fbim}
T_hu(x,\xi)=\int_Xe^{\frac{i}{h}\varphi(x,\xi,y)}a(x,\xi,y;h)\chi(x,\xi,y)u(y)dy.
\end{equation}
Here  $\varphi(x,\xi,y)$ is an admissible phase function; $a(x,\xi,y;h)\in h^{-\frac{3n}{4}}S^{\frac{n}{4}}$ is an elliptic polyhomogeneous symbol; $\chi(x,\xi,y)$ is a cut-off function to a small neighborhood of the diagonal $\Delta=\{(x,\xi,y)\in T^\ast X\times X:x=y\}$ such that $\Im\varphi\leqslant-C^{-1}d(x,y)^2$ on the support of $\chi$.

The following properties of the FBI transform were proved in \cite{WZ}:\\
(1) $T_h:L^2(X)\to L^2(T^\ast X)$ is bounded for $h<h_0$;\\
(2) We can choose a suitable phase $\varphi$ and elliptic symbol $a$ such that
\begin{equation}
\label{aliso}
\|T_hu\|_{L^2(T^\ast X)}=(1+O(h^\infty))\|u\|_{L^2(X)},
\end{equation}
i.e. $T_h$ is an isometry modulo $h^\infty$.

From now on, we shall always use such kind of FBI transforms. Furthermore, we know from \cite{WZ}:
\begin{lem}
Let $P=\Op(p)\in h^k\Psi^m(X)$, then $T_h^{\ast}pT_h-P\in h^{k+1}\Psi^{m-1}$.
\end{lem}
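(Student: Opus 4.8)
The plan is to prove the Egorov-type identity $T_h^{\ast} p T_h - P \in h^{k+1}\Psi^{m-1}$ by a direct symbolic computation, comparing $T_h^{\ast} p T_h$ (composition of integral operators with Gaussian-type kernels concentrated near the diagonal) with the quantization $P = \Op(p)$, and extracting the leading term of the full symbol of the difference. The key point is that since $T_h$ is an isometry modulo $h^\infty$, the operator $T_h^{\ast} p T_h$ is already close to $P$; one must verify that the error is not just $O(h^k)$ in the wrong symbol class but genuinely picks up an extra power of $h$ and loses a derivative.

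First I would write out the Schwartz kernel of $T_h^{\ast} p T_h$ as an oscillatory integral:
\begin{equation*}
(T_h^{\ast} p T_h u)(y') = \frac{1}{(2\pi h)^n}\int e^{\frac{i}{h}(\varphi(x,\xi,y) - \overline{\varphi(x,\xi,y')})}\, \overline{a(x,\xi,y')}\, a(x,\xi,y)\, p(x,\xi)\, \chi\chi'\, u(y)\, dy\, dx\, d\xi,
\end{equation*}
and then apply stationary phase in the $(x,\xi)$ variables around the critical set, which — because of the admissibility conditions $d_y\varphi|_\Delta = -\xi\, dy$, $\varphi|_\Delta = 0$, and $\Im\varphi \geq 0$ with $d_y^2\Im\varphi|_\Delta \sim \langle\xi\rangle$ — is exactly the diagonal $x = y = y'$. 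The normalization $a \in h^{-3n/4}S^{n/4}$ together with the isometry property (\ref{aliso}) forces the leading stationary-phase constant to be $1$, so the principal symbol of $T_h^{\ast} p T_h$ is $p$ itself, matching $P$ to top order. Then I would push the stationary phase expansion one step further: the first correction term involves one application of the Hessian-inverse differential operator to the amplitude, which produces either a factor of $h$ times a derivative of $p$ (lowering the order by one, since differentiating a symbol of order $m$ in $\xi$ gains $\langle\xi\rangle^{-1}$) or a factor of $h$ from the subprincipal part of $a$ paired against $p$ — in both cases landing in $h^{k+1}S^{m-1}$.

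The main obstacle I expect is bookkeeping the phase $\varphi(x,\xi,y) - \overline{\varphi(x,\xi,y')}$ away from the diagonal and controlling the non-stationary contributions: one must show that off a small neighborhood of $\Delta$ the imaginary part of this phase is bounded below by $C^{-1}(d(x,y)^2 + d(x,y')^2)$ (using $\Im\varphi \leq -C^{-1}d(x,y)^2$ on $\supp\chi$ from the definition of the FBI transform and $\Im\varphi \geq 0$ globally), so that those contributions are $O(h^\infty)$ and absorbed into the residual term $R = O(h^\infty)$ allowed in the definition of $h^{k+1}\Psi^{m-1}$. A secondary technical nuisance is that $\varphi$ is only a phase near the diagonal and $Q(x,\xi,y)$ is complex-valued, so the stationary phase method must be the complex (almost-analytic) version with a non-degenerate complex Hessian — whose non-degeneracy is guaranteed precisely by condition (4), $d_y^2\Im\varphi|_\Delta \sim \langle\xi\rangle$. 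Once the leading and subleading terms are pinned down, identifying the difference's symbol as an element of $h^{k+1}S^{m-1}_{phg}$ modulo $O(h^\infty)$ is routine, and one invokes the algebra structure of $h^m\Psi^k(X)$ recorded above to conclude $T_h^{\ast} p T_h - P \in h^{k+1}\Psi^{m-1}$.
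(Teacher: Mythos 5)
The paper does not prove this lemma; it cites \cite{WZ} and states it as a known fact, so your sketch is a reconstruction of the Wunsch--Zworski argument rather than an alternative to a proof in the text. Your overall strategy (write $T_h^\ast p T_h$ as an oscillatory integral, do complex stationary phase in $(x,\xi)$ using the admissibility conditions to make the critical manifold the diagonal, dispose of off-diagonal contributions via $\Im\varphi\leqslant -C^{-1}d(x,y)^2$, and use the isometry normalization \eqref{aliso} to pin down the principal term) is the correct one.

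There is, however, a gap at the single most delicate point: \emph{why} the error lands in $S^{m-1}$ rather than merely in $S^m$. You write that the first correction ``produces either a factor of $h$ times a derivative of $p$ (lowering the order by one $\ldots$) or a factor of $h$ from the subprincipal part of $a$ paired against $p$ --- in both cases landing in $h^{k+1}S^{m-1}$.'' Two corrections are needed. First, a generic ``derivative of $p$'' does not lower the order: only $\xi$-derivatives gain $\langle\xi\rangle^{-1}$, while $x$-derivatives leave the order unchanged. To justify the loss of one order you must examine the block structure of the Hessian of the total phase $\Phi=\varphi(x,\xi,y)-\overline{\varphi(x,\xi,y')}$ at the critical manifold: since the admissible phase has $\varphi=\xi\cdot(x-y)+\langle Q(x-y),x-y\rangle$ near $\Delta$, the $\partial_\xi^2$ block of $\Phi''$ vanishes on $\Delta$, the $\partial_x\partial_\xi$ block is the identity, and the $\partial_x^2$ block is $\sim\langle\xi\rangle$. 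The inverse Hessian then has a zero $\partial_x^2$ block, so the stationary-phase operator $\langle\Phi''^{-1}D,D\rangle$ applied to the amplitude is (schematically) $2\partial_x\partial_\xi - A\partial_\xi^2$ with $|A|\sim\langle\xi\rangle$; every term contains at least one $\partial_\xi$, and even the term with weight $A$ has two $\partial_\xi$'s so the net order is still $m-1$. This is the heart of the lemma and must be stated explicitly rather than absorbed into a vague ``derivative of $p$.'' Second, the alternative mechanism you list --- subprincipal part of $a$ against undifferentiated $p$ --- gives a term in $h^{k+1}S^{m}$, \emph{not} $S^{m-1}$; it is not a second source of admissible error but rather is cancelled outright, because the isometry \eqref{aliso} says precisely that the sum of all stationary-phase terms in which no derivative falls on $p$ equals $p\cdot(1+O(h^\infty))$. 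You should separate ``terms with no derivative on $p$'' (handled by the isometry, giving exactly $P$ modulo $O(h^\infty)$) from ``terms with at least one derivative on $p$'' (handled by the Hessian structure, giving $h^{k+1}S^{m-1}$), rather than lumping them into ``both cases.''
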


We can apply this to prove the following

\begin{prop}
(1) For any $u\in C^\infty(X)$,
\begin{equation}
\label{fbi1}
\|\langle\xi\rangle T_hu\|_{L^2(T^\ast X)}\leqslant C\|u\|_{H_h^1(X)}.
\end{equation}
(2) If $A(x,hD_x)$ is a second-order differential operator on $X$, then for any $u\in C^\infty(X)$,
\begin{equation}
\label{fbi2}
\|A(x,\xi)Tu\|_{L^2(T^\ast X)}^2=\|A(x,hD_x)u\|_{L^2(X)}^2+O(h)\|u\|_{H_h^2}^2.
\end{equation}
\end{prop}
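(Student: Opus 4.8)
The plan is to deduce both statements from the lemma relating $T_h^\ast p T_h$ to $\Op(p)$ together with the isometry property \eqref{aliso}, treating the two parts in parallel.

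For part (1), I would start from the observation that $\langle\xi\rangle^2$ is (the symbol of) an element of $h^0\Psi^2(X)$, so by the lemma $T_h^\ast\langle\xi\rangle^2 T_h = \Op(\langle\xi\rangle^2) + E$ with $E\in h\Psi^1$, and $\Op(\langle\xi\rangle^2)$ differs from $\langle hD_x\rangle^2 = I - h^2\Delta_g$ by a lower-order term. Then $\|\langle\xi\rangle T_h u\|_{L^2(T^\ast X)}^2 = \langle T_h^\ast\langle\xi\rangle^2 T_h u, u\rangle_{L^2(X)} = \langle (I-h^2\Delta_g) u, u\rangle + \langle E' u, u\rangle$ where $E'\in h\Psi^1(X)$. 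The first term is exactly $\|u\|_{H_h^1(X)}^2$ (in the Laplacian-based equivalent norm from Section \ref{sob}), and the error term is bounded by $Ch\|u\|_{H_h^{1/2}}^2 \le Ch\|u\|_{H_h^1}^2$, which is absorbed for $h$ small. This gives \eqref{fbi1}, in fact with the constant $C$ arbitrarily close to $1$ for $h$ small — though stating it with a fixed $C$ suffices.

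For part (2), let $A = A(x,hD_x)$ be a second-order semiclassical differential operator with symbol $A(x,\xi)$, so $A\in h^0\Psi^2(X)$. The quantity $\|A(x,\xi) T_h u\|_{L^2(T^\ast X)}^2 = \langle T_h^\ast |A(x,\xi)|^2 T_h u, u\rangle$, and $|A(x,\xi)|^2 = \overline{A}A$ is the symbol of $A^\ast A\in h^0\Psi^4(X)$ modulo $h^1\Psi^3$ (composition of quantizations). Applying the lemma once more, $T_h^\ast|A(x,\xi)|^2 T_h = A^\ast A + F$ with $F\in h\Psi^3(X)$. Hence $\|A(x,\xi)T_hu\|^2 = \langle A^\ast A u, u\rangle + \langle F u, u\rangle = \|A(x,hD_x)u\|_{L^2(X)}^2 + O(h)\|u\|_{H_h^{3/2}}^2$, and since $F$ has order $3$ we may crudely bound $\|u\|_{H_h^{3/2}} \le \|u\|_{H_h^2}$ to land on the stated $O(h)\|u\|_{H_h^2}^2$ remainder. (One must be slightly careful that the symbol of $A^\ast A$ agrees with $|A(x,\xi)|^2$ only to leading order, but the discrepancy is again in $h\Psi^3$ and folds into $F$.)

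The main obstacle — really the only delicate point — is bookkeeping the symbol classes and orders precisely through the composition and adjoint operations: one needs that the FBI calculus from \cite{WZ} is an honest algebra with the expected composition formula $\Op(a)\Op(b) = \Op(ab) + h\Psi^{\deg a+\deg b - 1}$ and adjoint formula, so that "symbol of $A^\ast A$ equals $|A|^2$ modulo lower order" is legitimate, and that mapping properties like $h\Psi^k : H_h^s \to H_h^{s-k}$ hold uniformly in $h$. These are standard consequences of the setup in \cite{WZ} and the preceding lemma; once they are invoked, both identities follow by the short computations above. I would also note in passing that the $O(h)$ error terms are self-adjoint-symmetrizable so the real parts control everything, but this is not essential since we only need the stated inequalities/asymptotics.
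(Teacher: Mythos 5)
Your proof is correct and follows essentially the same route as the paper: both parts start from $\|p(x,\xi)T_hu\|^2=\langle T_h^\ast|p|^2T_hu,u\rangle$, invoke the lemma $T_h^\ast p T_h-\Op(p)\in h^{k+1}\Psi^{m-1}$, and then identify $\Op(\langle\xi\rangle^2)$ with $I-h^2\Delta$ (part 1) resp.\ $\Op(|A|^2)$ with $A(x,hD)^\ast A(x,hD)$ modulo $h\Psi^3$ (part 2), absorbing the $h\Psi^1$ and $h\Psi^3$ remainders into $O(h)\|u\|_{H_h^{1/2}}^2$ and $O(h)\|u\|_{H_h^{3/2}}^2$. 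The bookkeeping you flag as the only delicate point (adjoint/composition calculus for the FBI quantization) is precisely what the paper also appeals to when it writes $(\bar A A)(x,hD)=A(x,hD)^\ast A(x,hD)\ \mathrm{mod}\ h\Psi^3$.
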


\begin{proof}
(1)
\begin{equation*}
\begin{split}
\|\langle\xi\rangle T_hu\|^2&=\langle\langle\xi\rangle T_hu,\langle\xi\rangle T_hu\rangle=\langle T_h^\ast\langle\xi\rangle^2T_hu,u\rangle\\
&=\langle(I-\Delta)u,u\rangle+\langle Ru,u\rangle=\|u\|_{H_h^1(X)}+\langle Ru,u\rangle,
\end{split}
\end{equation*}
where $R\in h\Psi^1$. So $\langle Ru,u\rangle=O(h)\|u\|_{H_h^{\frac{1}{2}}(X)}^2$.

(2) Notice that by symbol calculus
\begin{equation*}
(\bar{A}A)(x,hD)=A(x,hD)^\ast A(x,hD) \mod{h\Psi^3}
\end{equation*}
we have the following
\begin{equation*}
\begin{split}
\|A(x,\xi)Tu\|_{L^2(T^\ast X)}^2&=\langle A(x,\xi)Tu,A(x,\xi)Tu\rangle=\langle T^\ast|A(x,\xi)|^2Tu,u\rangle\\
&=\langle A(x,hD)^\ast A(x,hD)u,u\rangle+\langle Ru,u\rangle=\|A(x,hD)u\|^2+\langle Ru,u\rangle,
\end{split}
\end{equation*}
where $R\in h\Psi^3$. So $\langle Ru,u\rangle=O(h)\|u\|_{H_h^{\frac{3}{2}}(X)}^2$.
\end{proof}

\begin{rem}
We also notice that all the discussion above work for functions with value in a Hilbert space $\mathscr{H}$. In our case, we shall choose $X=\partial\mathcal{O}$ and $\mathscr{H}=L^2([0,\infty))$.
\end{rem}

\section{Estimates of the Airy-type operator}
\label{airy}

In this section, we shall give the lower bounds for the ordinary differential operator
\begin{equation}
\label{eq:airy}
P=e^{-2\pi i/3}((hD_t)^2+t)+O(h)hD_t+O(h+h^{\frac{1}{2}}t+t^2)
\end{equation}
define on $[0,\infty)$ with general conditions at $t=0$.

\subsection{The Dirichlet and Neumann realization}
\label{dn}

Let $Ai$ be the Airy function defined by
\begin{equation*}
Ai(s)=\frac{1}{2\pi}\int_{\Im\sigma=\delta>0}e^{i(\sigma^3/3)+i\sigma s}d\sigma.
\end{equation*}
Then we can give all the eigenfunctions and eigenvalues for the Dirichlet and Neumann realization of the Airy operator $D_s^2+s$ on $[0,\infty)$:
\begin{equation*}
(D_s^2+s)Ai(s-\zeta_j)=\zeta_jAi(s-\zeta_j),\ \ \ Ai(-\zeta_j)=0;
\end{equation*}
\begin{equation*}
(D_s^2+s)Ai(s-\zeta_j')=\zeta_j'Ai(s-\zeta_j'),\ \ \ Ai'(-\zeta_j')=0,
\end{equation*}
where $0<\zeta_1<\zeta_2<\cdots,0<\zeta_1'<\zeta_2'<\cdots$ and $\zeta_1\thickapprox2.338,\zeta_1'\thickapprox1.019$. The spectral theorem gives the following estimates:

Let $v\in C_0^\infty[0,\infty)$, if $v(0)=0$, then
\begin{equation}
\label{ei:d0}
\langle(D_s^2+s)v,v\rangle\geqslant\zeta_1\|v\|^2;
\end{equation}
if $D_sv(0)=0$, then
\begin{equation}
\label{ei:n0}
\langle(D_s^2+s)v,v\rangle\geqslant\zeta_1'\|v\|^2.
\end{equation}

Now we consider the semiclassical version of the Airy operator, $(hD_t)^2+t$. By changing the variables $t=h^{\frac{2}{3}}s$, we have \begin{equation*}
(hD_t)^2+t=h^{\frac{2}{3}}(D_s^2+s).
\end{equation*}
For $u=u(t)$, we define $v(s)=h^{\frac{1}{3}}u(h^{\frac{2}{3}}s)$, then $u(t)=h^{-\frac{1}{3}}v(h^{-\frac{2}{3}}t)$, $\|u\|_{L^2_t}=\|v\|_{L^2_s}$ and \begin{equation*}
((hD_t)^2+t)u(t)=h^{\frac{1}{3}}(D_s^2+s)v(s).
\end{equation*}
Therefore
\begin{equation*}
\langle((hD_t)^2+t)u,u\rangle_{L^2_t}=h^{\frac{2}{3}}\langle(D_s^2+s)v,v\rangle_{L^2_s}.
\end{equation*}

Applying the estimates \eqref{ei:d0} and \eqref{ei:n0}, we have for $u\in C_0^\infty[0,\infty)$, if $u(0)=0$, then
\begin{equation}
\label{ei:dh0}
\langle((hD_t)^2+t)u,u\rangle\geqslant\zeta_1h^{\frac{2}{3}}\|u\|^2
\end{equation}
if $D_tu(0)=0$, then
\begin{equation}
\label{ei:nh0}
\langle((hD_t)^2+t)u,u\rangle\geqslant\zeta_1'h^{\frac{2}{3}}\|u\|^2
\end{equation}

We also have the following useful identity: for $u\in C_0^\infty([0,\infty)), u(0)=0$ or $D_tu(0)=0$,
\begin{equation*}
\langle((hD_t)^2+t)u,u\rangle=\langle(hD_t)^2u,u\rangle+\langle
tu,u\rangle=\|hD_tu\|^2+\|t^{\frac{1}{2}}u\|^2,
\end{equation*}
and consequently
\begin{equation}
\label{ei:dnh1}
\langle((hD_t)^2+t)u,u\rangle\geqslant\|hD_tu\|^2;
\end{equation}
\begin{equation}
\label{ei:dnht}
\langle((hD_t)^2+t)u,u\rangle\geqslant\|t^{\frac{1}{2}}u\|^2.
\end{equation}

Now we could estimate $\|((hD_t)^2+t)u\|$ by the Cauchy-Schwartz inequality
\begin{equation*}
\|((hD_t)^2+t)u\|\|u\|\geqslant\langle((hD_t)^2+t)u,u\rangle.
\end{equation*}

If $u(0)=0$, then by \eqref{ei:dh0}
\begin{equation}
\label{eo:dh0}
\|((hD_t)^2+t)u\|\geqslant\zeta_1h^{\frac{2}{3}}\|u\|,
\end{equation}
and by \eqref{ei:dnh1}
\begin{equation*}
\|((hD_t)^2+t)u\|^2\geqslant\zeta_1h^{\frac{2}{3}}\langle((hD_t)^2+t)u,u\rangle\geqslant\zeta_1h^{\frac{2}{3}}\|hD_tu\|^2,
\end{equation*}
thus
\begin{equation}
\label{eo:dh1}
\|((hD_t)^2+t)u\|\geqslant\sqrt{\zeta_1}h^{\frac{1}{3}}\|hD_tu\|
\end{equation}

Similarly, if $D_tu(0)=0$, by \eqref{ei:nh0} and \eqref{ei:dnh1}, we have
\begin{equation}
\label{eo:nh0}
\|((hD_t)^2+t)u\|\geqslant\zeta_1'h^{\frac{2}{3}}\|u\|;
\end{equation}
\begin{equation}
\label{eo:nh1}
\|((hD_t)^2+t)u\|\geqslant\sqrt{\zeta_1'}h^{\frac{1}{3}}\|hD_tu\|.
\end{equation}

Another way to estimate $\|((hD_t)^2+t)u\|$ is to use the following identity: If $u(0)=0$ or $D_tu(0)=0$,
since $\langle u,hD_tu\rangle$ is real
\begin{equation*}
\begin{split}
\|((hD_t)^2+t)u\|^2&=\|(hD_t)^2u\|^2+\|tu\|^2+2\Re\langle tu,(hD_t)^2u\rangle\\
                   &=\|(hD_t)^2u\|^2+\|tu\|^2+2\Re\langle hD_t(tu),hD_tu\rangle\\
                   &=\|(hD_t)^2u\|^2+\|tu\|^2+2\Re\langle thD_tu,hD_tu\rangle+h\Re\frac{2}{i}\langle u,hD_tu\rangle\\
                   &=\|(hD_t)^2u\|^2+\|tu\|^2+2\|t^{\frac{1}{2}}hD_tu\|^2.
\end{split}
\end{equation*}

This gives us the following estimates
\begin{equation}
\label{eo:dnh2}
\|((hD_t)^2+t)u\|^2\geqslant\|(hD_t)^2u\|^2+\|tu\|^2\geqslant\|(hD_t)^2u\|^2.
\end{equation}

\subsection{General condition}
\label{gc}
Now we remove the Dirichlet or Neumann condition at $t=0$ and try to get a lower bound of $\langle((hD_t)^2+t)u,u\rangle$. In this case, $(hD_t)^2+t$ is no longer a self-adjoint operator, but the semiclassical setting allows us to view it as a perturbation of the Neumann realization.
We shall start with the following basic estimate:
\begin{equation*}
\begin{split}
\|hD_tu\|^2&=\langle hD_tu,hD_tu\rangle=\langle(hD_t)^2u,u\rangle-ih^2D_tu(0)\bar{u}(0)\\
&\leqslant\langle((hD_t)^2+t)u,u\rangle-ih^2D_tu(0)\bar{u}(0).
\end{split}
\end{equation*}
Since the right hand side is real, we have
\begin{equation*}
\begin{split}
\|hD_tu\|^2&\leqslant\Re\langle((hD_t)^2+t)u,u\rangle-\Re(ih^2D_tu(0)\bar{u}(0))\\
           &\leqslant\Re\langle((hD_t)^2+t)u,u\rangle+h^2|D_tu(0)||u(0)|\\
           &\leqslant\Re\langle((hD_t)^2+t)u,u\rangle+O(h^2)|D_tu(0)|^2+O(h^2)|u(0)|^2,
\end{split}
\end{equation*}
or
\begin{equation}
\label{ei:h1}
\Re\langle((hD_t)^2+t)u,u\rangle\geqslant\|hD_tu\|^2-O(h^2)|D_tu(0)|^2-O(h^2)|u(0)|^2.
\end{equation}
which is the analogue of \eqref{ei:dnh1} for general $u$. Next we try to get an analogue of \eqref{ei:dh0} and \eqref{ei:nh0}.

\begin{lem}
Suppose $u\in C_0^\infty([0,\infty))$, then we have the following estimate:
\begin{equation}
\label{eir:h0}
\Re\langle((hD_t)^2+t)u,u\rangle\geqslant\zeta_1'h^{\frac{2}{3}}(1-O(h^{\frac{2}{3}}))\|u\|^2-O(h^2)|D_tu(0)|^2,
\end{equation}
\begin{equation}
\label{eii:h0}
|\Im\langle((hD_t)^2+t)u,u\rangle|\leqslant O(h^{\frac{2}{3}})\Re\langle((hD_t)^2+t)u,u\rangle+O(h^2)|D_tu(0)|^2.
\end{equation}
\end{lem}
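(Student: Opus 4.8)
The plan is to view $(hD_t)^2+t$ on $[0,\infty)$ as a perturbation of its Neumann realization, for which the sharp bound \eqref{ei:nh0} is available, and to absorb the non-self-adjoint boundary contribution into errors of the permitted size. Write $q(u)=\langle((hD_t)^2+t)u,u\rangle$. A single integration by parts, as in the computation preceding \eqref{ei:h1}, gives
\begin{equation*}
q(u)=\|hD_tu\|^2+\|t^{\frac12}u\|^2+ih^2D_tu(0)\overline{u(0)},
\end{equation*}
so $\Re q(u)=\|hD_tu\|^2+\|t^{\frac12}u\|^2-h^2\Im\big(D_tu(0)\overline{u(0)}\big)$ and $\Im q(u)=h^2\Re\big(D_tu(0)\overline{u(0)}\big)$; in particular the imaginary part is purely the boundary term, so $|\Im q(u)|\leqslant h^2|D_tu(0)|\,|u(0)|$ and everything reduces to controlling this single quantity.

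The first step is a ``Neumann-type'' lower bound with no boundary hypothesis:
\begin{equation*}
\|hD_tu\|^2+\|t^{\frac12}u\|^2\geqslant\zeta_1'h^{\frac23}\big(1-O(h^{\frac23})\big)\|u\|^2-O(h^2)|D_tu(0)|^2 .
\end{equation*}
I would prove this via the decomposition $u=v+w$ with $w(t)=iD_tu(0)\,t\,\chi(t/h^{2/3})$, where $\chi\in C^\infty_c(\RR)$ equals $1$ near $0$; then $v=u-w\in C^\infty_0([0,\infty))$ satisfies $D_tv(0)=0$, so \eqref{ei:nh0} applies to $v$. The scale $h^{2/3}$ is chosen so that the elementary bounds $\|w\|^2=O(h^2)|D_tu(0)|^2$ and $\|hD_tw\|^2+\|t^{\frac12}w\|^2=O(h^{8/3})|D_tu(0)|^2$ make each cross term produced by expanding $\|hD_t(v+w)\|^2+\|t^{\frac12}(v+w)\|^2$ and by comparing $\|u\|^2$ with $\|v\|^2$ — after one use of $2ab\leqslant h^{\frac23}a^2+h^{-\frac23}b^2$ — exactly of order $O(h^2)|D_tu(0)|^2$, while costing only a factor $1-O(h^{2/3})$ on the main term. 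Because the constant $\zeta_1'$ must survive intact, the real work is the bookkeeping of which errors land in the $O(h^2)|D_tu(0)|^2$ bin versus the $1-O(h^{2/3})$ prefactor; together with the treatment of the boundary term in the next step, this is the main obstacle in the lemma.

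Next I would dispose of the boundary contribution. The only non-elementary ingredient is the trace inequality $|u(0)|^2\leqslant C\delta^{-1}\|u\|^2+\delta h^{-2}\|hD_tu\|^2$, valid for all $\delta>0$ (integrate $\partial_t(\psi(t/\delta)|u|^2)$ with $\psi(0)=1$ and $\psi$ supported near $0$, then use $2ab\leqslant a^2+b^2$); taking $\delta=h^{2/3}$ turns this into $h^2|u(0)|^2\leqslant Ch^{4/3}\|u\|^2+h^{2/3}\|hD_tu\|^2$. Combined with an application of $2ab\leqslant\varepsilon a^2+\varepsilon^{-1}b^2$ to $h^2|D_tu(0)|\,|u(0)|$, this yields
\begin{equation*}
h^2|D_tu(0)|\,|u(0)|\leqslant O(h^2)|D_tu(0)|^2+O(h^{4/3})\|u\|^2+O(h^{2/3})\|hD_tu\|^2 .
\end{equation*}
Substituting into the formula for $\Re q(u)$ and invoking the Neumann-type bound absorbs the $O(h^{4/3})\|u\|^2$ into the $1-O(h^{2/3})$ prefactor; the remaining $-O(h^{2/3})\|hD_tu\|^2$ is removed by feeding back the upper bound $\|hD_tu\|^2\leqslant(1+O(h^{2/3}))\Re q(u)+O(h^{4/3})\|u\|^2+O(h^2)|D_tu(0)|^2$, which follows from the same formula for $\Re q(u)$ together with the trace inequality (no circularity, since \eqref{eir:h0} is not used to derive it); the $\|u\|^2$ term it introduces is then of order $O(h^2)\|u\|^2$ and harmless. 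Solving the resulting inequality for $\Re q(u)$ gives \eqref{eir:h0}.

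Finally, \eqref{eii:h0} is a corollary of what has been assembled: from $|\Im q(u)|\leqslant h^2|D_tu(0)|\,|u(0)|$ and the displayed bound above, $|\Im q(u)|\leqslant O(h^2)|D_tu(0)|^2+O(h^{4/3})\|u\|^2+O(h^{2/3})\|hD_tu\|^2$, and it only remains to convert the last two terms into $O(h^{2/3})\Re q(u)$; this is done using \eqref{eir:h0} in the form $\|u\|^2\leqslant O(h^{-2/3})\Re q(u)+O(h^{4/3})|D_tu(0)|^2$ together with the upper bound on $\|hD_tu\|^2$ just mentioned. The thread running through the whole argument is that the non-self-adjointness produces the boundary term $ih^2D_tu(0)\overline{u(0)}$, hence forces $|u(0)|^2$ into every estimate, and $|u(0)|^2$ is not dominated by $\|u\|^2$ alone but only at the price of a $\|hD_tu\|^2$ term that must be recycled back through $\Re q(u)$; doing all of this without degrading the coefficient $\zeta_1'$ of $h^{2/3}\|u\|^2$ is the crux.
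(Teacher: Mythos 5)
Your argument is correct and reaches the same conclusion, but it takes a genuinely different route from the paper at the key step. Both proofs hinge on the same decomposition idea — write the function as a piece satisfying the Neumann condition plus a linear correction supported at scale $t\sim h^{2/3}$ (in the paper this appears after rescaling as $v(s)=w(s)+h a\,s\chi(s)$ with $D_sw(0)=0$). The paper, however, never separates out the boundary term $ih^2D_tu(0)\overline{u(0)}$ at all: it directly expands the sesquilinear form $\langle(D_s^2+s)v,v\rangle$ under the decomposition, and because the correction $s\chi(s)$ vanishes at $s=0$ while $w$ satisfies Neumann, every boundary contribution in the expansion vanishes identically. The diagonal term $\langle(D_s^2+s)w,w\rangle$ is manifestly real, and both $\eqref{eir:h0}$ and $\eqref{eii:h0}$ come out of Cauchy--Schwarz on the cross terms alone, with no trace inequality and no self-improving loop. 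You instead integrate by parts first, isolate the boundary term explicitly, prove a Neumann-type lower bound for the symmetric part $\|hD_tu\|^2+\|t^{1/2}u\|^2$, and then need a trace inequality $h^2|u(0)|^2\leqslant Ch^{4/3}\|u\|^2+h^{2/3}\|hD_tu\|^2$ together with a feedback step to recycle $\|hD_tu\|^2$ through $\Re q(u)$. This works, and the bookkeeping you describe closes correctly, but it costs you two extra ingredients (the trace inequality and the iteration) that the paper's expansion avoids; the paper's route is the shorter one precisely because the chosen cutoff vanishes at the endpoint, so there was never any $|u(0)|$ to control in the first place. Your way does have the mild advantage of making the mechanism ``boundary term forces $|u(0)|$, which is only controlled at the cost of $\|hD_tu\|^2$'' fully explicit, which is a useful way to see why $\zeta_1'$ survives with only an $O(h^{2/3})$ loss.
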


\begin{proof}

Write $D_tu(0)=a$ for simplicity. First, we use the scaling $t=h^{\frac{2}{3}}s$, $v(s)=h^{\frac{1}{3}}u(h^{\frac{2}{3}}s)$ as before. We have
\begin{equation*}
\langle((hD_t)^2+t)u,u\rangle_{L^2_t}=h^{\frac{2}{3}}\langle(D_s^2+s)v,v\rangle_{L^2_s},
\end{equation*}
and more importantly, $D_sv(0)=hD_tu(0)=ha$. Now let $v(s)=w(s)+has\chi(s)$ where $\chi\in C_0^\infty([0,\infty))$ is a fixed function such that $\chi\equiv 1$ near 0. Then we get the decomposition
\begin{equation*}
\begin{split} 
& \langle(D_s^2+s)v,v\rangle=\\ \ 
& \ \ \ \langle(D_s^2+s)w,w\rangle+ha\langle(D_s^2+s)s\chi,w\rangle
+h\bar{a}\langle(D_s^2+s)w,s\chi\rangle+h^2|a|^2\langle(D_s^2+s)s\chi,s\chi\rangle.
\end{split}
\end{equation*}

Since $w$ satisfies the Neumann condition $D_sw(0)=0$, we know the first term is real. We can integrate by parts to rewrite the third term as $-h\bar{a}\langle D_sw,D_s(s\chi)\rangle+h\bar{a}\langle w,s^2\chi\rangle$. Therefore for the real part, by the Cauchy-Schwarz inequality,
\begin{equation*}
\begin{split}
\Re\langle(D_s^2+s)v,v\rangle&\geqslant\langle(D_s^2+s)w,w\rangle-O(h)|a|\|D_sw\|-O(h)|a|\|w\|-O(h^2)|a|^2\\
                             &\geqslant\langle(D_s^2+s)w,w\rangle-O(h^{\frac{4}{3}})|a|^2-O(h^{\frac{2}{3}})(\|w\|^2+\|D_sw\|^2).
\end{split}
\end{equation*}
From
\begin{equation*}
\begin{split}
\|D_sw\|^2&= \langle D_sw,D_sw\rangle=\langle D_s^2w,w\rangle\leqslant\langle(D_s^2+s)w,w\rangle\\
\|w\|^2&\leqslant\zeta_1'^{-1}\langle(D_s^2+s)w,w\rangle\ \ \ \ \ \ (\mbox{by}\eqref{ei:n0}),
\end{split}
\end{equation*}
we have
\begin{equation}
\label{e:vtow}
\Re\langle(D_s^2+s)v,v\rangle\geqslant(1-O(h^{\frac{2}{3}}))\langle(D_s^2+s)w,w\rangle-O(h^{\frac{4}{3}})|a|^2.
\end{equation}

By \eqref{ei:n0} and
\begin{equation*}
\begin{split}
\|w\|^2&\geqslant (\|v\|-\|has\chi\|)^2=\|v\|^2-O(h)|a|\|v\|+O(h^2)|a|^2\\
       &\geqslant (1-O(h^{\frac{2}{3}}))\|v\|^2-O(h^{\frac{4}{3}})|a|^2,
\end{split}
\end{equation*}
we get
\begin{equation*}
\begin{split}
\Re\langle(D_s^2+s)v,v\rangle&\geqslant \zeta_1'(1-O(h^{\frac{2}{3}}))\|w\|^2-O(h^{\frac{4}{3}})|a|^2\\
                             &\geqslant \zeta_1'(1-O(h^{\frac{2}{3}}))\|v\|^2-O(h^{\frac{4}{3}})|a|^2.
\end{split}
\end{equation*}

For the imaginary part, we have
\begin{equation*}
\begin{split}
|\Im\langle(D_s^2+s)v,v\rangle|&\leqslant O(h^{\frac{4}{3}})|a|^2+O(h^{\frac{2}{3}})(\|w\|^2+\|D_sw\|^2)\\
                               &\leqslant O(h^{\frac{4}{3}})|a|^2+O(h^{\frac{2}{3}})\langle(D_s^2+s)w,w\rangle.
\end{split}
\end{equation*}
Using \eqref{e:vtow} again, we have
\begin{equation*}
|\Im\langle(D_s^2+s)v,v\rangle|\leqslant O(h^{\frac{4}{3}})|a|^2+O(h^{\frac{2}{3}})\Re\langle(D_s^2+s)v,v\rangle.
\end{equation*}
Scaling back from $v$ to $u$, we get the desired estimates \eqref{eir:h0} and \eqref{eii:h0}.
\end{proof}

Finally, we need an analogue of \eqref{eo:dnh2}. The argument in Section \ref{dn} shows that
\begin{equation*}
\|((hD_t)^2+t)u\|^2=\|(hD_t)^2u\|^2+\|tu\|^2+2\|t^{\frac{1}{2}}hD_tu\|^2+h\Re\frac{2}{i}\langle u,hD_tu\rangle.
\end{equation*}
Although the last term is no longer zero, we can calculate it through integration by parts. Since
\begin{equation*}
\langle u,hD_tu\rangle=\langle hD_tu,u\rangle-hi|u(0)|^2,
\end{equation*}
we have
\begin{equation*}
\Im\langle u,hD_tu\rangle=-\frac{h}{2}|u(0)|^2.
\end{equation*}
Therefore
\begin{equation*}
\Re\frac{2}{i}\langle u,hD_tu\rangle=-h|u(0)|^2,
\end{equation*}
and we get
\begin{equation}
\label{eo:h2}
\|((hD_t)^2+t)u\|^2\geqslant\|(hD_t)^2u\|^2-h^2|u(0)|^2
\end{equation}

\subsection{Restriction to a small interval}
\label{si}

Now we restrict the support of $u$ to a small fixed interval and get a better estimate.

\begin{lem}
If $L>0$ is sufficiently large, $0<h<h_0(L)$, then the following estimates holds uniformly for $u\in C_0^\infty([0,L^{-1}])$:
\begin{equation}
\label{eir:si}
\Re\langle((hD_t)^2+t)u,u\rangle\geqslant(\zeta_1'h^{\frac{2}{3}}-O(hL))\|u\|^2-O(h^2)|D_tu(0)|^2+\frac{L}{2}\|tu\|^2.
\end{equation}
\end{lem}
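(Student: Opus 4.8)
The plan is to move the new term $\tfrac{L}{2}\|tu\|^2$ into the quadratic form and then localize in $t$. Since $\langle((hD_t)^2+t)u,u\rangle-\tfrac{L}{2}\|tu\|^2=\langle((hD_t)^2+V)u,u\rangle$ with $V(t):=t-\tfrac{L}{2}t^2$, and $\tfrac t2\le V(t)\le t$ on $\supp u\subset[0,L^{-1}]$, the estimate \eqref{eir:si} is equivalent to
\begin{equation*}
\Re\langle((hD_t)^2+V)u,u\rangle\ge(\zeta_1'h^{2/3}-O(hL))\|u\|^2-O(h^2)|D_tu(0)|^2.
\end{equation*}
I would prove this by an IMS-type splitting at scale $M=h^{1/2}$: pick $\psi\in C^\infty([0,\infty))$ with $\psi\equiv0$ on $[0,M]$, $\psi\equiv\tfrac\pi2$ on $[2M,\infty)$, $|\psi'|\le CM^{-1}$, and set $\chi=\cos\psi$, $\tilde\chi=\sin\psi$, so $\chi^2+\tilde\chi^2\equiv1$, $\chi\equiv1$ and $\tilde\chi\equiv0$ near $0$, and $|\chi'|^2+|\tilde\chi'|^2=|\psi'|^2\le CM^{-2}$. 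The localization formula gives
\begin{equation*}
\Re\langle((hD_t)^2+V)u,u\rangle=\Re\langle((hD_t)^2+V)(\chi u),\chi u\rangle+\Re\langle((hD_t)^2+V)(\tilde\chi u),\tilde\chi u\rangle-h^2\langle(|\chi'|^2+|\tilde\chi'|^2)u,u\rangle,
\end{equation*}
and with $M=h^{1/2}$ the last (commutator) term is $\ge-Ch\|u\|^2=-O(hL)\|u\|^2$.

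For the near piece, $\chi u\in C_0^\infty([0,\infty))$ with $D_t(\chi u)(0)=D_tu(0)$ since $\chi\equiv1$ near $0$, and $\supp(\chi u)\subset[0,2M]$, where $V\ge t-\tfrac L2(2M)^2=t-2LM^2$; hence $\Re\langle((hD_t)^2+V)(\chi u),\chi u\rangle\ge\Re\langle((hD_t)^2+t)(\chi u),\chi u\rangle-2LM^2\|\chi u\|^2$, and \eqref{eir:h0} applied to $\chi u$, together with $2LM^2=2Lh$ and $\zeta_1'h^{2/3}\cdot O(h^{2/3})=O(h^{4/3})$ (both $O(hL)$ for $h<h_0(L)$), gives $\Re\langle((hD_t)^2+V)(\chi u),\chi u\rangle\ge(\zeta_1'h^{2/3}-O(hL))\|\chi u\|^2-O(h^2)|D_tu(0)|^2$. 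For the far piece, $\tilde\chi u$ vanishes near $0$, so there is no boundary term and $\Re\langle((hD_t)^2+V)(\tilde\chi u),\tilde\chi u\rangle=\|hD_t(\tilde\chi u)\|^2+\langle V(\tilde\chi u),\tilde\chi u\rangle\ge\langle V(\tilde\chi u),\tilde\chi u\rangle$; since $\supp(\tilde\chi u)\subset[M,L^{-1}]$ (taking $h_0(L)\le L^{-2}$ so that $M<L^{-1}$) and $V\ge\tfrac t2\ge\tfrac M2$ there, this is $\ge\tfrac{h^{1/2}}2\|\tilde\chi u\|^2\ge\zeta_1'h^{2/3}\|\tilde\chi u\|^2$ for $h$ small, because $h^{1/2}/h^{2/3}=h^{-1/6}\to\infty$. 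Adding the two pieces together with the $O(hL)$ commutator error and using $\|\chi u\|^2+\|\tilde\chi u\|^2=\|u\|^2$ yields the displayed bound, i.e.\ \eqref{eir:si}.

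I expect the main obstacle to be precisely the point that forces the localization: one cannot merely use $V\ge\tfrac t2$ on all of $\supp u$ and invoke an Airy lower bound, since the Neumann ground-state energy of $(hD_t)^2+\tfrac t2$ is $2^{-2/3}\zeta_1'h^{2/3}<\zeta_1'h^{2/3}$. Only on $t\lesssim M$ is $V$ an $O(LM^2)$ perturbation of $t$, so that \eqref{eir:h0} carries over with loss $O(hL)$; on $t\gtrsim M$ the zeroth-order term alone dominates $\zeta_1'h^{2/3}$. The scale $M$ must lie in the window $\sqrt{h/L}\lesssim M\lesssim h^{1/3}$ — large enough that the commutator error $h^2M^{-2}$ is $O(hL)$, small enough that $LM^2$ is $O(hL)$ — and $M=h^{1/2}$ is a convenient choice inside it; beyond that, the only delicate point is keeping track of which error terms are genuinely $O(hL)$ rather than the coarser $O(h^{2/3})$.
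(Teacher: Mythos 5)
Your proposal is correct and matches the paper's argument: both perform an IMS localization at scale $h^{1/2}$, apply the general (Robin/Neumann-type) Airy lower bound \eqref{eir:h0} to the near piece where the cut-off equals $1$, treat the far piece by dropping $\|hD_t\cdot\|^2\geqslant 0$ and using the pointwise size of the potential for $t\geqslant h^{1/2}$, and absorb the commutator error $h^2\sum|\chi_j'|^2=O(h)$. The only cosmetic difference is that you fold $\tfrac{L}{2}\|tu\|^2$ into a modified potential $V=t-\tfrac{L}{2}t^2$ and bound $V\geqslant t/2\geqslant h^{1/2}/2$, whereas the paper keeps the term explicit and verifies the equivalent pointwise inequality $t\geqslant\zeta_1'h^{2/3}-c_0h+\tfrac{L}{2}t^2$ on $[h^{1/2},L^{-1}]$.
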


\begin{proof} Choose $\chi_0,\chi_1\in C^\infty(\RR)$ such that
\begin{equation*}
\begin{split}
\chi_0^2+\chi_1^2&=1, 0\leqslant\chi_j\leqslant1\\
\chi_0&\equiv1\ \mbox{ on }(-\infty,h^{\frac{1}{2}}]\\
\chi_1&\equiv1\ \mbox{ on }[2h^{\frac{1}{2}},\infty)\\
\partial^\alpha\chi_j&=O_\alpha(h^{-\frac{\alpha}{2}}),\alpha=0,1,2.
\end{split}
\end{equation*}
Then we can deduce that
\begin{equation*}
\chi_0[\chi_0,(hD_t)^2]+\chi_1[\chi_1,(hD_t)^2]=-[\chi_0(hD_t)^2(\chi_0)+\chi_1(hD_t)^2(\chi_1)]=O(h),
\end{equation*}
from which we have
\begin{equation*}
\begin{split}
\langle((hD_t)^2+t)u,u\rangle=&\ \langle\chi_0((hD_t)^2+t)u,\chi_0u\rangle+\langle\chi_1((hD_t)^2+t)u,\chi_1u\rangle\\
                             =&\ \langle((hD_t)^2+t)\chi_0u,\chi_0u\rangle+\langle((hD_t)^2+t)\chi_1u,\chi_1u\rangle\\
                             &-\langle(\chi_0[\chi_0,(hD_t)^2]+\chi_1[\chi_1,(hD_t)^2])u,u\rangle.
\end{split}
\end{equation*}
Since $\chi_1u(0)=0$, also $\chi_0u$ and $u$ have the same condition at $t=0$, we can apply the estimates \eqref{ei:dnht} and \eqref{eir:h0} to get
\begin{equation*}
\Re\langle((hD_t)^2+t)u,u\rangle
\geqslant\zeta_1'h^{\frac{2}{3}}(1-O(h^{\frac{2}{3}}))\|\chi_0u\|^2+\|t^{\frac{1}{2}}\chi_1u\|^2-O(h^2)|D_tu(0)|^2-O(h)\|u\|^2.
\end{equation*}
From the construction of $\chi_0,\chi_1$, it is easy to see
\begin{equation*}
\|u\|^2=\|\chi_0u\|^2+\|\chi_1u\|^2, \|tu\|^2=\|t\chi_0u\|^2+\|t\chi_1u\|^2.
\end{equation*}
Therefore we only need to prove for some $c_0=O(L)$, we have
\begin{equation}
\label{chi0}
c_0h\|\chi_0u\|^2\geqslant\frac{L}{2}\|t\chi_0u\|^2
\end{equation}
and
\begin{equation}
\label{chi1}
\|t^{\frac{1}{2}}\chi_1u\|^2\geqslant(\zeta h^{\frac{2}{3}}-c_0h)\|\chi_1u\|^2+\frac{L}{2}\|t\chi_1u\|^2.
\end{equation}

Since $\chi_0$ is supported on $[-\infty,2h^{\frac{1}{2}}]$, we only need to choose $c_0=2L$ to get \eqref{chi0}. To prove \eqref{chi1}, we need to show that for $t\in[h^{\frac{1}{2}},L^{-1}]$, $t\geqslant\zeta h^{\frac{2}{3}}-c_0h+\frac{L}{2}t^2$ or equivalently,
\begin{equation*}
\frac{L}{2}(t-L^{-1})^2\leqslant\frac{1}{2L}+2hL-\zeta_1h^{\frac{2}{3}}.
\end{equation*}
The left hand side achieves its maximum at $t=h^{\frac{1}{2}}$, so we only need
\begin{equation*}
h^{\frac{1}{2}}\geqslant\zeta h^{\frac{2}{3}}-c_0h+\frac{L}{2}h
\end{equation*}
which can be achieved by choosing $h<h_0(L)$ small enough.
\end{proof}

\subsection{Airy operator with lower order terms}
\label{aol}

Now we shall include the lower order terms and prove the main result of this section.

\begin{prop}
Suppose that a second order ordinary differential operator on $[0,\infty)$ satisfies that
\begin{equation*}
P=e^{-2\pi i/3}((hD_t)^2+t)+O(h)hD_t+O(h+h^{\frac{1}{2}}t+t^2)
\end{equation*}
Let $\omega_0\in\mathbb{C}$ with $\arg\omega_0\in(-\frac{1}{6}\pi,\frac{5}{6}\pi)$. If $L>0$ is sufficiently large, $h>0$ sufficiently small depending on $L$ and $\omega_0$, then for $u\in C_0^\infty([0,L^{-1}))$,
\begin{equation}
\label{lowairy}
\begin{split}
\|(P-\omega_0)u\|^2\geqslant&\ (|e^{2\pi i/3}\omega_0-\zeta_1'h^{\frac{2}{3}}|^2-O(hL))\|u\|^2\\
                            &+C^{-1}\|(hD_t)^2u\|^2-O(h^2)|D_tu(0)|^2-O(h^2)|u(0)|^2.
\end{split}
\end{equation}
\end{prop}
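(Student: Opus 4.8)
The plan is to treat $P$ as a perturbation of $e^{-2\pi i/3}((hD_t)^2+t)$ and reduce \eqref{lowairy} to the estimates of Sections \ref{dn}--\ref{si}. First I would write $P = e^{-2\pi i/3}((hD_t)^2+t) + E$, where the error term $E = O(h)hD_t + O(h+h^{1/2}t+t^2)$, and set $\omega_1 = e^{2\pi i/3}\omega_0$ so that $e^{2\pi i/3}(P-\omega_0) = (hD_t)^2 + t - \omega_1 + e^{2\pi i/3}E$. Since multiplication by $e^{2\pi i/3}$ is an isometry on $L^2$, proving \eqref{lowairy} is equivalent to bounding $\|((hD_t)^2+t-\omega_1)u + e^{2\pi i/3}Eu\|^2$ from below. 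The key point is that $\arg\omega_0 \in (-\pi/6, 5\pi/6)$ translates into $\arg\omega_1 \in (\pi/2, 3\pi/2)$, i.e. $\Re\omega_1 \leqslant 0$ up to the real-positive boundary cases; more precisely $-\omega_1$ has argument in $(-\pi/2,\pi/2)$, so $\Re(-\omega_1) \geqslant c|\omega_1|$ for a fixed $c>0$ (uniformly on compact sub-sectors, which is why $h_0$ is allowed to depend on $\omega_0$). This sign is exactly what makes $-\Re\omega_1$ combine constructively with the positive lower bound $\zeta_1'h^{2/3}$ coming from \eqref{eir:si}.

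Next I would establish the model estimate with spectral parameter: for $u\in C_0^\infty([0,L^{-1}))$,
\begin{equation*}
\|((hD_t)^2+t-\omega_1)u\|^2 \geqslant (|\omega_1 - \zeta_1'h^{2/3}|^2 - O(hL))\|u\|^2 + C^{-1}\|(hD_t)^2u\|^2 - O(h^2)|D_tu(0)|^2 - O(h^2)|u(0)|^2.
\end{equation*}
To get this, expand $\|((hD_t)^2+t-\omega_1)u\|^2 = \|((hD_t)^2+t)u\|^2 - 2\Re(\bar\omega_1\langle((hD_t)^2+t)u,u\rangle) + |\omega_1|^2\|u\|^2$. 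For the cross term, write $\Re(\bar\omega_1\langle((hD_t)^2+t)u,u\rangle) = (\Re\omega_1)\Re\langle\cdot\rangle + (\Im\omega_1)\Im\langle\cdot\rangle$ and control $\Im\langle\cdot\rangle$ by \eqref{eii:h0}; since $\Re\omega_1 \leqslant 0$ (or $O(h^{2/3})|\omega_1|$ in the worst boundary direction, absorbed into the $O(hL)$-type slack after noting $|\omega_1|\sim h^{2/3}$ is the relevant scale) the term $-2(\Re\omega_1)\Re\langle\cdot\rangle \geqslant -2(\Re\omega_1)\zeta_1'h^{2/3}\|u\|^2 + \ldots$ has the right sign, and \eqref{eir:si} gives the lower bound $\Re\langle((hD_t)^2+t)u,u\rangle \geqslant (\zeta_1'h^{2/3}-O(hL))\|u\|^2 - O(h^2)|D_tu(0)|^2 + \tfrac{L}{2}\|tu\|^2$. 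Combining and completing the square produces the factor $|\omega_1 - \zeta_1'h^{2/3}|^2$; the $\frac{L}{2}\|tu\|^2$ term is held in reserve, as is \eqref{eo:h2} for the $\|(hD_t)^2u\|^2$ contribution (one spends a small fixed fraction, say a half, of $\|((hD_t)^2+t)u\|^2$ on \eqref{eo:h2} and keeps the rest for the completion of the square). One must also use \eqref{ei:h1} to bound $\|hD_tu\|$, which will be needed to absorb the part of $Eu$ of the form $O(h)hD_tu$.

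Finally I would handle the perturbation $e^{2\pi i/3}Eu$ by the triangle inequality in the form $\|A u + Eu\|^2 \geqslant (1-\delta)\|Au\|^2 - \delta^{-1}\|Eu\|^2$ for a small fixed $\delta$, and estimate $\|Eu\| \leqslant O(h)\|hD_tu\| + O(h)\|u\| + O(h^{1/2})\|tu\| + O(1)\|t^2u\|$. On $\supp u \subset [0,L^{-1}]$ one has $\|t^2u\| \leqslant L^{-1}\|tu\|$ and $\|tu\|^2$ is dominated by the reserved $\frac{L}{2}\|tu\|^2$ term (this is where the $O(hL)$ rather than $O(h)$ loss in the $\|u\|^2$-coefficient originates, and why $L$ is taken large but fixed before $h\to0$); $\|hD_tu\|^2$ is dominated via \eqref{ei:h1} by $\Re\langle((hD_t)^2+t)u,u\rangle$ plus boundary terms, hence by the main term; and $\|(hD_t)^2u\|^2$ from $Eu$ does not occur since $E$ is first order in $hD_t$. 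I expect the main obstacle to be the bookkeeping of the two competing large parameters: one must order the choices as "fix $\delta$ small, then $L$ large, then $h$ small," and carefully check that every perturbative error is either $O(hL)\|u\|^2$ (allowed), or absorbable into a fixed small fraction of $\|(hD_t)^2u\|^2$ or of the reserved $\frac{L}{2}\|tu\|^2$, without ever needing $\|tu\|^2$ or $\|hD_tu\|^2$ on the left with a favorable constant that one has already spent. The boundary terms $|u(0)|^2$ and $|D_tu(0)|^2$ are simply carried along unchanged, as they appear with $O(h^2)$ coefficients on both sides.
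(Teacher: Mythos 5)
Your overall reading of the problem matches the paper's: write $P=e^{-2\pi i/3}((hD_t)^2+t)+E$, treat $e^{-2\pi i/3}((hD_t)^2+t)-\omega_0$ (equivalently $(hD_t)^2+t-e^{2\pi i/3}\omega_0$, as you note) by expanding the square and feeding \eqref{eir:si}, \eqref{eii:h0} into the cross term, use the sign condition $\Re(e^{2\pi i/3}\omega_0)<0$ forced by $\arg\omega_0\in(-\tfrac{\pi}{6},\tfrac{5\pi}{6})$, keep $\tfrac{L}{2}\|tu\|^2$ in reserve, and finish with \eqref{eo:h2}. This is essentially the paper's argument, and your bookkeeping observations (order $\delta$, then $L$, then $h$; $\|t^2u\|\leqslant L^{-1}\|tu\|$ on the support; \eqref{ei:h1} for $\|hD_tu\|$) are the right ones.

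However, the way you propose to absorb $E$ has a genuine gap. The Peter--Paul inequality $\|Au+Eu\|^2\geqslant(1-\delta)\|Au\|^2-\delta^{-1}\|Eu\|^2$ with a fixed small $\delta$ multiplies \emph{all} of the lower bound for $\|Au\|^2$ by $(1-\delta)$, including the leading coefficient $|e^{2\pi i/3}\omega_0-\zeta_1'h^{2/3}|^2\sim|\omega_0|^2=O(1)$. The resulting deficit $\delta|\omega_0|^2\|u\|^2$ is a fixed multiple of $\|u\|^2$, not $O(hL)\|u\|^2$, so the conclusion \eqref{lowairy} is not obtained. Nor can $\delta$ be sent to $0$ with $h$: to keep the loss on $\|u\|^2$ at $O(hL)$ one needs $\delta\lesssim hL$, while to dominate $\delta^{-1}O(1)\|t^2u\|^2\leqslant\delta^{-1}L^{-2}\|tu\|^2$ by the reserved $\sim L\|tu\|^2$ one needs $\delta\gtrsim L^{-3}$; for fixed $L$ these are incompatible once $h<L^{-4}$. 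A fixed $(1-\delta)$ loss is fatal for the application in Section \ref{pf}, where the estimate is used for $\Re\omega_0\approx1$ and the leading coefficient must come out as $r_0+O(h^{2/3})$ with no constant deficit.

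The paper sidesteps this by \emph{not} using a global Young inequality. It keeps $2\Re\langle(e^{-2\pi i/3}((hD_t)^2+t)-\omega_0)u,Eu\rangle$ intact and estimates the pieces with weights adapted to their structure. The dangerous piece involving $t^2$ and $(hD_t)^2+t$ is bounded by $\tfrac12\|((hD_t)^2+t)u\|^2+O(1)\|t^2u\|^2$, so it only spends a fixed fraction of the \emph{bonus} quantity $\|((hD_t)^2+t)u\|^2$, never of $|\omega_0|^2\|u\|^2$; the piece involving $\omega_0u$ and $t^2u$ is bounded directly via $\langle u,t^2u\rangle=\|tu\|^2$ (non-negativity, no Cauchy--Schwarz), giving $O(h)\|u\|^2+O(1)\|tu\|^2$. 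This term-by-term accounting is exactly what keeps the loss on the $\|u\|^2$ coefficient at $O(hL)$ while still absorbing $\|Eu\|$. You would need to replace your global Peter--Paul step with this (or an equivalent weighted) splitting of the cross term to close the argument.
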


\begin{proof}
We begin with the following identity
\begin{equation*}
\begin{split}
\|(P-\omega_0)u\|^2=&\ \|(e^{-\frac{2\pi i}{3}}((hD_t)^2+t)-\omega_0)u\|^2+\|[O(h)hD_t+O(h+h^{\frac{1}{2}}t+t^2)]u\|^2\\
                   &+2\Re\langle(e^{-\frac{2\pi i}{3}}((hD_t)^2+t)-\omega_0)u,[O(h)hD_t+O(h+h^{\frac{1}{2}}t+t^2)]u\rangle\\
                   \geqslant&\ \|(e^{-\frac{2\pi i}{3}}((hD_t)^2+t)-\omega_0)u\|^2-[O(h)\langle((hD_t)^2+t)u,hD_tu\rangle\\
                   &+O(h)\langle u,hD_tu\rangle+\langle((hD_t)^2+t)u,O(h+t^2)u\rangle+\langle u,O(h+t^2)u\rangle].
\end{split}
\end{equation*}

The lower order terms are estimated as follows,
\begin{equation*}
\begin{split}
O(h)\langle((hD_t)^2+t)u,hD_tu\rangle\leqslant&\ O(h)\|((hD_t)^2+t)u\|^2+O(h)\|hD_tu\|^2\\
                                     \leqslant&\ O(h)\|((hD_t)^2+t)u\|^2+O(h)\Re\langle((hD_t)^2+t)u,u\rangle\\
                                     &+O(h^3)|D_tu(0)|^2+O(h^3)|u(0)|^2  \ \ \text{ (by \eqref{ei:h1})};\\
O(h)\langle u,hD_tu\rangle           \leqslant&\ O(h)\|u\|^2+O(h)\|hD_tu\|^2\\
                                     \leqslant&\ O(h)\|u\|^2+O(h)\Re\langle((hD_t)^2+t)u,u\rangle\\
                                     & +O(h^3)|D_tu(0)|^2+O(h^3)|u(0)|^2 \ \ \text{ (by \eqref{ei:h1})};\\
\langle((hD_t)^2+t)u,O(h+t^2)u\rangle\leqslant&\ O(h)\langle((hD_t)^2+t)u,u\rangle+O(1)\langle((hD_t)^2+t)u,t^2u\rangle\\
                                     \leqslant&\ O(h)\|((hD_t)^2+t)u\|^2+O(h)\|u\|^2\\
                                     & +\textstyle{\frac{1}{2}}\|((hD_t)^2+t)u\|^2+O(1)\|t^2u\|^2,\\
\langle u,O(h+t^2)u\rangle \leqslant&\ O(h)\|u\|^2+O(1)\|tu\|^2.
\end{split}
\end{equation*}

For the leading terms, we use the following identities
\begin{equation*}
\begin{split}
\|(e^{-\frac{2\pi i}{3}}((hD_t)^2+t)-\omega_0)u\|^2=&\ \|((hD_t)^2+t)u\|^2+|\omega_0|^2\|u\|^2\\
&-2\Re\langle e^{-\frac{2\pi i}{3}}((hD_t)^2+t)u,\omega_0u\rangle,
\end{split}
\end{equation*}
and
\begin{equation*}
\begin{split}
-2\Re\langle e^{-\frac{2\pi i}{3}}((hD_t)^2+t)u,\omega_0u\rangle=&\ 2\Re[e^{\frac{\pi i}{3}}\bar{\omega}_0\langle((hD_t)^2+t)u,u\rangle]\\
=&\ \Re(2e^{\frac{\pi i}{3}}\bar{\omega}_0)\cdot\Re\langle((hD_t)^2+t)u,u\rangle\\
 &-\Im(2e^{\frac{\pi i}{3}}\bar{\omega_0})\cdot\Im\langle((hD_t)^2+t)u,u\rangle.
\end{split}
\end{equation*}

By \eqref{eii:h0}, the second term is bounded below by
\begin{equation*}
-2|\omega_0||\Im\langle((hD_t)^2+t)u,u\rangle|\geqslant-O(h^{\frac{2}{3}})\Re\langle((hD_t)^2+t)u,u\rangle
-O(h^2)|D_tu(0)|^2.
\end{equation*}
Therefore
\begin{equation*}
\begin{split}
&-2\Re\langle e^{-\frac{2\pi i}{3}}((hD_t)^2+t)u,\omega_0u\rangle\\
&\ \ \ \ \ \ \geqslant(2\cos(\textstyle{\frac{\pi}{3}}-\arg\omega_0)-O(h^{\frac{2}{3}}))|\omega_0|\Re\langle((hD_t)^2+t)u,u\rangle-O(h^2)|D_tu(0)|^2.
\end{split}
\end{equation*}

Now combining all the terms together, we get the following estimate
\begin{equation*}
\begin{split}
\|(P-\omega_0)u\|^2\geqslant &
(2\cos(\textstyle{\frac{\pi}{3}}-\arg\omega_0)-O(h^{\frac{2}{3}}))|\omega_0|\Re\langle((hD_t)^2+t)u,u\rangle\\
&+(|\omega_0|^2-O(h))\|u\|^2+(\textstyle{\frac{1}{2}}-O(h))\|((hD_t)^2+t)u\|^2\\
&-O(1)\|tu\|^2-O(1)\|t^2u\|^2-O(h^2)|D_tu(0)|^2-O(h^3)|u(0)|^2.
\end{split}
\end{equation*}

Since $|\frac{\pi}{3}-\arg\omega_0|<\frac{\pi}{2}, \cos(\frac{\pi}{3}-\arg\omega_0)>0$, when $h$ is small enough, we can apply \eqref{eir:si} to the first term and use
\begin{equation*}
|\omega_0|^2-2\Re(e^{-\frac{2\pi i}{3}}\bar{\omega_0})\zeta_1h^{\frac{2}{3}}=|e^{2\pi i/3}\omega_0-\zeta_1h^{\frac{2}{3}}|^2-O(h)
\end{equation*}
to get
\begin{equation*}
\begin{split}
\|(P-\omega_0)u\|^2 \geqslant & \left((|e^{\frac{2\pi i}{3}}\omega_0-\zeta_1h^{\frac{2}{3}}|^2-O(hL)\right)\|u\|^2
 +\left(\textstyle{\frac{1}{2}}-O(h) \right)\|((hD_t)^2+t)u\|^2\\
& \ \ +\left(|\omega_0|\cos(\textstyle{\frac{\pi}{3}}-\arg\omega_0)-O(h))L-O(1) \right) \|tu\|^2\\
& \ \ \ \  -O(1)\|t^2u\|^2-O(h^2)|D_tu(0)|^2-O(h^3)|u(0)|^2.
\end{split}
\end{equation*}

Since $u$ is supported in $[0,L^{-1}]$, we have $\|t^2u\|^2\leqslant L^{-2}\|tu\|^2$. So if $L>L_0(\omega_0)$ large enough, $h<h_0(L,\omega_0)$ small enough, we get
\begin{equation*}
\begin{split}
\|(P-\omega_0)u\|^2\geqslant&\ (|e^{2\pi i/3}\omega_0-\zeta_1'h^{\frac{2}{3}}|^2-O(hL))\|u\|^2+\left(\textstyle{\frac{1}{2}}-O(h)\right)\|((hD_t)^2+t)u\|^2\\
                            &+C^{-1}L\|tu\|^2-O(h^2)|D_tu(0)|^2-O(h^3)|u(0)|^2.
\end{split}
\end{equation*}
Applying \eqref{eo:h2}, we conclude the proof of \eqref{lowairy}.
\end{proof}

\begin{rem}
If we replaced $(hD_t)^2+t$ by $(hD_t)^2+Qt$, then the estimates (with $\zeta_1'h^{\frac{2}{3}}$ replaced by $\zeta_1'Q^{\frac{2}{3}}h^{\frac{2}{3}}$) remain uniform for $Q\in[C^{-1},C]$, $|\omega_0|\in[C^{-1},C]$ and $\arg(\omega_0)\in[-\frac{\pi}{6}+\delta,\frac{5\pi}{6}-\delta]$ ($\delta,C>0$)
\end{rem}
\begin{rem}
For the Dirichlet and Neumann realization, we can get the same estimate without the last two lower order terms $-O(h^2)|D_tu(0)|^2-O(h^2)|u(0)|^2$ based on the inequalities in Section \ref{dn}. For Dirichlet realization, we can also improve $\zeta_1'$ to $\zeta_1$.
\end{rem}

\section{Lower bounds on the scaled operator}
\label{lb}

In section \ref{cs}, we defined the scaled operator $P=-\Delta|_\Gamma$ and find the explicit formula in normal coordinates with respect to the boundary: $\Omega\ni x\mapsto(y,t)\in X\times(0,\infty)$. If we freeze $(y,\eta)\in T^\ast X$, then the (semiclassical) symbol of $h^2P$ is given by
\begin{equation}
\label{symbol}
P(y,t;\eta,hD_y)=e^{-\frac{2\pi i}{3}}((hD_t)^2+2tQ(y,\eta))+R(y,\eta)+O(t^2+h)\langle\eta\rangle^2+O(h)hD_t.
\end{equation}
In this section, we first estimate $P(y,t;\eta,hD_t)-\omega_0$ and then through the FBI transform introduced in Section \ref{fbi} to get a lower bound on $h^2P-\omega_0$.

\subsection{Estimate in the glancing region}
\label{gl}

When $\Re\omega_0-R(y,\eta)$ is small, the main part of $P(y,t;\eta,hD_t)-\omega_0$ is given by the Airy-type operator $e^{-\frac{2\pi i}{3}}((hD_t)^2+2tQ(y,\eta))$. We shall apply our results in Section \ref{airy} to get the following estimate. Notice that such $(y,\eta)$ lies in a compact subset of $T^\ast X$.

\begin{lem}
Let $\omega_0\in\mathbb{C}$ with $\Re\omega_0>0,\Im\omega_0=r_0>0$. Suppose $|\Re\omega_0-R(x',\xi')|<c$ where $c$ is sufficiently small, $L$ is large enough and $0<h<h_0(L)$. Then For any $v\in C_0^\infty([0,L^{-1}))$, we have
\begin{equation}
\label{e:gl}
\begin{split}
\|(P(y,t,\eta,hD_t)-\omega_0)v\|^2\geqslant|r_0+2S(\Re\omega_0)^\frac{2}{3}h^{\frac{2}{3}}-O(h)|^2\|v\|^2\\
+C^{-1}\|(hD_t)^2v\|^2-O(h^2)|D_tv(0)|^2-O(h^2)|v(0)|^2
\end{split}
\end{equation}
where $S$ is given by
\begin{equation}
\label{const}
S=2^{-\frac{1}{3}}\cos\left(\frac{\pi}{6}\right)\zeta_1'\left(\min_{y\in X,j=1,\ldots,n-1}K_j(y)\right)^{\frac{2}{3}}.
\end{equation}
$K_j(y)$ are the principal curvatures of $X=\partial\mathcal{O}$ at $y$ and $-\zeta_1'$ is the first zero of the derivative of Airy function.
\end{lem}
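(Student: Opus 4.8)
The plan is to recognize $P(y,t,\eta,hD_t)-\omega_0$, for each frozen $(y,\eta)$ in the glancing set $\{\,|\Re\omega_0-R(y,\eta)|<c\,\}$, as an instance of the Airy-type operator with lower order terms treated in Section~\ref{aol}, and then to read off \eqref{e:gl} from the proposition there after an elementary computation of the constant. From \eqref{symbol} one writes
\begin{equation*}
P(y,t,\eta,hD_t)-\omega_0=\Bigl(e^{-\frac{2\pi i}{3}}\bigl((hD_t)^2+2Q(y,\eta)\,t\bigr)+O(h)hD_t+O(h+t^2)\Bigr)-\omega_0',\qquad\omega_0':=\omega_0-R(y,\eta),
\end{equation*}
which is exactly the operator of Section~\ref{aol} with the harmonic term $t$ replaced by $2Q(y,\eta)\,t$ and spectral parameter $\omega_0'$. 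The preparatory points are: (i)~the admissible $(y,\eta)$ form a fixed compact subset of $T^{*}X$ (as noted before the lemma) on which $R(y,\eta)$ is comparable to $\Re\omega_0$, hence bounded above and away from $0$, $\omega_0$ ranging over the bounded region of interest; (ii)~by strict convexity, since the principal curvatures $K_j(y)$ are the eigenvalues of $Q$ relative to $R$, one has $(\min_{y,j}K_j)\,R(y,\eta)\le Q(y,\eta)\le(\max_{y,j}K_j)\,R(y,\eta)$, so $2Q(y,\eta)$ stays in a fixed compact subinterval of $(0,\infty)$; and (iii)~the term $O(t^2+h)\langle\eta\rangle^2+O(h)hD_t$ is, uniformly over this compact set, of the form $O(h)hD_t+O(h+t^2)$ admitted in Section~\ref{aol}. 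Granting these, the proposition of Section~\ref{aol} together with the remark on uniformity in the harmonic coefficient gives
\begin{equation*}
\|(P(y,t,\eta,hD_t)-\omega_0)v\|^2\ge\Bigl(\bigl|e^{\frac{2\pi i}{3}}\omega_0'-\zeta_1'\bigl(2Q(y,\eta)\bigr)^{\frac23}h^{\frac23}\bigr|^2-O(hL)\Bigr)\|v\|^2+C^{-1}\|(hD_t)^2v\|^2-O(h^2)|D_tv(0)|^2-O(h^2)|v(0)|^2,
\end{equation*}
provided $\arg\omega_0'$ lies in a fixed subsector of $(-\tfrac{\pi}{6},\tfrac{5\pi}{6})$ and $|\omega_0'|$ is bounded below.

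Since $\Im\omega_0'=r_0>0$ and $|\Re\omega_0'|<c$, the argument condition holds except in the corner where $R(y,\eta)>\Re\omega_0$ with $\Re\omega_0-R(y,\eta)$ comparable to $r_0$ (so $\arg\omega_0'$ near $\tfrac{5\pi}{6}$ and $|\omega_0'|$ possibly small). There I would argue directly from $|\Im\langle(P(y,t,\eta,hD_t)-\omega_0)v,v\rangle|$ and the numerical range of $P(y,t,\eta,hD_t)$: the $e^{-2\pi i/3}$-rotation feeds the real, positive Airy ground-state energy --- which the $2Q(y,\eta)$-scaled forms of \eqref{eir:h0} and \eqref{eii:h0} bound below by $\cos\tfrac{\pi}{6}\,\zeta_1'(2Q(y,\eta))^{2/3}h^{2/3}\|v\|^2$ up to $O(h^{4/3})\|v\|^2$ --- into the imaginary part with the same sign as $r_0\|v\|^2$, and in this corner $R(y,\eta)>\Re\omega_0$ makes $(2Q(y,\eta))^{2/3}\ge(2(\min_{y,j}K_j)\Re\omega_0)^{2/3}$, so passing to $(\Re\omega_0)^{2/3}$ loses nothing; the $\|(hD_t)^2v\|^2$-term is recovered, as in Section~\ref{aol}, by retaining $\|((hD_t)^2+2Q(y,\eta)t)v\|^2$ with coefficient $\tfrac12-O(h)$ from the expansion identity and applying \eqref{eo:h2}. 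Either way the displayed bound holds throughout the glancing set.

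It then remains to turn $\bigl|e^{\frac{2\pi i}{3}}\omega_0'-\zeta_1'(2Q)^{2/3}h^{2/3}\bigr|^2$ into the quantity in \eqref{e:gl}. Factoring $|e^{2\pi i/3}|=1$ and using $e^{-2\pi i/3}=-\tfrac12-i\cos\tfrac{\pi}{6}$,
\begin{equation*}
\bigl|e^{\tfrac{2\pi i}{3}}\omega_0'-\zeta_1'(2Q)^{\tfrac23}h^{\tfrac23}\bigr|^2=\Bigl(\Re\omega_0-R(y,\eta)+\tfrac12\zeta_1'(2Q)^{\tfrac23}h^{\tfrac23}\Bigr)^2+\Bigl(r_0+\cos\tfrac{\pi}{6}\,\zeta_1'(2Q)^{\tfrac23}h^{\tfrac23}\Bigr)^2.
\end{equation*}
By~(ii), $\cos\tfrac{\pi}{6}\,\zeta_1'(2Q(y,\eta))^{2/3}\ge2S\,R(y,\eta)^{2/3}$ with $S$ as in \eqref{const} --- the factor $2^{-1/3}$ in \eqref{const} being precisely the coefficient $2$ of $Q(y,\eta)t$ passed through the $\tfrac23$-power. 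When $R(y,\eta)\ge\Re\omega_0$ this already gives $\cos\tfrac{\pi}{6}\,\zeta_1'(2Q)^{2/3}h^{2/3}\ge2S(\Re\omega_0)^{2/3}h^{2/3}$, so \eqref{e:gl} follows on dropping the first square. When $a:=\Re\omega_0-R(y,\eta)\in(0,c)$, I keep the first square, bound it below by $a^2$, use $R(y,\eta)^{2/3}\ge(\Re\omega_0)^{2/3}(1-a/\Re\omega_0)$, and complete the square in $a$; since the coefficient of $a$ so produced is $O(h^{2/3})$ and $a<c$ with $\Re\omega_0\sim1$, the loss is only $O(h^{4/3})\ll O(h)$. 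Substituting back produces the asserted $|r_0+2S(\Re\omega_0)^{2/3}h^{2/3}-O(h)|^2\|v\|^2$, while the $\|(hD_t)^2v\|^2$ and boundary terms carry through unchanged.

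The main difficulty is not an isolated hard estimate but the combination of pinning down the constant $S$ in \eqref{const} exactly --- the $\cos\tfrac{\pi}{6}$ coming from the $2\pi/3$-rotation of the complex-scaled Laplacian feeding the positive Airy ground state into the imaginary part of $\omega_0'$, and the $2^{-1/3}$ from the coefficient of $Q(y,\eta)t$ --- while keeping every error $O(h)$ rather than $O(c)$, which is what forces the completion-of-square step above instead of a crude replacement of $R(y,\eta)$ by $\Re\omega_0$, together with covering the boundary corner of the glancing region where the proposition of Section~\ref{aol} ceases to be uniform and one falls back on the numerical-range argument.
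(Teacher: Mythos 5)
Your proposal follows the paper's argument in all essentials: freeze $(y,\eta)$, set $\omega_0'=\omega_0-R(y,\eta)$, invoke the Airy-type proposition of Section~\ref{aol} with $t$ replaced by $2Q(y,\eta)t$ (uniformity on the compact set from the remark after it), expand $|e^{2\pi i/3}\omega_0'-\zeta_1'(2Q)^{2/3}h^{2/3}|^2$ into real and imaginary parts, and use $Q(y,\eta)\geqslant(\min_{y,j}K_j)R(y,\eta)$ to produce the constant $S$ of \eqref{const}. Your completion-of-square when $a=\Re\omega_0-R(y,\eta)>0$ is a slightly more explicit version of the paper's terse statement that the minimum of \eqref{loc} is attained at $R(y,\eta)=\Re\omega_0+O(h^{2/3})$, and it correctly shows the loss from replacing $R^{2/3}$ by $(\Re\omega_0)^{2/3}$ is absorbed into $O(h^{4/3})$.

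The one misstep is the ``corner'' discussion. The hypothesis says $c$ is \emph{sufficiently small} --- $c$ is a parameter you get to choose, and the paper's proof opens by taking $c=r_0\tan(\pi/6)$. With that choice, $|\Re\omega_0'|<r_0\tan(\pi/6)$ and $\Im\omega_0'=r_0>0$ force $\arg\omega_0'\in[\pi/3,2\pi/3]$, a fixed compact subinterval of $(-\pi/6,5\pi/6)$, so the proposition of Section~\ref{aol} applies uniformly without exception. Moreover $|\omega_0'|=\sqrt{(\Re\omega_0')^2+r_0^2}\geqslant r_0$, so your worry that $|\omega_0'|$ might be small is simply false --- $\Im\omega_0'$ is pinned at $r_0$. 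The fallback numerical-range argument you sketch is therefore addressing a configuration that never occurs; it is not harmful, but it reflects having missed that $c$ is chosen (as a function of $r_0$) rather than imposed. Once $c$ is fixed correctly the whole glancing region is covered by the single application of the proposition, exactly as in the paper.
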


\begin{proof}
Let $c=r_0\tan\frac{\pi}{6}$, then since $|\Re\omega_0-R(y,\eta)|<c$, we have $\arg(\omega_0-R(y,\eta))\in[\frac{\pi}{3},\frac{2\pi}{3}]$. It follows immediately from \eqref{lowairy} by replacing $\omega_0$ with $\omega_0-R(y,\eta)$ that
\begin{equation*}
\begin{split}
\|(P(y,t,\eta,hD_t)-\omega_0)v\|^2\geqslant(|\omega_0-R(y,\eta)-e^{-\frac{2\pi i}{3}}\zeta_1'(2Q(y,\eta))^{\frac{2}{3}}h^{\frac{2}{3}}|^2-O(hL))\|v\|^2\\
+C^{-1}\|(hD_t)^2v\|^2-O(h^2)|D_tv(0)|^2-O(h^2)|v(0)|^2.
\end{split}
\end{equation*}
The uniformity of the constants follows from the ellipticity of $Q$ and $R$.

Now we need to find a uniform lower bound for
\begin{equation}
\label{loc}
\begin{split}
|\omega_0-R(y,\eta)-e^{-\frac{2\pi i}{3}}\zeta_1'(2Q(y,\eta))^{\frac{2}{3}}h^{\frac{2}{3}}|^2
=&\ |\Re\omega_0-R(y,\eta)+\sin\left(\frac{\pi}{6}\right)\zeta_1'(2Q(y,\eta))^{\frac{2}{3}}h^{\frac{2}{3}}|^2\\
&+|r_0+\cos\left(\frac{\pi}{6}\right)\zeta_1'(2Q(y,\eta))^{\frac{2}{3}}h^{\frac{2}{3}}|^2
\end{split}
\end{equation}
over $(y,\eta)$ such that $|\Re\omega_0-R(y,\eta)|<c$. The minimum is obtained at $R(y,\eta)=\Re\omega_0+O(h^{\frac{2}{3}})$ and the minimum of $\zeta_1'(2Q(y,\eta))^{\frac{2}{3}}$ under such constraint. Since the principal curvatures are the eigenvalues of the quadratic form $Q(y,\eta)$ with respect to the quadratic form $R(y,\eta)$, we have
\begin{equation*}
Q(y,\eta)\geqslant\left(\min_{y\in X,j=1,\ldots,n-1}K_j(y)\right)R(y,\eta).
\end{equation*}
Thus
\begin{equation*}
\eqref{loc}\geqslant|r_0+2S(\Re\omega_0)^{\frac{2}{3}}h^{\frac{2}{3}}|^2+O(h^{\frac{4}{3}})
\end{equation*}
which completes the proof of \eqref{e:gl}.
\end{proof}

\subsection{Estimate away from the glancing region}
\label{agl}

When $|\Re\omega_0-R(y,\eta)|$ is bounded from below, $2tQ(y,\eta)$ is dominated by $R(y,\eta)-\omega_0$ for small $t$. In this case, we can give a better estimate for $P$ from $e^{-\frac{2\pi i}{3}}(hD_t)^2+R(y,\eta)-\omega_0$.

\begin{lem}
Suppose that $\omega_0\in\mathbb{C}$ with $\Re\omega_0>0,\Im\omega_0=r_0>0$,
$|\Re\omega_0-R(y,\eta)|>c$, then for $L$ large enough, $h$ sufficiently small, and $v\in C_0^\infty([0,L^{-1}))$,
\begin{equation}
\label{e:agl}
\begin{split}
\|(P(y,t,\eta,hD_t)-\omega_0)v\|^2\geqslant&\ (r_0+C^{-1})^2\|v\|^2+C^{-1}(\|(hD_t)^2v\|^2+\langle\eta\rangle^4\|v\|^2)\\
&-O(h^2)\langle\eta\rangle^2|v(0)||D_tv(0)|.
\end{split}
\end{equation}
\end{lem}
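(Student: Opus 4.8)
The plan is to estimate $\|(P(y,t,\eta,hD_t)-\omega_0)v\|^2$ by comparing $P$ with the model operator $P_0 := e^{-2\pi i/3}(hD_t)^2 + (R(y,\eta)-\omega_0)$, treating the terms $e^{-2\pi i/3}\cdot 2tQ(y,\eta)$, $O(t^2+h)\langle\eta\rangle^2$ and $O(h)hD_t$ as perturbations that are small because $v$ is supported in $[0,L^{-1}]$ with $L$ large and $h$ small. First I would expand
\begin{equation*}
\|(P-\omega_0)v\|^2 = \|P_0 v\|^2 + \|(P-\omega_0-P_0)v\|^2 + 2\Re\langle P_0 v,(P-\omega_0-P_0)v\rangle,
\end{equation*}
and bound the model term from below. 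For $P_0$, note $\|P_0 v\|^2 = \|(hD_t)^2 v\|^2 + |R(y,\eta)-\omega_0|^2\|v\|^2 - 2\Re\langle e^{-2\pi i/3}(hD_t)^2 v,(R(y,\eta)-\omega_0)v\rangle$; since $\langle (hD_t)^2 v,v\rangle = \|hD_t v\|^2 + ih^2 D_tv(0)\bar v(0)$ is real up to an $O(h^2)|v(0)||D_tv(0)|$ term, the cross term is $2\Re(e^{-2\pi i/3}\overline{(R-\omega_0)})\|hD_tv\|^2$ up to that boundary error. Here is where the sign condition $|\Re\omega_0 - R(y,\eta)| > c$ enters: it forces $\arg(R(y,\eta)-\omega_0)$ to stay a definite distance from $\pm\pi/3$, so $\Re(e^{-2\pi i/3}\overline{(R-\omega_0)})$ is bounded below by $-C$ (in fact the coefficient multiplying $\|hD_tv\|^2$ is controlled), giving $\|P_0v\|^2 \geqslant c_0\|(hD_t)^2v\|^2 + (|R(y,\eta)-\omega_0|^2 - O(h))\|v\|^2 - O(h^2)|v(0)||D_tv(0)|$. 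Then I would use $|R(y,\eta)-\omega_0|^2 \geqslant r_0^2 + |\Re\omega_0 - R(y,\eta)|^2 \geqslant r_0^2 + c^2 \geqslant (r_0 + C^{-1})^2$ after adjusting constants, and separately extract the elliptic gain $\langle\eta\rangle^4\|v\|^2$ from $R(y,\eta)^2 \sim \langle\eta\rangle^4$ (using ellipticity of $R$), splitting $|R-\omega_0|^2$ appropriately between the two desired lower bounds.

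Next I would control the perturbation $P - \omega_0 - P_0 = e^{-2\pi i/3}\cdot 2tQ(y,\eta) + O(t^2+h)\langle\eta\rangle^2 + O(h)hD_t$ and its cross term with $P_0 v$. The key smallness inputs are $\|tv\| \leqslant L^{-1}\|v\|$, $\|t^2 v\|\leqslant L^{-2}\|v\|$ on the support of $v$, together with $\|hD_t v\|^2 \leqslant \Re\langle((hD_t)^2+t)v,v\rangle + O(h^2)(|D_tv(0)|^2+|v(0)|^2)$ from \eqref{ei:h1}, which lets me absorb $\|hD_tv\|$-type quantities into a small multiple of $\|(hD_t)^2v\|^2 + \|v\|^2$ plus boundary terms. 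The term $2tQ(y,\eta)$ contributes $O(\langle\eta\rangle^2 L^{-1})\|v\|\,\|\cdot\|$, which after Cauchy–Schwarz and Young is dominated by $\tfrac12 c_0\|(hD_t)^2v\|^2$ plus a constant times $\langle\eta\rangle^4 L^{-2}\|v\|^2 + \langle\eta\rangle^2 L^{-1}\|v\|^2$; since $L$ is large these are absorbed into the $C^{-1}\langle\eta\rangle^4\|v\|^2$ and $(r_0+C^{-1})^2\|v\|^2$ budgets. The $O(t^2+h)\langle\eta\rangle^2$ and $O(h)hD_t$ terms are handled the same way using $L$ large and then $h$ small. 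Throughout I keep the boundary contributions in the single combined form $O(h^2)\langle\eta\rangle^2 |v(0)||D_tv(0)|$ as in the statement, which is consistent because every integration by parts here produces a product of the two boundary values weighted by $h^2$ and at most two powers of $\eta$.

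The main obstacle I expect is bookkeeping the interplay between the three small parameters $c$ (fixed), $L$ (large), and $h$ (small depending on $L$) so that all error terms genuinely land inside the claimed positive budget: one has to fix $c$ first, then choose $L = L(c)$ large enough that the $Q$-perturbation and the $\langle\eta\rangle^2$-perturbations are controlled uniformly in $\eta$ (the $\langle\eta\rangle$-uniformity being delicate since the perturbations carry $\langle\eta\rangle^2$ while the gain from $R$ carries $\langle\eta\rangle^4$, so largeness of $L$ must beat the $\langle\eta\rangle^2$ vs $\langle\eta\rangle^4$ mismatch — this works because $\langle\eta\rangle^2 \leqslant \epsilon\langle\eta\rangle^4 + \epsilon^{-1}$), and finally choose $h = h(L)$ small to kill the $O(h)$ remainders. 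A secondary subtlety is that the $\|(hD_t)^2v\|$ gain must survive all the absorption steps with a clean constant $C^{-1}$; I would reserve, say, half of the model operator's $\|(hD_t)^2v\|^2$ coefficient before starting the perturbative estimates so there is room to spend. Apart from that, every individual estimate is a routine Cauchy–Schwarz or integration-by-parts computation of the type already carried out in Section \ref{airy}.
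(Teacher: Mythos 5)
Your overall structure — split off the model operator $P_0 = e^{-2\pi i/3}(hD_t)^2 + R(y,\eta) - \omega_0$, expand the square, control the cross term, and use $L$ large and ellipticity of $R$ to absorb the $t$- and $\langle\eta\rangle$-dependent perturbations — is the same as the paper's. But there is a genuine error in the model-operator step. You claim that the cross term in $\|P_0v\|^2$ is, up to boundary contributions, $2\Re\bigl(e^{-2\pi i/3}\overline{(R-\omega_0)}\bigr)\|hD_tv\|^2$ with the coefficient ``bounded below by $-C$''. That coefficient is
\begin{equation*}
2\Re\bigl[e^{-2\pi i/3}(R(y,\eta)-\bar\omega_0)\bigr] \;=\; -(R(y,\eta)-\Re\omega_0) + \sqrt{3}\,r_0,
\end{equation*}
which is \emph{not} bounded below: as $|\eta|\to\infty$ it behaves like $-R(y,\eta)\sim -\langle\eta\rangle^2$. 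The condition $|\Re\omega_0 - R|>c$ keeps $\arg(R-\omega_0)$ away from $\pm\pi/3$, but that controls the \emph{angle}, not the \emph{modulus}, of $R-\omega_0$; the relevant cosine stabilizes near $-1/2$ for $R\gg\Re\omega_0$, so the product $|R-\omega_0|\cos(\arg(R-\omega_0)+2\pi/3)$ diverges to $-\infty$. Consequently the intermediate bound you write down, $\|P_0v\|^2 \geqslant c_0\|(hD_t)^2v\|^2 + (|R-\omega_0|^2 - O(h))\|v\|^2 - O(h^2)|v(0)||D_tv(0)|$, is not justified by your argument and is in fact stronger than what the method yields.

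The missing step is a Cauchy--Schwarz that trades the unbounded factor $R-\Re\omega_0$ against the two quadratic gains. Since $\Re\langle(hD_t)^2v,v\rangle = \|hD_tv\|^2 + O(h^2)|D_tv(0)||v(0)|$ and $|\langle(hD_t)^2v,v\rangle|\leqslant\|(hD_t)^2v\|\,\|v\|$, one bounds
\begin{equation*}
\bigl|(R-\Re\omega_0)\,\Re\langle(hD_t)^2v,v\rangle\bigr| \;\leqslant\; \tfrac12\|(hD_t)^2v\|^2 + \tfrac12|R-\Re\omega_0|^2\|v\|^2 + O(h^2)|R-\Re\omega_0|\,|D_tv(0)||v(0)|,
\end{equation*}
which consumes \emph{half} of both quadratic budgets; the surviving $r_0$-piece of the cross term and the imaginary part (which carries the $O(h^2)\langle\eta\rangle^2|v(0)||D_tv(0)|$ boundary error you correctly anticipated) are then controlled directly. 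The resulting model bound is $\tfrac12\|(hD_t)^2v\|^2 + \bigl(r_0^2 + \tfrac12|R-\Re\omega_0|^2\bigr)\|v\|^2 - O(h^2)\langle\eta\rangle^2|v(0)||D_tv(0)|$, and the term $r_0^2 + \tfrac12|R-\Re\omega_0|^2$ — together with $C^{-1}\langle\eta\rangle^2\leqslant|R-\Re\omega_0|\leqslant C\langle\eta\rangle^2$ from ellipticity and $|R-\Re\omega_0|>c$ — is exactly what supplies the $(r_0+C^{-1})^2$ and $C^{-1}\langle\eta\rangle^4$ pieces of the conclusion. Your perturbation bookkeeping (large $L$, then small $h$, absorbing $\langle\eta\rangle^2$-weighted errors into $\epsilon\langle\eta\rangle^4 + \epsilon^{-1}$) is fine once the model-operator step is repaired; note only that you must reserve the half of $\|(hD_t)^2v\|^2$ \emph{before} the perturbative estimates precisely because the model cross term has already spent the other half.
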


\begin{proof}
Since
\begin{equation*}
[P(y,t,\eta,hD_t)-\omega_0]v=[e^{-\frac{2\pi i}{3}}(hD_t)^2+R(y,\eta)-\omega_0]v+[O(t^2+t+h)\langle\eta\rangle^2+O(h)hD_t]v,
\end{equation*}
we have
\begin{equation}
\label{es:nm}
\begin{split}
\|(P(y,t,\eta,hD_t)-\omega_0)v\|^2\geqslant&\ (\|[e^{-\frac{2\pi i}{3}}(hD_t)^2+R(y,\eta)-\omega_0]v\|\\
&-\|[O(t^2+t+h)\langle\eta\rangle^2+O(h)hD_t]v\|)^2.
\end{split}
\end{equation}
Now
\begin{equation}
\label{id:nm}
\begin{split}
\|[e^{-\frac{2\pi i}{3}}(hD_t)^2+R(y,\eta)-\omega_0]v\|^2=&\ \|(hD_t)^2v\|^2+|R(y,\eta)-\omega_0|^2\|v\|^2\\
&+2\Re[e^{-\frac{2\pi i}{3}}(R(y,\eta)-\bar{\omega}_0)\langle(hD_t)^2v,v\rangle],
\end{split}
\end{equation}
where
\begin{equation}
\label{id:rei}
\begin{split}
\Re[e^{-\frac{2\pi i}{3}}(R(y,\eta)-\bar{\omega}_0)\langle(hD_t)^2v,v\rangle]=&\ \Re[e^{-\frac{2\pi i}{3}}(R(y,\eta)-\bar{\omega}_0)]\Re\langle(hD_t)^2v,v\rangle\\
&-\Im[e^{-\frac{2\pi i}{3}}(R(y,\eta)-\bar{\omega}_0)]\Im\langle(hD_t)^2v,v\rangle.
\end{split}
\end{equation}
Notice that
\begin{equation*}
\langle(hD_t)^2v,v\rangle=\|hD_tv\|^2+ih^2D_tv(0)\overline{v(0)}.
\end{equation*}
Therefore
\begin{equation}
\label{id:re}
\Re\langle(hD_t)^2v,v\rangle=\|hD_tv\|^2+\Re(ih^2D_tv(0)\overline{v(0)})\geqslant-h^2|D_tv(0)||v(0)|;
\end{equation}
\begin{equation}
\label{id:im}
|\Im\langle(hD_t)^2v,v\rangle|=|\Im(ih^2D_tv(0)\overline{v(0)})|\leqslant h^2|D_tv(0)||v(0)|.
\end{equation}
We can compute that
\begin{equation*}
\Re[e^{-\frac{2\pi i}{3}}(R(y,\eta)-\bar{\omega_0})]=-\frac{1}{2}(R(y,\eta)-\Re\omega_0)+\frac{\sqrt{3}}{2}r_0.
\end{equation*}
In the identity
\begin{equation*}
\begin{split}
\Re[e^{-\frac{2\pi i}{3}}(R(y,\eta)-\bar{\omega}_0)]\Re\langle(hD_t)^2v,v\rangle
=&-\frac{1}{2}(R(y,\eta)-\Re\omega_0)\Re\langle(hD_t)^2v,v\rangle\\
&+\frac{\sqrt{3}}{2}r_0\Re\langle(hD_t)^2v,v\rangle,
\end{split}
\end{equation*}
we apply \eqref{id:re} to the second term and
\begin{equation*}
|(R(y,\eta)-\Re\omega_0)\Re\langle(hD_t)^2v,v\rangle|\leqslant\frac{1}{2}(\|(hD_t)^2v\|^2+|(R(y,\eta)-\Re\omega_0)|^2\|v\|^2)
\end{equation*}
to the first term, and get
\begin{equation}
\label{es:rei}
\begin{split}
\Re[e^{-\frac{2\pi i}{3}}(R(y,\eta)-\bar{\omega}_0)]\Re\langle(hD_t)^2v,v\rangle
\geqslant&-\frac{1}{4}(\|hD_tv\|^2+|R(y,\eta)-\Re\omega_0|^2\|v\|^2)\\
&-h^2\frac{\sqrt{3}}{2}r_0|D_tv(0)||v(0)|.
\end{split}
\end{equation}
By \eqref{id:im}, we also have
\begin{equation}
\label{es:imi}
-\Im[e^{-\frac{2\pi i}{3}}(R(y,\eta)-\bar{\omega}_0)]\Im\langle(hD_t)^2v,v\rangle
\geqslant-h^2|R(y,\eta)-\omega_0||D_tv(0)||v(0)|.
\end{equation}
Combining \eqref{id:nm},\eqref{id:rei},\eqref{es:rei},\eqref{es:imi} together, we have
\begin{equation}
\label{es:main}
\begin{split}
\|[e^{-\frac{2\pi i}{3}}(hD_t)^2+R(y,\eta)-\omega_0]v\|^2\geqslant&\ \frac{1}{2}\|(hD_t)^2v\|^2+(r_0^2+\frac{1}{2}|R(y,\eta)-\Re\omega_0|^2)\|v\|^2\\
&-h^2(|R(y,\eta)-\omega_0|+\sqrt{3}r_0)|D_tv(0)||v(0)|.
\end{split}
\end{equation}

Now we estimate the remainder terms, since $|R(y,\eta)-\Re\omega_0|>c$ and $R(y,\eta)$ is a quadratic form in $\eta$, we have
\begin{equation}
\label{es:quad}
C^{-1}\langle\eta\rangle^2\leqslant|R(y,\eta)-\Re\omega_0|\leqslant C\langle\eta\rangle^2
\end{equation}
for some constant $C>0$ independent of $y$ and $\eta$. Therefore if $L$ is large enough and $h$ is small enough, we have for $v\in C_0^\infty([0,L^{-1}])$,
\begin{equation*}
\|[O(t^2+t+h)\langle\eta\rangle^2+O(h)hD_t]v\|^2\leqslant C^{-2}|R(y,\eta)-\Re\omega_0|^2\|v\|^2+O(h^2)\|hD_tv\|^2.
\end{equation*}
We can apply
\begin{equation*}
\langle(hD_t)^2v,v\rangle=\|hD_tv\|^2+ih^2D_tv(0)\overline{v(0)}
\end{equation*}
again to get
\begin{equation*}
\|hD_tv\|^2\leqslant|\langle(hD_t)^2v,v\rangle|+h^2|D_tv(0)||v(0)|\leqslant\frac{1}{2}(\|(hD_t)^2v\|^2+\|v\|^2)+h^2|D_tv(0)||v(0)|.
\end{equation*}
Therefore
\begin{equation}
\label{es:lot}
\begin{split}
\|[O(t^2+t+h)\langle\eta\rangle^2+O(h)hD_t]v\|^2\leqslant&\ C^{-2}(|R(y,\eta)-\Re\omega_0|^2\|v\|^2\\
&+\|(hD_t)^2v\|^2)+O(h^4)|D_tv(0)||v(0)|.
\end{split}
\end{equation}

It is easy to prove the following elementary inequality:
\begin{equation}
\label{es:ele}
(\sqrt{a-h^2b}-\sqrt{C^{-2}a+h^4b})^2\geqslant(1-2C^{-1})a-2h^2b
\end{equation}
for $C$ large and $h$ small, independent of $a,b>0$.

Applying \eqref{es:ele} with $a=\frac{1}{2}\|(hD_t)^2v\|^2+(r_0^2+\frac{1}{2}|R(y,\eta)-\Re\omega_0|^2)\|v\|^2$, $b=(|R(y,\eta)-\omega_0|+\frac{\sqrt{3}}{2}r_0)|D_tv(0)||v(0)|$, by \eqref{es:nm},\eqref{es:main} and \eqref{es:lot}, we have
\begin{equation*}
\begin{split}
\|(P(y,t,\eta,hD_t)-\omega_0)v\|^2\geqslant(1-2C^{-1})[\frac{1}{2}\|(hD_t)^2v\|^2+(r_0^2+\frac{1}{2}|R(y,\eta)-\Re\omega_0|^2)\|v\|^2]\\
-O(h^2)(|R(y,\eta)-\omega_0|+\sqrt{3}r_0)|D_tv(0)||v(0)|.
\end{split}
\end{equation*}

Now by our assumption, $|R(y,\eta)-\Re\omega_0|>c=r_0\tan\frac{\pi}{6}$, and \eqref{es:quad}, we proved the lemma.
\end{proof}

\subsection{Lower bounds for the scaled operator near the boundary}
\label{bd}

We first consider
an estimate valid for functions supported in a sufficiently small neighborhood of the boundary.
\begin{prop}
Suppose that $u\in C^\infty(\RR^n\setminus\mathcal{O})$ satisfies $\supp(u)\subset X\times[0,L^{-1})$, and the Robin boundary condition $\partial_\nu u=\gamma u$ for $\gamma\in C^\infty(X,\mathbb{C})$. Then
\begin{equation}
\label{lb:nb}
\|(h^2P-\omega_0)u\|^2\geqslant|r_0+S(\Re\omega_0)^{\frac{2}{3}}h^{\frac{2}{3}}-O(h)|^2\|u\|^2.
\end{equation}
\end{prop}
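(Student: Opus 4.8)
The plan is to reduce the estimate for $h^2P-\omega_0$ on functions of the spatial variable to the already-established one-dimensional estimates \eqref{e:gl} and \eqref{e:agl} for the frozen-coefficient operator $P(y,t,\eta,hD_t)$, using the FBI transform $T_h$ on $X=\partial\mathcal{O}$ with values in the Hilbert space $\mathscr{H}=L^2([0,\infty))$, as indicated in the remark at the end of Section \ref{fbi}. First I would write $h^2P$ in the normal geodesic coordinates so that its symbol is \eqref{symbol}, and apply $T_h$ in the $y$-variable; by \eqref{aliso} this is an isometry modulo $h^\infty$, so $\|(h^2P-\omega_0)u\|^2 = \|T_h(h^2P-\omega_0)u\|^2 + O(h^\infty)\|u\|^2$. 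Using the lemma relating $T_h^\ast p T_h$ to $\Op(p)$ and the quantization-to-symbol comparison \eqref{fbi2} adapted to the operator-valued setting, I would show that $T_h(h^2P-\omega_0)u(y,\eta)$ agrees, up to an error of size $O(h)\langle\eta\rangle^2$ in $L^2(T^\ast X;\mathscr H)$, with $(P(y,t,\eta,hD_t)-\omega_0)(T_hu)(y,\eta)$, where for each fixed $(y,\eta)$ the function $t\mapsto (T_hu)(y,\eta,t)$ lies in $C_0^\infty([0,L^{-1}))$ and inherits the Robin condition $e^{-\pi i/3}\partial_t v + \gamma v|_{t=0}=0$ at $t=0$ (this is where $\supp u\subset X\times[0,L^{-1})$ enters).

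Next I would split $T^\ast X$ into the glancing region $\{|\Re\omega_0-R(y,\eta)|<c\}$, which is compact, and its complement, and apply \eqref{e:gl} on the first and \eqref{e:agl} on the second to the fiber functions $v=(T_hu)(y,\eta,\cdot)$. Integrating in $(y,\eta)$ and using the isometry property \eqref{fbi1} to convert $\|\langle\eta\rangle T_hu\|$-type terms back to $H_h^1$-norms of $u$, the leading term on the glancing part is $|r_0+2S(\Re\omega_0)^{2/3}h^{2/3}-O(h)|^2\|T_hu\|^2$; on the non-glancing part \eqref{e:agl} gives $(r_0+C^{-1})^2\|v\|^2$ plus the coercive term $C^{-1}\langle\eta\rangle^4\|v\|^2$, which dominates the cross-region errors and in particular is $\geq |r_0+S(\Re\omega_0)^{2/3}h^{2/3}-O(h)|^2\|v\|^2$ once $h$ is small (note $2S$ in the glancing estimate is stronger than the claimed $S$, giving room). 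Summing the two regions and using $\|T_hu\|^2=(1+O(h^\infty))\|u\|^2$ yields \eqref{lb:nb}.

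The main obstacle I expect is the careful bookkeeping of the boundary terms $-O(h^2)|D_tv(0)|^2-O(h^2)|v(0)|^2$ and $-O(h^2)\langle\eta\rangle^2|v(0)||D_tv(0)|$ appearing in \eqref{e:gl} and \eqref{e:agl}: after integration in $(y,\eta)$ these become $O(h^2)\|\Tr (hD_t)^{j}T_hu\|_{L^2(T^\ast X)}^2$-type quantities, which must be controlled using the Robin boundary condition (to convert $D_t$-traces at $t=0$ into $0$-traces, absorbing $\gamma$) together with the semiclassical trace estimate of Proposition \ref{trace}, costing a factor $h^{-1}$, so that the net contribution is $O(h)\|u\|_{H_h^2}^2$ and can be absorbed into the $C^{-1}\|(hD_t)^2u\|^2 + C^{-1}\langle\eta\rangle^4\|u\|^2$ coercivity coming from \eqref{e:gl}--\eqref{e:agl} and elliptic regularity for $h^2P$. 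A secondary technical point is checking that the FBI conjugation genuinely produces the frozen operator $P(y,t,\eta,hD_t)$ with $\eta$ playing the role of the quantized $hD_y$ up to the advertised $O(h)\langle\eta\rangle^2$ error in the operator-valued calculus; this follows from the remark extending the FBI formalism to $\mathscr{H}$-valued functions, but the symbol estimates must be tracked with the $t$- and $hD_t$-dependence treated as bounded operators on $\mathscr H$ after the coercive gains are in hand.
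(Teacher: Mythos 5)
Your proposal follows essentially the same route as the paper: apply the $\mathscr H$-valued FBI transform in the $y$-variable to reduce to the frozen one-dimensional operator, split $T^\ast X$ into the glancing and non-glancing regions and invoke \eqref{e:gl} and \eqref{e:agl} respectively, then control the $t=0$ boundary terms by rewriting $D_tT_hu(\cdot,0)=T_h(-k u(\cdot,0))$ via the Robin condition and applying the trace estimate of Proposition \ref{trace}, absorbing the resulting $O(h)\|u\|_{H_h^2}^2$ into the coercive terms. One small imprecision: the Airy estimates \eqref{e:gl}--\eqref{e:agl} hold for arbitrary $v\in C_0^\infty([0,L^{-1}))$ and the fibers $T_hu(y,\eta,\cdot)$ need not literally satisfy a Robin condition (since $\gamma=\gamma(y)$ is not constant and $T_h$ mixes $y$), but as your second paragraph shows you use the Robin condition correctly only to rewrite the boundary traces, so this does not affect the argument.
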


\begin{proof} Let $T$ is the FBI transform defined in Section \ref{fbi}, by \eqref{aliso}, we have
\begin{equation*}
\|u\|_{L^2(X\times[0,L^{-1}))}^2=\|Tu\|^2_{L^2(T^\ast X\times[0,L^{-1}))}+O(h)\|u\|_{L^2}^2.
\end{equation*}
By \eqref{fbi2}, we have
\begin{equation*}
\|u\|_{H_h^2(X\times[0,L^{-1}))}^2\sim\|\langle\eta\rangle^2Tu\|^2_{L^2(T^\ast X\times[0,L^{-1}))}+\|(hD_t)^2Tu\|^2_{L^2(T^\ast X\times[0,L^{-1}))},
\end{equation*}
also
\begin{equation*}
\|(h^2P-\omega_0)u\|^2_{L^2(X\times[0,L^{-1}))}=\|(P(y,t,\eta,hD_t)-\omega_0)Tu\|^2_{L^2(T^\ast X\times[0,L^{-1}))}+O(h)\|u\|_{H_h^2}^2.
\end{equation*}
Now \eqref{e:gl} shows that if $|R(y,\eta)-\Re\omega_0|<c$,
\begin{equation*}
\begin{split}
\int_0^\infty|(P(y,t,\eta,hD_t)-\omega_0)Tu|^2dt\geqslant(r_0+2S(\Re\omega_0)^{\frac{2}{3}}h^{\frac{2}{3}}-O(h))^2\int_0^\infty|Tu|^2dt\\
+C^{-1}\int_0^\infty|(hD_t)^2Tu|^2dt-O(h^2)|D_tTu(0)|^2-O(h^2)|Tu(0)|^2;
\end{split}
\end{equation*}
and \eqref{e:agl} shows that if $|R(y,\eta)-\Re\omega_0|>c$,
\begin{equation*}
\begin{split}
\int_0^\infty|(P(y,t,\eta,hD_t)-\omega_0)Tu|^2dt\geqslant(r_0+C^{-1})^2\int_0^\infty|Tu|^2dt+C^{-1}\int_0^\infty|(hD_t)^2Tu|^2dt\\
+C^{-1}\langle\eta\rangle^4\int_0^\infty|Tu|^2dt-O(h^2)\langle\eta\rangle^2|D_tTu(0)||Tu(0)|.
\end{split}
\end{equation*}
If we integrate $\int_0^\infty|(P(y,t,\eta,hD_t)-\omega_0)Tu|^2dt$ in $(y,\eta)\in T^\ast X$, we get
\begin{equation}
\label{lb:nbt}
\begin{split}
&\|(P(y,t,\eta,hD_t)-\omega_0)Tu\|^2_{L^2(T^\ast X\times[0,L^{-1}))}\\
& \ \ \ \ \ \ \geqslant(r_0+2S(\Re\omega_0)^{\frac{2}{3}}h^{\frac{2}{3}}-O(h))^2\|Tu\|^2_{L^2(T^\ast X\times[0,L^{-1}))}\\
& \ \ \ \ \ \ +C^{-1}\left(\int_{T^\ast X}\int_0^\infty|(hD_t)^2Tu|^2dt+\int_{|R(y,\eta)-\Re\omega_0|>c}\int_0^\infty\langle\eta\rangle^4|Tu|^2dt\right)\\
& \ \ \ \ \ \ -O(h^2)\|\langle\eta\rangle D_tTu(y,\eta,0)\|_{L^2(T^\ast X)}^2-O(h^2)\|\langle\eta\rangle Tu(y,\eta,0)\|^2_{L^2(T^\ast X)}.
\end{split}
\end{equation}
Here $D_tTu(y,\eta,0)=T(D_tu(\cdot,0))(y,\eta)=T(-ku(\cdot,0))$, so by \eqref{fbi1},
\begin{equation*}
\begin{split}
\|\langle\eta\rangle D_tTu(y,\eta,0)\|_{L^2(T^\ast X)}^2=& \|\langle\eta\rangle T(k(\cdot)u(\cdot,0))\|^2_{L^2(T^\ast X)}\\
\leqslant & C\|k(y)u(y,0)\|^2_{H_h^1(X)}\leqslant C\|k\|_{H^1(X)}^2\|u(y,0)\|^2_{H_h^1(X)};\\
\|\langle\eta\rangle Tu(\cdot,0)\|^2_{L^2(T^\ast X)}\leqslant & C\|u(y,0)\|^2_{H_h^1(X)}.
\end{split}
\end{equation*}

Now we can apply Proposition \ref{trace} to the last two terms in \eqref{lb:nbt} to show that they are bounded by $O(h)\|u\|_{H_h^2}$.

Notice that if $|R(y,\eta)-\Re\omega_0|<c$, then $(y,\eta)$ lies in a compact region of $T^\ast X$,
we have
\begin{equation*}
\int_{|R(y,\eta)-\Re\omega_0|<c}\int_0^\infty\langle\eta\rangle^4|Tu|^2dt\leqslant C\|Tu\|^2_{L^2(T^\ast X\times[0,L^{-1}))},
\end{equation*}
thus
\begin{equation*}
\int_{T^\ast X}\int_0^\infty|(hD_t)^2Tu|^2dt+\int_{|R(y,\eta)-\Re\omega_0|>c}\int_0^\infty\langle\eta\rangle^4|Tu|^2dt\geqslant
\max\{0, C^{-1}\|u\|_{H_h^2}^2-C\|u\|_{L^2}^2\}.
\end{equation*}

Therefore from \eqref{lb:nbt}, we have
\begin{equation*}
\begin{split}
\|(h^2P-\omega_0)u\|^2\geqslant &(r_0+2S(\Re\omega_0)^{\frac{2}{3}}h^{\frac{2}{3}}-O(h))^2\|u\|_{L^2}^2\\
&+C^{-1}\max\{0, C^{-1}\|u\|_{H_h^2}^2-C\|u\|_{L^2}^2\}-O(h)\|u\|_{H_h^2}^2.
\end{split}
\end{equation*}
This concludes the proof of \eqref{lb:nb}.
\end{proof}

\subsection{Lower bounds for the scaled operator}
\label{sc}
The estimate away from the boundary is now combined with elliptic estimates away
from the boundary to give the main estimate of the paper:
\begin{thm}
\label{lb:sc}
There exists some $\epsilon>0$ such that for $\omega_0$ satisfying $\arg\omega_0\in(\epsilon,\frac{\pi}{2}-\epsilon)$, $\Re\omega_0\in(1-\epsilon,1+\epsilon)$, and $u\in C^\infty(\RR^n\setminus\mathcal{O})\cap D(\RR^n\setminus\mathcal{O})$(i.e., satisfying the Robin boundary condition), we have
\begin{equation}
\|(h^2P-\omega_0)u\|^2\geqslant|r_0+2S(\Re\omega_0)^{\frac{2}{3}}h^{\frac{2}{3}}-O(h)|^2\|u\|^2.
\end{equation}
for all sufficiently small $h>0$.
\end{thm}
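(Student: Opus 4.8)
The plan is to reduce to the near-boundary estimate \eqref{lb:nb} by a partition of unity in the normal variable $t$, handling the region away from the boundary by a plain (semiclassical) elliptic estimate. Fix a small $\delta\in(0,L^{-1})$ and choose $\psi_0,\psi_1\in C^\infty([0,\infty))$, functions of $t$ alone, with $\psi_0^2+\psi_1^2\equiv 1$, $0\leqslant\psi_j\leqslant1$, $\psi_0\equiv1$ on $[0,\delta]$ and $\supp\psi_0\subset[0,L^{-1})$; then $\supp\psi_1\subset[\delta,\infty)$ and $\psi_0',\psi_1'$ are supported in the fixed compact set $\mathcal{A}=X\times[\delta,L^{-1}]$, which stays away from $\partial\mathcal{O}$. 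Since $\psi_0\equiv1$ near $t=0$, the function $\psi_0u$ is supported in $X\times[0,L^{-1})$ and still satisfies the scaled Robin boundary condition, so \eqref{lb:nb} applies to it and gives $\|(h^2P-\omega_0)\psi_0u\|^2\geqslant|r_0+2S(\Re\omega_0)^{\frac23}h^{\frac23}-O(h)|^2\|\psi_0u\|^2$; here the hypotheses $\arg\omega_0\in(\epsilon,\tfrac\pi2-\epsilon)$ and $\Re\omega_0\in(1-\epsilon,1+\epsilon)$ are exactly what that proposition needs, as they force $\Re\omega_0>0$, $r_0>0$ and $|\omega_0|\sim1$ uniformly.

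For the far piece, I would use that on $\{t\geqslant\delta\}$ the principal symbol $p$ of $h^2P$ lies in a closed subsector $\{\epsilon_0\leqslant-\arg p\leqslant\pi-\epsilon_0\}$ of the lower half-plane (recalled in Section \ref{cs}, following \cite{SZ4}), while $\omega_0$ lies in the open upper half-plane with $\arg\omega_0\geqslant\epsilon$; taking $\epsilon_0<\epsilon$, the distance from $\omega_0$ to that sector (and to $0$) is at least some $c_1$ with $c_1-r_0$ bounded below uniformly over the admissible $\omega_0$. Together with the semiclassical ellipticity of $-h^2\Delta|_\Gamma$ at high frequency, this makes $p-\omega_0$ semiclassically elliptic of order $2$ on $\{t\geqslant\delta\}$, so the usual elliptic estimate yields, for $h$ small, $\|(h^2P-\omega_0)\psi_1u\|^2\geqslant(c_1^2-O(h))\|\psi_1u\|^2\geqslant|r_0+2S(\Re\omega_0)^{\frac23}h^{\frac23}-O(h)|^2\|\psi_1u\|^2$ (the left constant tends to $c_1^2>r_0^2$, the right one to $r_0^2$), and on the compact set $\mathcal{A}$ it also gives the interior bound $\|u\|_{H_h^1(\mathcal{A})}\leqslant C(\|(h^2P-\omega_0)u\|+\|u\|)$.

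To glue, write $\|(h^2P-\omega_0)u\|^2=\|\psi_0(h^2P-\omega_0)u\|^2+\|\psi_1(h^2P-\omega_0)u\|^2$ using $\psi_0^2+\psi_1^2=1$, and commute: $\psi_j(h^2P-\omega_0)u=(h^2P-\omega_0)\psi_ju-[h^2P,\psi_j]u$, where $[h^2P,\psi_j]=-[h^2\Delta|_\Gamma,\psi_j]$ is $h$ times a first-order semiclassical operator supported in $\mathcal{A}$, so $\|[h^2P,\psi_j]u\|\leqslant Ch\|u\|_{H_h^1(\mathcal{A})}\leqslant Ch(\|(h^2P-\omega_0)u\|+\|u\|)=:E$ by the interior bound. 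Writing $\mathcal N=\|(h^2P-\omega_0)u\|$ and $B_j=\|(h^2P-\omega_0)\psi_ju\|$, one has $\|\psi_j(h^2P-\omega_0)u\|\geqslant B_j-E$ and $B_j\leqslant\mathcal N+E$, so the elementary inequality $(B_j-E)_+^2\geqslant B_j^2-2B_jE$ together with $\sum_jB_j\leqslant 2(\mathcal N+E)$ gives $\mathcal N^2\geqslant\sum_jB_j^2-O(h)(\mathcal N^2+\|u\|^2)\geqslant|r_0+2S(\Re\omega_0)^{\frac23}h^{\frac23}-O(h)|^2\|u\|^2-O(h)(\mathcal N^2+\|u\|^2)$, using the two pieces above and $\|\psi_0u\|^2+\|\psi_1u\|^2=\|u\|^2$. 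Absorbing $O(h)\mathcal N^2$ on the left and noting that $r_0$ is bounded below (so that $|r_0+2S(\Re\omega_0)^{\frac23}h^{\frac23}-O(h)|^2-O(h)$ is again of that same form), this is exactly the asserted estimate.

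The main obstacle is not conceptual — all the analysis sits in \eqref{lb:nb} — but bookkeeping: one must check that the gluing does not degrade the $h^{2/3}$ gain. This is precisely why the commutator error is routed through the interior elliptic estimate and its $\|(h^2P-\omega_0)u\|$-part absorbed into the left side, rather than handled crudely (which would cost $O(h^{1/2})$ instead of $O(h)$). Secondary points to verify are that $\psi_0u$ inherits the scaled Robin condition and that the far-region constant $c_1$ strictly and uniformly exceeds $r_0$ on the allowed range of $\omega_0$.
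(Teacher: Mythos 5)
Your proposal is correct and follows essentially the same route as the paper: a squared partition of unity localizing near and away from the boundary, the near-boundary estimate \eqref{lb:nb} on the piece satisfying the scaled Robin condition, a symbolic lower bound (the scaled symbol lies in a strict subsector of the lower half-plane, strictly separated from $\omega_0$) combined with semiclassical ellipticity on the far piece, and commutator errors routed through an interior elliptic estimate so that they are $O(h)(\|(h^2P-\omega_0)u\|+\|u\|)$ and can be absorbed. The only cosmetic differences are that the paper cuts off in the distance function $d(x)$ rather than in the normal coordinate $t$, and it obtains the far-region symbol bound in the exact form $\inf|p-\omega_0|>r_0+2S(\Re\omega_0)^{2/3}h^{2/3}$ directly, whereas you pass through a strictly larger constant $c_1>r_0$; both yield the same conclusion.
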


\begin{proof}
We only need to estimate the part away from the boundary and connect it with \eqref{lb:nb}. Let $\varphi_0,\varphi_1\in C^\infty(\RR^n;[0,1])$ such that $\varphi_0^2+\varphi_1^2=1$, $\supp\varphi_0\subset\{x:d(x)<L^{-1}\},\varphi_1=0$ on $\{x:d(x)>(2L)^{-1}\}$ where $d(x)$ is the distance from $x$ to $\mathcal{O}$. We claim that
\begin{equation}
\label{lb:ab}
\|(h^2P-\omega_0)\varphi_1u\|^2\geqslant(r_0+2S(\Re\omega_0)^{\frac{2}{3}}h^{\frac{2}{3}})^2\|\varphi_1u\|^2.
\end{equation}
In fact, from the argument in Section \ref{cs}, the symbol $p$ of $-\Delta|_\Gamma$ when $d(x)>(2L)^{-1}$ takes its values in $\epsilon<-\mbox{arg}z<\pi-\epsilon$ for some $\epsilon>0$. So by the assumption on $\omega_0$,
\begin{equation*}
\begin{split}
\inf|p-\omega_0|&>\Im(e^{i\epsilon}\omega_0)=|\omega_0|\sin(\epsilon+\arg\omega_0)\\
&>|\omega_0|\sin(\arg\omega_0)+2S(\Re\omega_0)^{\frac{2}{3}}h^{\frac{2}{3}}\\
&=r_0+2S(\Re\omega_0)^{\frac{2}{3}}h^{\frac{2}{3}}.
\end{split}
\end{equation*}
We have the estimate \eqref{lb:ab}.

Since
\begin{equation*}
(h^2P-\omega_0)\varphi_ju=\varphi_j(h^2P-\omega_0)u-[\varphi_j,h^2P]u,
\end{equation*}
we have
\begin{equation*}
\begin{split}
\|(h^2P-\omega_0)\varphi_ju\|^2=&\ \|\varphi_j(h^2P-\omega_0)u-[\varphi_j,h^2P]u\|^2\\
\leqslant&\ \|\varphi_j(h^2P-\omega_0)u\|^2+2\|\varphi_j(h^2P-\omega_0)u\|\|[\varphi_j,h^2P]u\|\\
&+\|[\varphi_j,h^2P]u\|^2,
\end{split}
\end{equation*}
thus
\begin{equation*}
\begin{split}
\|(h^2P-\omega_0)u\|^2=&\sum\limits_{j=0,1}\|\varphi_j(h^2P-\omega_0)u\|^2\\
\geqslant&\sum\limits_{j=0,1}\|(h^2P-\omega_0)\varphi_ju\|^2-\sum\limits_{j=0,1}\|[\varphi_j,h^2P]u\|^2\\
&-\sum\limits_{j=0,1}\|\varphi_j(h^2P-\omega_0)u\|\|[\varphi_j,h^2P]u\|.
\end{split}
\end{equation*}

The commutators can be estimated by
\begin{equation*}
\|[\varphi_j,h^2P]u\|=O(h)(\|hD_xu\|_{L^2(X\times[(2L)^{-1},L^{-1}])}+\|u\|)\leqslant O(h)(\|(h^2P-\omega_0)u\|+\|u\|)
\end{equation*}
since $h^2P-\omega_0$ is elliptic when $d(x)>(2L)^{-1}$. Therefore
\begin{equation*}
\|(h^2P-\omega_0)u\|^2\geqslant\sum_{j=0,1}\|(h^2P-\omega_0)\varphi_ju\|^2-O(h)(\|(h^2P-\omega_0)u\|^2+\|u\|^2).
\end{equation*}
Now we can conclude that
\begin{equation*}
\begin{split}
\|(h^2P-\omega_0)u\|^2&\geqslant(1-O(h))\sum\limits_{j=0,1}\|(h^2P-\omega_0)\varphi_ju\|^2-O(h)\|u\|^2\\
&\geqslant(1-O(h))\sum\limits_{j=0,1}|r_0+2S(\Re\omega_0)^{\frac{2}{3}}h^{\frac{2}{3}}-O(h)|^2\|\varphi_ju\|^2-O(h)\|u\|^2\\
&\geqslant|r_0+2S(\Re\omega_0)^{\frac{2}{3}}h^{\frac{2}{3}}-O(h)|^2\|u\|^2.
\end{split}
\end{equation*}
\end{proof}

\section{The pole-free region}
\label{pf}
Now we prove Theorem \ref{polefree}. An equivalent formulation is to say that there are no resonances for $P^{(\gamma)}$ in the region $\Re\zeta>c_0,0<-\mbox{Im}\zeta<S(\Re\zeta)^{\frac{1}{3}}-c_1$ for some constant $c_0,c_1>0$. Suppose $\zeta$ is a resonance of $P^{(\gamma)}$ such that $0<-\mbox{Im}\zeta<S(\Re\zeta)^{\frac{1}{3}}-c_1$. Then by Proposition \ref{re-sp}, $\lambda=\zeta^2$ is an eigenvalue of $P$. Let $h=(\Re\zeta)^{-1}$, then $h^2\zeta^2$ is an eigenvalue of $h^2P$: $h^2Pu=h^2\zeta^2u$ for some $u\in D(\RR^n\setminus\mathcal{O})$. Now we apply  Theorem \ref{lb:sc} to $u$ and $\omega_0=\Re(h^2\zeta^2)+ir_0,r_0>0$. Since $h\Re\zeta=1$, we have $$\Re(h^2\zeta^2)=h^2(\Re\zeta)^2-h^2(\Im\zeta)^2=1+O(h^{\frac{4}{3}});$$
$$\Im(h^2\zeta^2)=2h^2(\Re\zeta)(\Im\zeta)=2h\Im\zeta=O(h^{\frac{4}{3}}).$$
So it is easy to choose some $r_0$ such that $\omega_0$ satisfies the condition in Theorem \ref{lb:sc}(e.g. $r_0=1$), we get
$$|h^2\zeta^2-\omega_0^2|\|u\|_{L^2}^2
=\|(h^2P-\omega_0)u\|_{L^2}^2\geqslant|r_0+2S(\Re\omega_0)^{\frac{2}{3}}h^{\frac{2}{3}}-O(h)|^2\|u\|_{L^2}^2,$$
where $$h^2\zeta^2-\omega_0=i(\Im(h^2\zeta^2)-r_0)=i(2h\Im\zeta-r_0);$$
$$\Re\omega_0=\Re(h^2\zeta^2)=1-h^2(\Im\zeta)^2.$$
Thus we have
$$|r_0-2h\mbox{Im}\zeta|^2\geqslant|r_0+2S[1-h^2(\mbox{Im}\zeta)^2]^{\frac{2}{3}}h^{\frac{2}{3}}-O(h)|^2.$$
so
$$-\mbox{Im}\zeta\geqslant S[1-h^2(\mbox{Im}\zeta)^2]^{\frac{2}{3}}h^{-\frac{1}{3}}-M.$$
Now by the assumption that
$-\mbox{Im}\zeta<Sh^{-\frac{1}{3}}-c_1$, we can choose $c_1$ large, (e.g. $c_1\geqslant S+M$), so that we have
$$1-h^{\frac{1}{3}}\geqslant(1-h^2(\mbox{Im}\zeta)^2)^{\frac{2}{3}}=(1-O(h^{\frac{4}{3}}))^{\frac{2}{3}}.$$
Let $h\to0$, we have a contradiction. Therefore we can choose $c_0,c_1>0$ large such that there are no poles in
$\Re\zeta>c_0, 0<-\mbox{Im}\zeta<S(\Re\zeta)^{\frac{1}{3}}-c_1$.

\section{Appendix}
Now we present the proof of Lemma \ref{detob} following \cite{SZ4}. We need to prove the following\\
(1) $u$ extends holomorphically to a function $U$ in a complex open neighborhood of $W\cap\bigcup_{|\theta|\leqslant\theta_0}\Gamma_\theta^0$;\\
(2) $u_\theta=U|_{\Gamma_\theta}$ is smooth up to $\partial\Gamma_\theta=\partial\mathcal{O}$;\\
(3) $(-\Delta|_{\Gamma_\theta}-\lambda^2)^{k_0}u_\theta=0,\partial^\alpha u_\theta|_{\partial\Gamma_\theta}=\bar{u}_\alpha$ in $\Gamma_\theta\cap W$.

Part (1) and the first equation in (3) follows from Lemma \ref{nonchd} by choosing intermediate contours between $\Gamma_\theta$ and $\RR^n\setminus\mathcal{O}$ away from the boundary. The difficulty lies in justification of the boundary condition for which we need to estimate the norm of $u_\theta$. To do this, we first review the strong uniqueness property of the scaled operator and its corollary. For details, see the appendix of \cite{SZ4}.

\begin{prop}
Assume $P$ is an $m$-th order differential operator with holomorphic coefficients, $\Gamma$ a totally real submanifold of $\mathbb{C}^n$ of maximal dimension such that $P|_\Gamma$ is elliptic. Then if $u\in\mathscr{D}'(\Gamma)$ satisfies $P_\Gamma u=0$ on $\Gamma$ and $u=0$ in a neighborhood of some $x_0\in\Gamma$, then $u\equiv0$.
\end{prop}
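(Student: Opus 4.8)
The plan is to deduce the statement from the classical analytic hypoellipticity of elliptic operators with real-analytic coefficients, together with a connectedness argument along $\Gamma$. First I would introduce the set $V=\{x\in\Gamma:u \text{ vanishes in a neighborhood of } x\}$. By hypothesis $x_0\in V$, so $V\neq\emptyset$, and $V$ is open by its very definition; it then suffices to prove that $V$ is relatively closed in $\Gamma$, for then $V=\Gamma$ (the contour $\Gamma$ of interest, being a deformation of the connected set $\RR^n\setminus\mathcal{O}$, is connected, and in general one runs the argument on the connected component of $x_0$).

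The crucial local input comes from straightening $\Gamma$. Near any point $x_1\in\Gamma$, since $\Gamma$ is a totally real submanifold of $\mathbb{C}^n$ of maximal dimension $n$, there is a biholomorphism of a neighborhood of $x_1$ in $\mathbb{C}^n$ carrying $\Gamma$ onto an open piece of $\RR^n\subset\mathbb{C}^n$. In these coordinates $P$ is again a differential operator with holomorphic coefficients defined in a complex neighborhood of $\RR^n$, its restriction to $\RR^n$ therefore has real-analytic coefficients, and ellipticity of $P|_\Gamma$ is unchanged. Hence the pullback of $u$ solves an elliptic equation with real-analytic coefficients, and by the Morrey--Nirenberg analytic hypoellipticity theorem $u$ is real-analytic near $x_1$. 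In particular $u\in C^\infty(\Gamma)$ and, read in each straightening chart, $u$ is real-analytic.

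Closedness of $V$ is then immediate: if $x_1$ belongs to the closure of $V$ in $\Gamma$, choose a connected neighborhood $W$ of $x_1$ lying in a straightening chart, so that $u$ is real-analytic on $W$. As $x_1$ is a limit of points of $V$, the function $u$ vanishes on a nonempty open subset of $W$; since a real-analytic function on a connected open set that vanishes on a nonempty open subset vanishes identically, we get $u\equiv 0$ on $W$, i.e.\ $x_1\in V$. Thus $V$ is nonempty, open and closed in the connected set $\Gamma$, so $V=\Gamma$ and $u\equiv 0$.

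The only genuinely delicate point is the choice of regularity theorem: one must use analyticity of solutions, not merely smoothness, because weak unique continuation from an open set is not available in general for higher-order elliptic operators with only $C^\infty$ coefficients, whereas it is automatic once solutions are known to be real-analytic. The remaining ingredients --- the local straightening of a maximal totally real submanifold via the holomorphic implicit function theorem, and the propagation of the vanishing set --- are routine. Equivalently, one may package the conclusion as follows: $u$ extends to a holomorphic function $\widetilde U$ near $\Gamma$ solving $P\widetilde U=0$ there, its values on the totally real $\Gamma$ determining $\widetilde U$ uniquely, and $\widetilde U$ vanishing near $x_0$ forces $\widetilde U\equiv 0$ by the identity theorem, hence $u\equiv 0$.
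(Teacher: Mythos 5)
The step you need, and which you cannot have, is the claim that a maximal totally real $C^\infty$ submanifold of $\mathbb{C}^n$ can be locally carried onto a piece of $\mathbb{R}^n$ by a \emph{biholomorphism} of an ambient neighborhood. That property characterizes \emph{real-analytic} totally real submanifolds: if $\Phi$ is a local biholomorphism with $\Phi(\Gamma)\subset\mathbb{R}^n$, then $\Gamma=\Phi^{-1}(\mathbb{R}^n)$ is automatically real-analytic. In the application the contours $\Gamma_\theta$ are built from $f=\tfrac12 d(\cdot,\mathcal O)^2$ with $\partial\mathcal O$ only $C^\infty$, so $\Gamma_\theta$ is generically not real-analytic and no such straightening chart exists. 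Consequently, pulling $P|_\Gamma$ back by a smooth parametrization yields an operator whose coefficients are $C^\infty$ but not real-analytic (they are holomorphic functions composed with a merely $C^\infty$ map), and Morrey--Nirenberg analytic hypoellipticity does not apply. The open--closed argument itself is fine, but its engine --- local real-analyticity of $u$ obtained from a classical chart-by-chart regularity theorem --- is missing.

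Your final ``equivalent packaging,'' on the other hand, is essentially the argument of \cite{SZ4} that the paper cites: one shows that a distributional solution of $P_\Gamma u=0$, with $P$ having holomorphic coefficients in a full complex neighborhood of the totally real $\Gamma$ and $P|_\Gamma$ elliptic, extends to a holomorphic function $\widetilde U$ in a connected complex neighborhood of $\Gamma$; vanishing of $u$ on an open subset of the maximal totally real $\Gamma$ forces $\widetilde U\equiv0$ there by the identity theorem, and then everywhere by connectedness. The point to be aware of is that this holomorphic extension is precisely the substantive input --- a microlocal analytic regularity statement of FBI-transform type, not a one-chart consequence of Morrey--Nirenberg --- so the ``equivalently'' in your last sentence is doing more work than it admits. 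If you replace the straightening argument by a correct reference or proof of the holomorphic extension step, the rest of your write-up goes through.
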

\begin{cor}
Let $P,\Gamma$ be as in the proposition above, $\Omega_1\subset\subset\Omega_2\subset\subset\Omega_3\subset\Gamma$ are open sets, then there exists some constant $C>0$ such that for all $u\in H^m(\Omega_3)$,
\begin{equation}
\label{control}
\|u\|_{H^m(\Omega_2)}\leqslant C(\|Pu\|_{H^0(\Omega_3)}+\|u\|_{H^0(\Omega_3\setminus\Omega_1)}).
\end{equation}
Also if $P,\Gamma,\Omega_1,\Omega_2,\Omega_3$ depends continuously on some parameters varying in some compact set, then we have the estimate for some constant $C$ independent of the parameters.
\end{cor}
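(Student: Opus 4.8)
The plan is to deduce the quantitative estimate \eqref{control} from the qualitative strong uniqueness property in the preceding Proposition by a standard compactness–contradiction argument, combined with elliptic regularity on the totally real submanifold $\Gamma$. First I would fix the open sets $\Omega_1\subset\subset\Omega_2\subset\subset\Omega_3\subset\Gamma$ and suppose, for contradiction, that no such constant $C$ exists. Then there is a sequence $u_k\in H^m(\Omega_3)$ with $\|u_k\|_{H^m(\Omega_2)}=1$ but $\|Pu_k\|_{H^0(\Omega_3)}+\|u_k\|_{H^0(\Omega_3\setminus\Omega_1)}\to 0$. Shrinking $\Omega_3$ slightly (choosing an intermediate $\Omega_2\subset\subset\Omega_2'\subset\subset\Omega_3$), interior elliptic estimates for $P|_\Gamma$ — which is elliptic by hypothesis — give a bound $\|u_k\|_{H^m(\Omega_2')}\leqslant C(\|Pu_k\|_{H^0(\Omega_3)}+\|u_k\|_{H^0(\Omega_3)})$, so after extracting a subsequence $u_k\rightharpoonup u$ weakly in $H^m(\Omega_2')$ and, by Rellich, strongly in $H^{m-1}(\Omega_2')$ and in $H^0$ on relatively compact subsets.

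The limit $u$ then satisfies $Pu=0$ on $\Omega_2'$ (the $H^0$-limit of $Pu_k$ is zero), and $u=0$ on $\Omega_2'\setminus\Omega_1$ since $\|u_k\|_{H^0(\Omega_3\setminus\Omega_1)}\to0$. By the strong uniqueness Proposition applied on $\Omega_2'$ — $u$ solves the elliptic equation $P_\Gamma u=0$ and vanishes on the nonempty open set $\Omega_2'\setminus\Omega_1$, hence near some $x_0\in\Gamma$ — we conclude $u\equiv 0$ on $\Omega_2'$, in particular on $\Omega_2$. On the other hand, the strong $H^{m-1}(\Omega_2)$ convergence and the weak $H^m(\Omega_2)$ convergence, together with the normalization $\|u_k\|_{H^m(\Omega_2)}=1$, must be reconciled: the $H^m$ norm is the sum of the $H^{m-1}$ norm squared and the $L^2$ norms of the top-order derivatives. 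The top-order derivatives $\partial^\alpha u_k$ ($|\alpha|=m$) are controlled in $L^2$ via the elliptic equation: $\|u_k\|_{H^m(\Omega_2)}\leqslant C(\|Pu_k\|_{H^0(\Omega_2')}+\|u_k\|_{H^{m-1}(\Omega_2')})$, and both terms on the right tend to zero (the first by hypothesis, the second by the strong convergence to $u\equiv0$). This forces $\|u_k\|_{H^m(\Omega_2)}\to0$, contradicting the normalization. This contradiction establishes the existence of $C$.

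For the final assertion on uniform dependence on parameters, I would run the same argument but now allowing $P$, $\Gamma$, and the domains to vary in a compact parameter set: if the constant could not be chosen uniformly, one obtains a sequence of parameters $\tau_k$ (converging, after passing to a subsequence, to some $\tau_\infty$ by compactness) together with functions $u_k$ violating the estimate for $P^{(\tau_k)}$, $\Gamma^{(\tau_k)}$. The elliptic estimates are uniform in a neighborhood of $\tau_\infty$ by continuity of the coefficients, so the same weak/strong compactness produces a limit $u$ solving $P^{(\tau_\infty)}_{\Gamma^{(\tau_\infty)}}u=0$ and vanishing on an open set, whence $u\equiv0$ by the uniqueness Proposition applied at $\tau_\infty$, and one reaches the same contradiction. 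The main obstacle here is purely technical bookkeeping: one must pull the varying contours $\Gamma^{(\tau_k)}$ back to a fixed model domain (e.g. via the parametrization by $\RR^n\setminus\mathcal{O}$) so that the Sobolev norms and the weak-compactness arguments all take place on a single fixed space, and one must check that the constants in the interior elliptic estimates for $P^{(\tau)}|_{\Gamma^{(\tau)}}$ depend only on uniform bounds for finitely many derivatives of the (holomorphic) coefficients and on a uniform lower bound for the ellipticity constant — both of which follow from continuity on the compact parameter set.
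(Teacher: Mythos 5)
Your compactness–contradiction argument is the standard way to pass from the qualitative unique continuation property to the quantitative estimate \eqref{control}, and it is in substance what the paper's cited source \cite{SZ4} does (the paper itself does not give a proof, only the reference). The proof is correct modulo routine care: you should observe explicitly that $\|u_k\|_{H^0(\Omega_3)}$ stays bounded (which follows from $\|u_k\|_{H^m(\Omega_2)}=1$ and $\|u_k\|_{H^0(\Omega_3\setminus\Omega_1)}\to0$, using $\Omega_3\setminus\Omega_2\subset\Omega_3\setminus\Omega_1$) before invoking interior elliptic estimates and Rellich, and you need the intermediate domain $\Omega_2'$ to be connected with Lipschitz boundary so that unique continuation propagates from $\Omega_2'\setminus\Omega_1$ to all of $\Omega_2'$ and Rellich applies on $\Omega_2'$ itself.
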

This corollary shows that if $Pu=0$, then the part of $u$ in $\Omega_3\setminus\Omega_1$ controls the whole part of $u$ in $\Omega_2$. We shall apply this property to a family of intermediate contours between $\Gamma_\theta$ and $\RR^n\setminus\mathcal{O}$ and use the part in $\RR^n\setminus\mathcal{O}$ to control the part in $\Gamma_\theta$.

Now we describe our family of intermediate contours. To do this, we first blow up a neighborhood of $x_0$ by introducing the following change of variables:
\begin{equation*}
x\mapsto\tilde{x}, x=y+\epsilon\tilde{x}
\end{equation*}
where $y\in\partial\mathcal{O}$ is some boundary point near $x_0$, $\epsilon>0$ is a parameter which we will let tend to 0. We shall choose $\tilde{x}_0$ such that $|\tilde{x}_0|=1$ is close to the normal direction to boundary through $y$ and focus on the region $B(\tilde{x}_0,1)$ in the new coordinates.

The intermediate contours are constructed as follows: Let $\Gamma_{\theta,y,\epsilon}$ be the image of $\Gamma_\theta$ in the complexified $\tilde{x}$-space. Since $\Gamma_\theta$ is parametrized by $z=x+i\theta f'(x)$, we have the following parametrization of $\Gamma_{\theta,y,\epsilon}$:
\begin{equation*}
\tilde{z}=\tilde{x}+i\theta\epsilon^{-1}f'(y+\epsilon\tilde{x})=\tilde{x}+i\theta\partial_{\tilde{x}}f_{\epsilon,y}(\tilde{x})
\end{equation*}
where $f_{\epsilon,y}(\tilde{x})=\epsilon^{-2}f(y+\epsilon\tilde{x})$. The derivatives of $f_{\epsilon,y}$ can be estimated as follows:
\begin{equation*}
\partial_{\tilde{x}}f_{\epsilon,y}(\tilde{x})
=\left\{\begin{array}{ccl}
O(|\tilde{x}|^{2-|\alpha|}) & \mbox{if} & |\alpha|\leqslant2\\
O(\epsilon^{|\alpha|-2}) & \mbox{if} & |\alpha|\geqslant2\\
\end{array}\right..
\end{equation*}
We choose a cut-off function $\chi\in C_0^\infty(B(\tilde{x}_0,\frac{1}{2})),0\leqslant\chi\leqslant1$ and $\chi\equiv1$ on $B(\tilde{x}_0,\frac{1}{4})$. Out intermediate contours will be the image of
\begin{equation*}
\tilde{x}\mapsto\tilde{z}=\tilde{x}+i\theta\partial_{\tilde{x}}(\chi f_{\epsilon,y}(\tilde{x}))
\end{equation*}
and we will write $\Omega_0,\Omega_1,\Omega_2,\Omega_3$ to be the images of the balls $B(\tilde{x}_0,\frac{1}{4})$, $B(\tilde{x}_0,\frac{1}{2})$, $B(\tilde{x}_0,\frac{5}{8})$, $B(\tilde{x}_0,\frac{3}{4})$, respectively. See Figure 2(where we omit $\Omega_2$).

\begin{figure}
\label{Fig:cs}
\centering
\includegraphics[height=105mm, width=146mm]{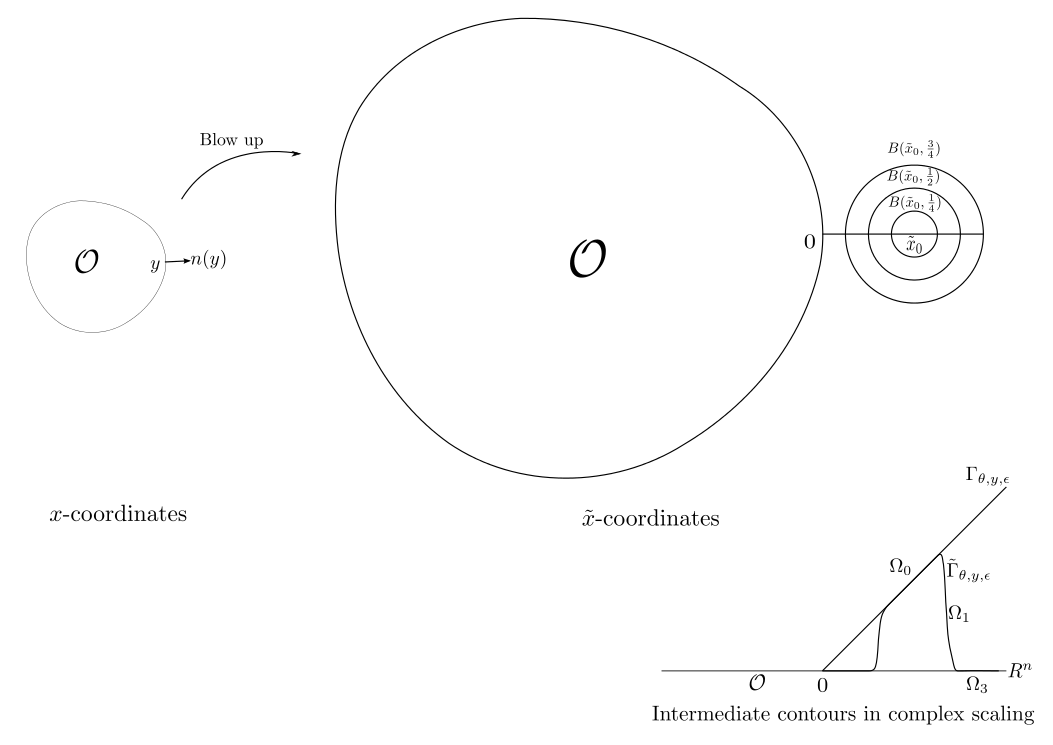}
\caption{Blow up and complex scaling contours. In the complex scaling picture, $\tilde{\Gamma}_{\theta,y,\epsilon}$ is the intermediate contour between $\mathbb{R}^n$ and $\Gamma_{\theta,y,\epsilon}$. On this contour, $\Omega_0,\Omega_1,\Omega_3$ are the images of the balls $B(\tilde{x}_0,\frac{1}{4})$, $B(\tilde{x}_0,\frac{1}{2})$, $B(\tilde{x}_0,\frac{3}{4})$, respectively. 
Therefore $\Omega_0\subset\Gamma_{\theta,y,\epsilon},\Omega_3\setminus\Omega_1\subset\mathbb{R}^n$.}
\end{figure}

By the strong uniqueness property of $(-\Delta_{\tilde{z}}-\epsilon^2\lambda^2)^{k_0}$ and \eqref{control}, we have the following estimates uniformly with respect to $y$ in a neighborhood of $x_0$ and $\epsilon$ small,
\begin{equation*}
\|v\|_{H^{2k_0}(\Omega_2)}\leqslant C(\|(-\Delta_{\tilde{z}}-\epsilon^2\lambda^2)^{k_0}\|_{H^0(\Omega_3)}+\|v\|_{H^0(\Omega_3\setminus\Omega_1)}).
\end{equation*}
We shall only use the following weak version:
\begin{equation}
\label{wcon}
\|v\|_{L^2(\Omega_1)}\leqslant C(\|(-\Delta_{\tilde{z}}-\epsilon^2\lambda^2)^{k_0}\|_{L^2(\Omega_3)}+\|v\|_{L^2(\Omega_3\setminus\Omega_1)}).
\end{equation}

Since in the $\tilde{x}$ coordinates, $\tilde{u}(\tilde{x})=u(x)$ satisfies the equation $(-\Delta_{\tilde{x}}-\epsilon^2\lambda^2)^{k_0}\tilde{u}=0$. By Lemma \ref{nonchd}, $\tilde{u}$ extends to a holomorphic solution $\tilde{U}$ of $(-\Delta_{\tilde{z}}-\epsilon^2\lambda^2)^{k_0}\tilde{U}=0$ over a neighborhood of a family of intermediate contours between $B(\tilde{x}_0,\frac{3}{4})$ and $\Gamma_{\theta,y,\epsilon}^0$ including $\Omega_2\subset\tilde{\Gamma}_{\theta,y,\epsilon}$. Back to the $x$-coordinates, we get the holomorphic extension $U$ of $u$ in $W\cap\Gamma_\theta^0$ if we let $y$ varies near $x_0$ and $\epsilon$ goes to 0.

Substitute $v=\tilde{U}_{\Omega_3}$ in \eqref{wcon}, noticing that $\Omega_3\setminus\Omega_1\subset\RR^n_{\tilde{x}}$, so $\tilde{U}=\tilde{u}$ on $\Omega_3\setminus\Omega_1$, we have
\begin{equation*}
\|\tilde{U}\|_{L^2(\Omega_1)}\leqslant C\|\tilde{u}\|_{L^2(\Omega_3\setminus\Omega_1)}.
\end{equation*}
Back to $x$-coordinates, we will have similar estimate for $U$ and $u$ with the same constant $C$ uniformly for all $y$ near $x_0$ and $\epsilon>0$. In particular, this shows that $u_\theta=U|_{\Gamma_\theta}$ is well-defined in $L^2$ near $x_0$. Since $u_\theta$ satisfies the non-characteristic equation $(-\Delta_z-\lambda^2)u_\theta=0$, if we identify $\Gamma_\theta$ with $\RR^n\setminus\mathcal{O}$ and use the normal geodesic coordinate $(x',x_n)$ (again, only locally near $x_0$), then $u_\theta\in C([0,\epsilon_0);\mathscr{D}'(\RR^{n-1}))$. In particular, it has a boundary value $u_\theta(x',0)\in\mathscr{D}'(\RR^{n-1})$. (For the proof of this, see e.g. \cite{Ho}.)

Now it only remains to show that $u_\theta(x',0)$ coincides with the original boundary value $\bar{u}(x')$. To do this, we substitute $v=\tilde{U}-\tilde{u}(0)$ in \eqref{wcon}, noticing that $\tilde{u}(0)=\bar{u}(y)$,
\begin{equation*}
(-\Delta_{\tilde{z}}-\epsilon^2\lambda^2)^{k_0}(\tilde{U}-\tilde{u}(0))=-(-\epsilon^2\lambda^2)^{k_0}\tilde{u}(0),
\end{equation*}
we have
\begin{equation*}
\|\tilde{U}-\tilde{u}(0)\|_{L^2(\Omega_1)}\leqslant C((\epsilon^2\lambda)^{k_0}|\tilde{u}(0)|\Vol(\Omega_3)^{\frac{1}{2}}+\|\tilde{u}-\tilde{u}(0)\|_{L^2(\Omega_3\setminus\Omega_1)}).
\end{equation*}
Since $\tilde{u}$ is smooth, the last term tends to 0 when $\epsilon\to0$. Therefore
\begin{equation*}
\Vol(\Omega_0)^{-1}\|\tilde{U}-\tilde{u}(0)\|^2_{L^2(\Omega_1)}=o(1), \epsilon\to0.
\end{equation*}
Back to $x$-space and use the normal geodesic coordinates $(x',x_n)$, we have
\begin{equation}
\label{cvtb}
\epsilon^{-n}\|u_\theta(x)-\bar{u}(x')\|^2_{B((x',\epsilon),\frac{\epsilon}{4})}=o(1), \epsilon\to0
\end{equation}
uniformly in $x'$. Now let $\chi_n\in C_0^\infty(\RR),\chi'(x')\in C_0^\infty(\RR^{n-1})$ be cut-off functions with support close to 1 and 0, respectively. Also let $\int\chi_n(x_n)dx_n=\int\chi'(x')dx'=1$. Then for any test function $\varphi\in C_0^\infty(\RR^{n-1})$,
\[ \begin{split}
& \langle u_\theta(x',0)-\bar{u}(x'),\varphi(x')\rangle=\lim_{\epsilon\to0}(u_\theta(x)-\bar{u}(x'))\epsilon^{-1}\chi_n(\epsilon^{-1}x_n)\varphi(x')dx\\
&  \ \ \ \
= \, \lim_{\epsilon\to0}\int\epsilon^{-n}\int(u_\theta(x)-\bar{u}(x'))\chi_n(\epsilon^{-1}x_n)
\chi'(\epsilon^{-1}(x'-y'))\varphi(x')dxdy'=0,
\end{split} \]
since $\epsilon^{-n}\int(u_\theta(x)-\bar{u}(x'))\chi_n(\epsilon^{-1}x_n)\chi'(\epsilon^{-1}(x'-y'))\varphi(x')dx$ has a uniformly compact support with respect to $y'$ and tends to zero uniformly in $y'$ by Cauchy-Schwarz inequality and \eqref{cvtb}.

To get the desired global deformation with suitable boundary condition, we only need to glue all the local deformation together using the strong uniqueness property.

For higher order derivatives, we can repeat the argument for every $\partial^\alpha u$ which satisfies the differential equation $(-\Delta-\lambda^2)^{k_0}(\partial^\alpha u)=0$ to get the holomorphic extension of $\partial^\alpha u$. The strong uniqueness property shows that this is exactly the derivative of the holomorphic extension of $u$.

\end{document}